\newtheorem{thm}{Theorem}[subsection]
\newtheorem{thm1}{Theorem}
\newtheorem{lemma}[thm]{Lemma}
\newtheorem{prop}[thm]{Proposition}
\newtheorem{cor}[thm]{Corollary}
\theoremstyle{definition}
\newtheorem{remark}[thm]{Remark}
\newtheorem{example}[thm]{Example}
\newtheorem{defn}[thm]{Definition}
\renewenvironment{proof}{{\flushleft \it Proof.}}{\hfill $\square$ \vspace{2mm}}
\DeclareMathOperator{\M}{\overline{M}}
\DeclareMathOperator{\Gr}{Gr}
\DeclareMathOperator{\E}{Eff}
\DeclareMathOperator{\LG}{LG}
\DeclareMathOperator{\IG}{IG}
\DeclareMathOperator{\OG}{OG}
\DeclareMathOperator{\Ker}{Ker}
\DeclareMathOperator{\HH}{H}
\DeclareMathOperator{\QH}{QH}
\DeclareMathOperator{\QHl}{QH(X)_{loc}}
\newcommand{\QHlh}[1]{\QH^{#1}(X)_{loc}}
\DeclareMathOperator{\QM}{QM}
\DeclareMathOperator{\BQH}{BQH}
\DeclareMathOperator{\codim}{codim}
\renewcommand{\P}{{\mathbb P}}
\newcommand{\C}{{\mathbb C}}
\newcommand{\R}{{\mathbb R}}
\newcommand{\Z}{{\mathbb Z}}
\newcommand{\htt}{{\textrm{ht}}}
\newcommand{\cO}{{\mathcal O}}
\newcommand{\pt}{{{\rm pt}}}
\newcommand{\cpt}{[\textrm{pt}]}
\newcommand{\id}{\text{id}}
\newcommand{\Bl}{\text{B$\ell$}}
\newcommand{\ev}{\operatorname{ev}}
\newcommand{\pic}[2]{\includegraphics[scale=#1]{#2}}
\newcommand{\be}{{\bf e}}
\newcommand{\ignore}[1]{}
\newcommand{\starz}{{\!\ \star_{0}\!\ }}
\def\bsigma0{\bar \sigma^0}
\def\pic{{\rm Pic}}
\def \Rh {{{\widehat{R}}}}
\def \a {{\alpha}}
\def \be {{\beta}}
\def \im {{\rm Im}}
\def\s{\sigma}\def\OO{\mathbb O}\def\PP{\mathbb P}
\renewcommand{\O}{\mathbb{O}}
\newcommand{\p}{\mathbb{P}}
\renewcommand{\a}{{\alpha}}
\newcommand{\rk}{{\rm rk}}
\newcommand{\scal}[1]{\langle #1 \rangle}
\newcommand{\g}{\mathfrak g}
\newcommand{\End}{{\rm End}}
\newcommand{\tr}{{\rm Tr}}
\newenvironment{proof-of-prop-}{{\flushleft \it Proof of Proposition \ref{prop-irr=>rc}.}}{\hfill $\square$ \vspace{2mm}}
\begin{document}

\title{Semisimple quantum cohomology\\
of some Fano varieties}

\date{25.04.2014}

\author{Nicolas Perrin}
\address{Math. 
Institut, Heinrich-Heine-Universit{\"a}t,
D-40204 D{\"u}sseldorf, Germany}
\email{perrin@math.uni-duesseldorf.de}

\subjclass[2000]{14N35, 14N15, 53D45}

\thanks{}

\begin{abstract}
We give sufficient conditions for the semisimplicity of quantum
cohomology of Fano varieties of Picard rank 1. We apply these
techniques to prove new semisimplicity results for some Fano varieties of
Picard rank 1 and large index. We also give examples of Fano varieties
having a non semisimple small quantum cohomology but a semisimple big
quantum cohomology. 
\end{abstract}

\maketitle

\markboth{N.~PERRIN}
{SEMISIMPLE QUANTUM COHOMOLOGY}

\section*{Introduction}

Since Dubrovin's conjecture \cite{dubrovin}, the question whether the
big quantum cohomology ring of a variety is semisimple is important
and has been discussed in many articles
\cite{manin,bayer-m,HMT,irr,ciolli,semisimple}. In particular
necessary conditions for semisimplicity are given by Hertling, Manin
and Teleman in \cite{HMT}. For a few varieties $X$, for example some
Fano threefolds \cite{ciolli}, toric varieties \cite{irr} or some
homogeneous spaces \cite{semisimple}, it was proved that the small
quantum cohomology ring $\QH(X)$ (see Subsection
\ref{subsection-small}) is semisimple. However for some homogeneous
spaces, it was proved in \cite{semisimple} and \cite{adjoint} that the
small quantum cohomology does not need to be semisimple.

In this paper we give sufficient conditions for the small quantum cohomology ring $\QH(X)$ of a smooth complex Fano variety $X$ to be semisimple. For $X$ such a Fano variety and $\starz$ the product in $\QH(X)$ (see Subsection \ref{subsection-small}), we define
$$Q_X(a,b) = \textrm{ sum of the coefficients of $q^k$ for some $k$ in the product $a \starz b$}.$$
This is a quadratic form defined on $\QH(X)$ and if $R(X)$ is the radical of $\QH(X)$ we prove (see Theorem \ref{thm-alg-qh})

\begin{thm1}
\label{main1}
Let $X$ be Fano with Picard number $1$ and $Q_X$ positive definite. Let $h$ the class of a generator of the Picard group.

1. Then $R(X) \subset \{a \in \QH(X) \ | \ h^k \starz a = 0 \textrm{ for some $k$} \}$.

2. If $h$ is invertible in $\QH(X)$, then $\QH(X)$ is semisimple.
\end{thm1}

In the second part of this paper we give examples of varieties whose quadratic form $Q_X$ is positive definite. In particular we obtain (See Theorem \ref{thm-def-pos} and Proposition \ref{prop-def-pos})

\begin{thm1}
\label{main2}
Let $X$ be a variety in the following list (See Subsection \ref{subsect-qh-comin})

\medskip

\centerline{\begin{tabular}{lllllll}
$\p^n$ & & $Q_n$ & & $\OG(5,10)$ & & $E_6/P_6$ \\
$\Gr(2,n)$ & & $\LG(3,6)$ & & $\OG(6,12)$ & & $E_7/P_7$ \\
  \end{tabular}}

\medskip

\noindent
or an adjoint variety (See Subsection \ref{subsection-adjoint}) and let $Y$ be a general linear section of codimension $k$ of $X$ with $2 c_1(Y) > \dim Y$. Then $Q_Y$ is positive definite.
\end{thm1}

In particular alsmost all Fano varieties of coindex 3 occur in the
above list (see Example \ref{sub-exqm}). In the third part we give a closer look at the product $h \starz -$ with the generator of the Picard group and obtain the following semisimplicity result (See Theorem \ref{thm-semisimple-Y}).

\begin{thm1}
\label{main3}
Let $Y$ be a general hyperplane section with $2 c_1(Y) > \dim Y$ of a homogeneous space in the following list
$$\begin{array}{lllllll}
\p^n & & Q_n & & \LG(3,6) & & F_4/P_1 \\
\Gr(2,2n+1) & & \OG(5,10) & & \OG(2,2n+1) & & G_2/P_1.\\
\end{array}$$
Then $\QH(Y)$ is semisimple.
\end{thm1}

In particular, this Theorem recovers in a uniform way semisimplicity results proved \cite{adjoint} and \cite{pech} and provides new semisimplicity results. 

In the last section we consider two cases where the small quantum cohomology is not semisimple and prove using Theorem \ref{main1} that the big quantum cohomology ring (denoted $\BQH(X)$, see Section \ref{sec-bqh}) is semisimple (see Theorem \ref{thm-bqh1} and Theorem \ref{thm-bqh2}).

\begin{thm1}
\label{main4}
Let $X = \IG(2,2n)$ or $X = F_4/P_4$. Then $\QH(X)$ is not semisimple but $\BQH(X)$ is semisimple.
\end{thm1}

This result was the starting point of this work which came from
discussions with A.~Mellit and M.~Smirnov. They obtain in \cite{SM}
together with S.~Galkin an independent proof of the semisimplicity of
$\BQH(Y)$ for $Y = \IG(2,6)$.

\medskip

Let us say few words on Dubrovin's conjecture. Recall that the first part of this conjecture states that for $X$ smooth projective, the semisimplicity of $\BQH(X)$ is equivalent to the existence of a full exceptional collection in $D^b(X)$, where $D^b(X)$ denotes the bounded derived category of coherent sheaves on $X$. 

For all homogeneous spaces $X$ appearing in Theorem \ref{main3} and
Theorem \ref{main4} except $F_4/P_1$ and $F_4/P_4$, it is known (see
\cite[Section 6]{kuz1} and \cite{kuz2}) that their derived category
admits a full exceptional collection proving Dubrovin's conjecture in
these cases. 

The same is true for a hyperplane section $Y = \IG(2,2n+1)$ of $X = \Gr(2,2n+1)$ recovering results of \cite{pech}.

Furthermore, for $X = \Gr(2,5)$, $X = \OG(5,10)$ or $X = \LG(3,6)$ the hyperplane sections $Y$ of $X$ with $2 c_1(Y) > \dim Y$ also have an exceptional collection (see \cite[Section 6]{kuz1}) proving Dubrovin's conjecture in these cases.

\medskip

\textbf{Acknowledgement.} I thank A.~Mellit and M.~Smirnov for enlightening discussions and email exchanges.
I also thank the organisers of the conference {\em Quantum cohomology and quantum K-theory} held in Paris in January 2014 where this work started. Finally I thank P.-E. Chaput for the program \cite{programme} which was of great use in many computations.

\setcounter{tocdepth}{1}
\tableofcontents

\section{Big quantum cohomology}
\label{sec-bqh}

In this section we recall few facts and fix notation for the quantum cohomology of a complex smooth projective variety $X$. We write $\HH(X)$ for $H^*(X,\R)$.

\subsection{Reminders} Let $X$ be a smooth projective variety, let $\E(X)$ be the cone of effective curves and let $\be \in \E(X)$. Denote by $\M_{0,n}(X,\be)$  the Kontsevich moduli space of genus $0$ stable maps to $X$ of degree $\be$ with $n$ marked points. This is a proper scheme and there are evaluation morphisms $\ev_i : \M_{0,n}(X,\be) \to X$ defined by evalutating the map at the $n$-th marked points. For $(\gamma_i)_{i \in [1,n]}$ cohomology classes on $X$, one defines the Gromov-Witten invariants as follows:
$$I_{0,n,\beta}({\gamma_1,\cdots,\gamma_n}) = \int_{[\M_{0,n}(X,\be)]^{\textrm{vir}}} 
\ev_1^*\gamma_1 \cup \cdots \cup \ev_n^* \gamma_n$$
where $[\M_{0,n}(X,\be)]^{\textrm{vir}}$ is the virtual fundamental class as defined in \cite{beh-fant}.
When $\gamma_i = \gamma$ for all $i \in [1,n]$, we write $I_{0,n,\be}({\gamma_1,\cdots,\gamma_n}) = I_{0,n,\be}({\gamma^n})$.

Let $N + 1 = \rk(\HH(X))$ and $r = \rk(\pic(X))$. Let $(e_i)_{i \in [0,N]}$ be a basis of $\HH(X)$ such that $e_0 = 1$ is the fundamental class and $(e_1,\cdots,e_r)$ is a basis of $H^2(X,\R)$. For $\gamma \in \HH(X)$, write $\gamma = \sum_i x_i e_i$ and define the \textbf{Gromov-Witten potential} by
$$\Phi(\gamma) = \sum_{n \geq 0} \sum_{\be \in \E(X)} \frac{1}{n!} I_{0,n,\be}({\gamma^n}).$$
This is an element in $R = \R[[(x_i)_{i \in [0 , N]}]]$. If $(\ ,\ )$ denotes the Poincar\'e pairing, then the \textbf{quantum product $\star$} is defined as follows 
$$(e_i \star e_j , e_k) = \frac{\partial^3 \Phi}{\partial x_i \partial x_j \partial x_k}(\gamma)$$
and extended by bilinearity to any other classes. This actually defines a family of products parametrised by $\HH(X)$. The main result of the theory states that these products are associative (see \cite{behrend,beh-man,man-kon}). 
We write $\BQH(X)$ for the algebra $(\HH(X) \otimes_\R R , \star)$.

\subsection{Virtual fundamental class}

For our computations of Gromov-Witten invariants we shall use the following general result on the virtual fundamental class for smooth projective varieties (this was proved in \cite{ruan-tian} according to \cite[Point (1.4)]{beauville}, we refer to \cite[Proposition 2]{lee-k-theo} for an algebraic proof).

\begin{prop}
\label{prop-class-virt}
Let $X$ be a smooth projective complex algebraic variety such that $\bar M_{g,n}(X,\beta)$ has the expected dimension, then the virtual class is the fundamental class.
\end{prop}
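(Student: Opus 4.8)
The plan is to reduce to the local description of the virtual fundamental class of Behrend--Fantechi (essentially reproducing the algebraic argument of \cite{lee-k-theo}) and to check that, when $M:=\bar M_{g,n}(X,\beta)$ has the expected dimension, the intrinsic normal cone fills up the whole obstruction bundle stack, so that the Gysin map defining $[M]^{\mathrm{vir}}$ returns the fundamental cycle $[M]$.

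First I would recall the set-up: $M$ carries a perfect obstruction theory $\phi\colon E^\bullet\to L_M^\bullet$ with $E^\bullet$ of perfect amplitude in $[-1,0]$, whose virtual dimension $\mathrm{rk}\,E^0-\mathrm{rk}\,E^{-1}$ equals the expected dimension $d$, and by definition $[M]^{\mathrm{vir}}=0^!_{\mathfrak E}[\mathfrak C_M]$, where $\mathfrak C_M\hookrightarrow\mathfrak E:=h^1/h^0\big((E^\bullet)^\vee\big)$ is the intrinsic normal cone sitting in the obstruction bundle stack and $0^!_{\mathfrak E}$ is intersection with the zero section. Since both $\mathfrak C_M$ and $0^!_{\mathfrak E}$ are étale-local on $M$, I would pass to a local presentation: étale-locally $M$ is the zero scheme $Z(s)$ of a section $s$ of a vector bundle $F$ on a smooth scheme $V$, with $E^\bullet$ represented by $[F^\vee\to\Omega_V]|_M$; then $r:=\mathrm{rk}\,F=\dim V-d$ and, locally, $[M]^{\mathrm{vir}}=0^!_F[C_{M/V}]$, intersection with the zero section of $C_{M/V}\subseteq F|_M$. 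From this local picture one also sees that every component of $M$ has dimension $\ge\dim V-r=d$, so the hypothesis means $M$ is purely of dimension $d$.

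The key step is then the equality of cycles $[C_{M/V}]=[F|_M]$ on the total space of $F|_M$. The cone $C_{M/V}$ is pure of dimension $\dim V=d+r=\dim(F|_M)$ and dominates $M$; counting dimensions in the fibres of $F|_M\to M$ shows that each component of $C_{M/V}$ dominates a component $M_j$ of $M$ and, carrying the full fibre dimension $r$, must be all of the sub-bundle $F|_{M_j}$. For the multiplicities, at the generic point $\eta_j$ of $M_j$ the ideal of $M$ in the regular local ring $\mathcal O_{V,\eta_j}$ (which has dimension $r$) is $\mathfrak m$-primary and generated by the $r$ components of $s$; hence these form a system of parameters, so a regular sequence, and therefore the Hilbert--Samuel multiplicity of this ideal equals $\ell(\mathcal O_{M,\eta_j})$, i.e. $F|_{M_j}$ occurs in $[C_{M/V}]$ with the multiplicity of $M_j$ in $M$. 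This yields $[C_{M/V}]=[F|_M]$.

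To conclude, $0^!_F[F|_M]=[M]$, since intersecting the fundamental cycle of the total space of a vector bundle with the zero section returns the fundamental cycle of the base ($s^!\pi^*=\mathrm{id}$); assembling the local computations — equivalently, using that the globally defined Behrend--Fantechi class agrees with $0^!_F[C_{M/V}]$ in each chart — gives $[M]^{\mathrm{vir}}=[M]$. The only step requiring genuine care is the multiplicity computation at the generic points of $M$; the rest is dimension bookkeeping forced by the equality $\dim M=d$. (If one only wants the statement for $M$ generically reduced, that step collapses to the remark that over the smooth locus $C_{M/V}=N_{M/V}=F|_M$.)
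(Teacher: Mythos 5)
Your argument is correct and reproduces the algebraic proof that the paper merely cites: the paper gives no proof of its own, referring to Ruan--Tian (via Beauville) and to Lee's Proposition~2 for the algebraic version you carry out. A slight streamlining of the multiplicity step: once the components of $s$ are seen to form a regular sequence in $\mathcal{O}_{V,\eta_j}$, the associated graded ring $\mathrm{gr}_I(\mathcal{O}_{V,\eta_j})$ is a polynomial ring over $\mathcal{O}_{M,\eta_j}$, so $C_{M/V}$ coincides with $F|_M$ as a \emph{scheme} near the generic point of $F|_{M_j}$, and the equality of cycle multiplicities is then immediate, with no need to invoke the identity between Hilbert--Samuel multiplicity and colength.
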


\subsection{Divisor axiom}
One very useful property of Gromov-Witten invariants is that for degree $2$ cohomology classes, they can be easily computed. Indeed we have the following \textbf{divisor axiom}. For $\gamma_1 \in H^2(X,\R)$, we have
$$I_{0,n,\be}({\gamma_1,\cdots,\gamma_n}) = \left( \int_\be \gamma_1 \right) I_{0,n-1,\be}({\gamma_2,\cdots,\gamma_n}).$$
This gives a simplification of the potential (modulo terms of degree lower than 2):
$$\Phi(\gamma) = \gamma \cup \gamma \cup \gamma + \sum_{n \geq 0} \sum_{\be \in \E(X)} \frac{1}{n!} I_{0,n,\be}({\bar \gamma^n}) q^\be$$
where $\bar \gamma$ is the projection of $\gamma$ on the span of $(e_i)_{i \in [r+1,N]}$ and $q^\be = q_1^{d_1} \cdots q_r^{d_r}$ with $d_i = \int_\be e_i$ and $q_i = e^{x_i}$ for $i \in [1,r]$. Writing $\bar \gamma = \sum_{i = r + 1}^N x_i e_i$ we get
$$\Phi(\gamma) = \gamma \cup \gamma \cup \gamma + \sum_{n \geq 0} \sum_{\be \in \E(X)} \sum_{n_{r + 1} + \cdots + n_N = n}\frac{x_{r + 1}^{n_{ r + 1}} \cdots x_N^{n_N}}{n_{r + 1}! \cdots n_N!} I_{0,n,\be}({e_{r + 1}^{n_{r + 1}} , \cdots , e_N^{n_N}}) q^\be.$$

\subsection{Small quantum product}
\label{subsection-small}
A very classical special product called the small quantum product and denoted by $\star_0$ in this paper is obtained as follows
$$(e_i \star_0 e_j , e_k) = \frac{\partial^3 \Phi}{\partial x_i \partial x_j \partial x_k}(\gamma)\vert_{\bar \gamma = 0}.$$
This is also a family of associative products parametrised by $H^2(X,\R)$. This product is easier to compute and only involves $3$-points Gromov-Witten invariants \emph{i.e.} Gromov-Witten invariants with $n = 3$.  Set $R_0 = \R[[(q_i)_{i \in [1 , r]}]]$. We write $\QH(X)$ for the algebra $(\HH(X) \otimes_\R R_0 , \star_0)$. 
Note that $R_0 = R/((x_i)_{i \in [r + 1,N]})$. 

\subsection{Deformation in the $\tau$-direction}
\label{sub-def}

Let $\tau \in \HH(X)$ and choose the basis $(e_i)_{i \in [0,N]}$ so that $e_{r + 1} = \tau$. For $\gamma \in \HH(X)$, write $\hat \gamma$ for its projection in the span of $(e_i)_{i \in [r + 2,N]}$. We define the product $\star_\tau$ as follows:
$$(e_i \star_\tau e_j , e_k) = \frac{\partial^3 \Phi}{\partial x_i \partial x_j \partial x_k}(\gamma)\vert_{\hat \gamma = 0}.$$
This is also a family of associative products parametrised by $H^2(X,\R) \oplus \R \tau$. Set $R_\tau = \R[[(q_i)_{i \in [1 , r]},x_{r + 1}]]$. We write $\BQH_\tau(X)$ for the algebra $(\HH(X) \otimes_\R R_\tau , \star_\tau)$. Note that $R_\tau = R/((x_i)_{i \in [r + 2,N]})$. 
Let us describe a general product in this algebra (we set $t = x_{r + 1}$):
$$(e_i \star_\tau e_j , e_k) = (e_i \star_0 e_j , e_k) + t \sum_{\be \in \E(X)} I_{0,4,\be}({e_i,e_j,e_k,e_{r + 1}}) q^\be + O(t^2).$$
In particular when $e_i$ is the class of a divisor this product takes a simple form. Denote by $\Psi_i$ the endomorphism of $\R[[(q_i)_{i \in [1,r]}]]$ defined by
$$\Psi_i \left( \sum_{\be \in \E(X)} z_\be q^\be \right) = \sum_{\be
  \in \E(X)} d_i z_\be q^\be$$
with $d_i = \int_\be e_i$ and extend $\Psi_i$ by linearity on 
$\QH(X)$ via its actions on the scalars. We get the formula
$e_i \star_\tau e_j = e_i \star_0 e_j + t \Psi_i(e_{r+1}
\star_0 e_k) + O(t^2).$

\section{localisation of the radical}
\label{section-rad}

In this section, we prove that the existence of positive definite hermitian or real forms imply semisimplicity or regularity results on finite dimensional commutative algebras. Let $A$ be a finite dimensional commutative $\C$-algebra with $1$. We write $R(A)$ for the radical of $A$. For $a \in A$, we write $E_a \in \End_A(A)$ for the endomorphism obtained by multiplication with $a$.

\subsection{Semisimplicity and inner product}

We first relate the semisimplicity of $A$ with the existence of an inner product.

\begin{prop}
\label{prop-invol}
The algebra $A$ is semisimple if and only if there exists an algebra involution $a \mapsto \bar a$ and a linear form $\varphi : A \to \C$ with $\varphi(1) = 1$ such that the bilinear form defined by $Q(a,b) = \varphi(a \bar b)$ is an inner product.
\end{prop}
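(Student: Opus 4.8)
The plan is to prove both directions separately, using the standard structure theory of finite-dimensional commutative $\C$-algebras.

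For the forward direction, suppose $A$ is semisimple. Then by Wedderburn (in the commutative case over $\C$), $A \cong \C^m$ as a $\C$-algebra, i.e. $A \cong \C e_1 \times \cdots \times \C e_m$ where the $e_i$ are orthogonal idempotents summing to $1$. I would define the involution $a \mapsto \bar a$ to be complex conjugation on each factor: if $a = \sum_i \lambda_i e_i$ then $\bar a = \sum_i \bar\lambda_i e_i$. This is clearly an algebra automorphism of order $2$. Then I would set $\varphi\left(\sum_i \lambda_i e_i\right) = \sum_i \lambda_i$ (the sum of the coordinates), which satisfies $\varphi(1) = m$... wait, this needs $\varphi(1)=1$, so instead take $\varphi\left(\sum_i \lambda_i e_i\right) = \frac{1}{m}\sum_i \lambda_i$, or more flexibly $\varphi\left(\sum_i \lambda_i e_i\right) = \sum_i c_i \lambda_i$ for any positive reals $c_i$ with $\sum_i c_i = 1$. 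Then $Q(a,b) = \varphi(a\bar b) = \sum_i c_i \lambda_i \bar\mu_i$ where $b = \sum_i \mu_i e_i$, which is manifestly a positive definite Hermitian form (an inner product), since the $c_i > 0$. So this direction is essentially a direct construction.

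For the converse, suppose such an involution and $\varphi$ exist with $Q(a,b) = \varphi(a\bar b)$ an inner product. I want to show $R(A) = 0$. The key observation is that $Q$ is \emph{invariant} in the sense that $Q(ac, b) = \varphi(ac\bar b) = \varphi(a\overline{\bar c b}) = Q(a, \bar c b)$, i.e. the adjoint of $E_c$ with respect to $Q$ is $E_{\bar c}$. Now take any $a \in R(A)$. The radical is a nilpotent ideal (finite-dimensional), so $a$ is nilpotent, say $a^k = 0$ with $a^{k-1} \neq 0$ minimal (if $a \neq 0$). Since the involution is an algebra map, $\bar a$ is also nilpotent, hence $a\bar a$ is nilpotent. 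The cleanest route: consider the element $b = a^{\lceil k/2 \rceil}$ adjusted so that $b \neq 0$ but $b\bar{b}$ or rather $b \cdot (\text{something})$ vanishes — actually the slickest argument is: if $a \in R(A)$, $a\neq 0$, pick $j \geq 1$ maximal with $a^j \neq 0$; then $Q(a^j, a^j) = \varphi(a^j \overline{a^j}) = \varphi(a^j \bar a^j)$. Hmm, $a^j \bar a^j$ need not be zero. Let me instead use: $R(A)$ is an ideal, so $E_a$ restricted appropriately is nilpotent; the point is that $Q(x,y) = \varphi(x \bar y)$ restricted to the $\bar{\ }$-stable... Actually the standard argument: suppose $a \in R(A)$ with $a \neq 0$. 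Since $R(A)$ is nilpotent and $\bar{\ }$ preserves it, choose $n$ maximal such that the product $a \bar a a \bar a \cdots$ ($n$ factors alternating, or just $(a\bar a)^n$) is nonzero; since $a \bar a \in R(A)$ it is nilpotent, so some power $(a\bar a)^m = 0$ with $(a\bar a)^{m-1} \neq 0$ (taking $m \geq 1$ minimal with $(a\bar a)^m = 0$; note $(a\bar a)^0 = 1 \neq 0$). Let $c = (a\bar a)^{\lceil m/2 \rceil - 1} \cdot a$ or similar — choose a power so that $Q(c,c) = \varphi(c\bar c)$ involves $(a \bar a)^m = 0$. Concretely: let $m \geq 1$ be minimal with $(a\bar a)^m = 0$ and set $c = (a \bar a)^{m-1}$, which is nonzero and $\bar c = (\bar a a)^{m-1} = (a\bar a)^{m-1} = c$ (commutativity), so $Q(c,c) = \varphi(c\bar c) = \varphi((a\bar a)^{2(m-1)})$. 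If $m \geq 2$ then $2(m-1) \geq m$ so $(a\bar a)^{2(m-1)} = 0$, giving $Q(c,c) = 0$ with $c \neq 0$, contradicting positive definiteness. If $m = 1$, then $a\bar a = 0$ already, so take $c = a$: $Q(a,a) = \varphi(a\bar a) = \varphi(0) = 0$ with $a \neq 0$, again a contradiction. Hence $R(A) = 0$ and $A$ is semisimple.

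The main obstacle is the converse, and specifically getting the nilpotency bookkeeping exactly right so that an evaluation of $Q$ on a \emph{single} nonzero element of the radical produces $0$; the trick above — taking $c = (a\bar a)^{m-1}$ which is self-conjugate by commutativity — handles both the $m=1$ and $m \geq 2$ cases uniformly and is the crux. One should also record at the outset the two standard facts being used: that a finite-dimensional commutative $\C$-algebra is semisimple iff its radical (= nilradical, the set of nilpotent elements, which is an ideal) vanishes, and that the adjoint of $E_c$ with respect to $Q$ is $E_{\bar c}$, though in fact the argument above only uses the multiplicativity of $\varphi(\cdot\,\bar{\cdot})$ through the involution property and does not strictly need the adjoint reformulation.
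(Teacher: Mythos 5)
Your forward direction is essentially the paper's construction (the paper takes $\varphi = n^{-1}\tr(E_a)$, which is your weighted sum with equal weights, and the same conjugation-in-the-idempotent-basis involution), so there is no substantive difference there. Your converse, however, takes a genuinely different route. The paper observes that $Q(E_a b, c) = Q(b, E_{\bar a} c)$, so each $E_a$ is a \emph{normal} operator (its adjoint is $E_{\bar a}$, which commutes with it by commutativity of $A$), and then invokes the spectral theorem for a commuting family of normal operators to produce an orthogonal eigenbasis of idempotents, hence semisimplicity. You instead argue directly on a hypothetical nonzero nilpotent $a\in R(A)$: since $a\bar a$ is a self-conjugate nilpotent element (self-conjugacy using commutativity and that the involution is an algebra map), taking $m\geq 1$ minimal with $(a\bar a)^m=0$ gives a nonzero element $c$ with $\bar c = c$ and $c^2=0$ (or, in the $m=1$ case, $a$ itself works), forcing $Q(c,c)=\varphi(c\bar c)=0$, contradicting positive definiteness. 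Both arguments are correct. Yours is more elementary and self-contained — it uses only the positivity of $Q$ on a single element and never needs simultaneous diagonalization — while the paper's normality argument produces more structure as a byproduct (the explicit orthogonal idempotent decomposition), which the paper in fact reuses in the proof of Proposition \ref{main-prop}. One small point worth making explicit in a final write-up: for $Q(a,b)=\varphi(a\bar b)$ to be a Hermitian inner product rather than a bilinear form, the algebra involution must be taken conjugate-linear; you use this implicitly in the forward direction (writing $\bar\lambda_i$) and it is consistent with the paper's ``antilinear map with $\bar e_i = e_i$.''
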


\begin{proof}
Let $n = \dim A$. Assume that $A$ is semisimple, then for $a \in A$, the endomorphism $E_a \in \End_A(A)$ is semisimple. Since $A$ is commutative, the endomorphisms $(E_a)_{a \in A}$ are simultaneously diagonalisable in a basis $(e_i)_{i \in [1,n]}$. These elements are orthogonal idempotents whose sum is 1. Define the linear form $\varphi$ by $\varphi(a) = n^{-1} \tr(E_a)$ and the involution $a \mapsto \bar a$ as the unique antilinear map with $\bar e_i = e_i$. This defines the desired inner product.

Conversely, assume that such an algebra involution and linear form exist. Then the endomorphisms $E_a$ are normal for $Q$: we have $Q(E_a(b),c) = Q(ab,c) = \varphi(ab \bar c) = Q(b,\bar a c) = Q(b,E_{\bar a}(c))$. The adjoint of $E_a$ is $E_{\bar a}$ and they commute. In particular the endomorphisms $(E_a)_{a \in A}$ are simultaneously diagonalisable in a basis $(e_i)_{i \in [1,n]}$ and these elements are orthogonal idempotents whose sum is 1.
\end{proof}

This result was first motivated by the following example.

\begin{example}
\label{exam-comin}
We refer to Subsection \ref{subsect-qh-comin} for results on quantum cohomology of cominuscule homogeneous spaces. Let $X$ be a cominuscule homogeneous space and let $A(X) = \QH(X)_{q=1}$ be its small quantum cohomology with product $\starz$ and with quantum parameter equal to 1. Let $\pt$ the the cohomology class of a point and let $\s_u$ be a Schubert class. Consider $\textrm{PD}(\pt \starz \s_u)$ where $\textrm{PD}$ stands for Poincar\'e duality. It was proved in \cite{strange} and \cite{semisimple}, that this class is a Schubert class $\s_{\bar u}$ and that $\s_u \mapsto \s_{\bar u}$ defines an algebra involution. Define furthermore a linear form $\varphi$ by $\varphi(\s_u) = \delta_{u,1}$ on the Schubert basis (recall that $1 = \s_1$). One easily checks that $Q(\s_u, \s_{\bar v}) = \delta_{u,v}$ proving that $Q$ is an inner product. We recover this way a result of \cite{semisimple} relating the semisimplicity with the existence of an algebra involution. 
\end{example}

\subsection{Radical and positive definite forms}

One of the major problems for applying the above result is that the algebra involution is not \emph{a priori} given and is usually hard to produce (for an example see \cite[Remark 6.6]{adjoint}). In this section we furthermore assume that $A$ is a $\R$-algebra which is $\Z/c_1\Z$-graded and denote by $A_k$ the graded piece of degree $k$.

\begin{defn}
Let $E \in \End(A)$. We set $A_\lambda(E) = \{ a \in A \ | \ (E - \lambda \id_A)^n(a) = 0 \textrm{ for $n$ large} \}$ and $m_E(\lambda) = \dim A_\lambda(E)$.
\end{defn}

\begin{lemma}
\label{prop-alg}
Let $E \in \End_A(A)$ with real eigenvalues. We have 
$$R(A) \subset \bigoplus_{m_E(\lambda) > 1} A_\lambda(E).$$
In particular if $E$ is semisimple regular, we have $R(A) = 0$.
\end{lemma}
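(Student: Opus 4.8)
The plan is to exploit the interplay between the multiplication operators $E_a$ and a fixed operator $E \in \End_A(A)$ with real eigenvalues. First I would recall the standard fact that for a finite dimensional commutative algebra $A$, the radical $R(A)$ consists precisely of the nilpotent elements, and that $R(A)$ is the intersection of all maximal ideals; equivalently, the quotient $A/R(A)$ is a product of fields (here, since we are over $\C$ or $\R$, a product of copies of $\C$ or $\R$). So $A$ decomposes, via the primary decomposition of the identity, as $A = \bigoplus_{i} A_i$ where each $A_i$ is a local algebra with residue field $\C$ (resp. $\R$), and $R(A) = \bigoplus_i R(A_i)$ with $R(A_i)$ the maximal ideal of $A_i$.

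Next I would analyze how $E$ sits with respect to this decomposition. Since $E$ is $A$-linear, $E = E_a$ for the element $a = E(1) \in A$; indeed $E(b) = E(b\cdot 1) = b E(1) = ab$. On each local factor $A_i$, write $a = \la_i + r_i$ with $\la_i \in \C$ (resp. $\R$) the scalar part and $r_i \in R(A_i)$ nilpotent. Then on $A_i$ the operator $E_a - \la_i \id$ is multiplication by the nilpotent element $r_i$, hence is nilpotent on $A_i$; therefore $A_i \subseteq A_{\la_i}(E_a)$, and in fact $A_{\la}(E_a) = \bigoplus_{\la_i = \la} A_i$. The hypothesis that $E$ has real eigenvalues guarantees each $\la_i$ is real, which matters in the $\R$-algebra setting so these generalized eigenspaces are genuinely defined over $\R$. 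Now observe that $A_{\la_i}(E_a) = A_i$ exactly when $\la_i$ occurs only once among the $\la_j$; more generally if $m_{E_a}(\la) = \dim A_{\la}(E_a) = \sum_{\la_j = \la} \dim A_j$, then whenever $\dim A_j = 1$ (i.e. $A_j$ is a field, so $R(A_j) = 0$) and $\la_j$ is attained uniquely, the summand contributes nothing to the radical and $m_{E_a}(\la_j) = 1$.

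Putting this together: $R(A) = \bigoplus_i R(A_i)$, and $R(A_i) = 0$ whenever $\dim A_i = 1$. If $\dim A_i > 1$ then $A_i \subseteq A_{\la_i}(E_a)$ with $m_{E_a}(\la_i) \geq \dim A_i > 1$. Hence every nonzero contribution to $R(A)$ lies inside some $A_{\la}(E_a)$ with $m_{E_a}(\la) > 1$, giving $R(A) \subseteq \bigoplus_{m_{E_a}(\la) > 1} A_{\la}(E_a)$, which is the claim since $E = E_a$. For the "in particular" statement: if $E$ is semisimple, then multiplication by $r_i$ on $A_i$ is both nilpotent and semisimple, hence zero, so $r_i = 0$ in $A_i$ (as $A_i$ has a unit) for every $i$, forcing every local factor to be a field; if moreover $E$ is regular, all the $\la_i$ are distinct, each $m_E(\la_i) = 1$, and the displayed sum is empty, so $R(A) = 0$.

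The one point requiring a little care — and the main conceptual step rather than an obstacle — is the identification $E = E_a$ with $a = E(1)$, which uses $E \in \End_A(A)$ (not merely $E \in \End(A)$); this is why the lemma is stated for $A$-linear $E$. Everything else is the structure theory of finite dimensional commutative algebras applied factor by factor. I do not expect a serious difficulty here; the argument is short once the primary decomposition is in place.
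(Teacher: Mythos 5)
Your proof is correct but routes through different machinery than the paper's. You invoke the structure theorem for finite-dimensional commutative algebras to decompose $A = \bigoplus_i A_i$ into local factors, identify $E = E_a$ with $a = E(1)$, split $a = \lambda_i + r_i$ on each factor, and then show $A_i \subseteq A_{\lambda_i}(E)$ so that $R(A_i) \neq 0$ forces $m_E(\lambda_i) \geq \dim A_i > 1$. The paper avoids the structure theory entirely: it works directly with the coarser generalized eigenspace decomposition $A = \bigoplus_\lambda A_\lambda(E)$, uses $A$-linearity to deduce $A_\lambda(E)\cdot A_\mu(E) = 0$ for $\lambda \neq \mu$ (so each $A_\lambda(E)$ is an ideal with its own idempotent identity $1_\lambda$), and then observes that when $m_E(\lambda) = 1$ the space $A_\lambda(E) = \C\,1_\lambda$ contains no nonzero nilpotents since $1_\lambda$ is a nonzero idempotent. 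Your explicit observation $E = E_{E(1)}$ is a nice way to make the $A$-linearity hypothesis do its work, and is essentially what the paper packages as the orthogonality of the eigenspaces. The paper's argument is more self-contained (just linear algebra plus the definition of the radical), while yours exposes the finer local decomposition, which is more than the lemma needs but arguably more transparent about where the radical lives.
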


\begin{remark}
Note that we assume here that $E$ is $A$-linear. This is for example the case for $E = E_a$ with $a \in A$.
\end{remark}

\begin{proof}
We have a decomposition 
$A = \bigoplus_\lambda A_\lambda(E).$
Furthermore by $A$-linearity, we have $A_\lambda(E) \cdot A_\mu(E) \subset A_\lambda(E) \cap A_\mu(E) = 0$ for $\lambda \neq \mu$. Let $a \in R(A)$ and write $a = \sum_\lambda a_\lambda$ with $a_\lambda \in A_\lambda(E)$. Then $a_\lambda$ is nilpotent. Write $1 = \sum_\lambda 1_\lambda$ with $1_\lambda \in A_\lambda(E)$. Then $1_\lambda$ is an idempotent and for $m_E(\lambda) = 1$ we have $a_\lambda = x_\lambda 1_\lambda$ with $x_\lambda \in \C$. Since $a_\lambda$ is nilpotent we have $x_\lambda = 0$ and the result follows.
\end{proof}

\begin{prop}
\label{main-prop}
Assume that there exists $\varphi_0 : A_0 \to \C$ a linear form with $\varphi_0(1) = 1$ such that the bilinear form defined by $Q_0(a,b) = \varphi(a b)$ for $a,b \in A_0$ is positive definite.

1. Then $R(A) \subset A_0(E_a)$ for all $a \in A_1$.

2. If there exists $a \in A_1$ with $\Ker E_a = 0$, then $A$ is semisimple.

3. If there exists $a \in A_1$ with $A_0 = A_0 \cap \Ker E_a \oplus A_0 \cap \im E_a$, then $R(A) \subset \Ker E_a$, $A = \im E_a \oplus \Ker E_a$ and the subalgebra generated by $a$ is semisimple.
\end{prop}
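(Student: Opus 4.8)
# Proof Proposal for Proposition \ref{main-prop}

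The plan is to combine the classical fact that a positive definite ``trace form'' makes multiplication operators self-adjoint with a Fitting-type decomposition along $E_a$ that respects the $\Z/c_1\Z$-grading. First I would record that the hypothesis already forces $A_0$ to be semisimple: for $x\in A_0$ the operator $E_x|_{A_0}$ is self-adjoint for $Q_0$ because $Q_0(xy,z)=\varphi_0(xyz)=Q_0(y,xz)$, so the commuting family $\{E_x|_{A_0}\}_{x\in A_0}$ consists of operators self-adjoint for a positive definite form and is therefore simultaneously diagonalisable; hence $A_0$ embeds into a product of fields and $R(A_0)=0$. The same computation shows that any subalgebra of $A$ contained in $A_0$ is semisimple. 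Finally, for $a\in A_1$ the operator $E_a$ is homogeneous of degree $1$, so every $\Ker E_a^{\,n}$ and every $\im E_a^{\,n}$ is a graded subspace.

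For $(1)$, fix $a\in A_1$ and take the Fitting decomposition $A=A_0(E_a)\oplus B$ with $B=\im E_a^{\,N}=\bigcap_n\im E_a^{\,n}$ for $N\gg 0$. Both summands are ideals ($A_0(E_a)=\{x:a^{N}x=0\}$ and $B=a^{N}A$) and, by the last paragraph, graded; hence $A\cong A_0(E_a)\times B$ as rings, and since the idempotents $1_{A_0(E_a)},1_B$ have all their components of nonzero degree in $A_0(E_a)\cap B=0$, they lie in $A_0$. As the radical of a product is the product of the radicals and $R(A_0(E_a))\subset A_0(E_a)$, it is enough to prove that $B$ is semisimple. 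On $B$ the element $a$ is invertible ($E_a$ maps $B$ onto $B$) and homogeneous of degree $1$, which forces $B_k=a^kB_0$ and $B\cong B_0[a]/(a^{c_1}-b)$ with $b:=a^{c_1}\in B_0^{\times}$. Here $B_0=B\cap A_0$ is a subalgebra of $A$ contained in $A_0$, hence semisimple, say $B_0\cong\prod_s K_s$ with the $K_s$ fields of characteristic zero; then $B\cong\prod_s K_s[a]/(a^{c_1}-b_s)$ with $b_s\neq 0$, and $X^{c_1}-b_s$ is separable (it is prime to its derivative $c_1X^{c_1-1}$), so each factor, hence $B$, is reduced. Thus $R(A)=R(A_0(E_a))\subset A_0(E_a)$.

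Assertion $(2)$ is then immediate: $\Ker E_a=0$ makes $E_a$ bijective, so $a$ is a unit of $A$, $A_0(E_a)=0$, and $R(A)=0$ by $(1)$. For $(3)$, decompose $a=a'+a''$ along $A\cong A_0(E_a)\times B$, where $a'$ is nilpotent in $C:=A_0(E_a)$ and homogeneous of degree $1$, where $a''$ is invertible in $B$, and where $1_C\in C_0:=C\cap A_0$. Using that $A=C\oplus B$ is graded and that $E_a$ is injective on $B$, one checks that $A_0\cap\Ker E_a=\Ker(E_{a'}|_{C_0})$ and $A_0\cap\im E_a=a'C_{c_1-1}\oplus(B\cap A_0)$, so that, after cancelling the common summand $B\cap A_0$ of $A_0=C_0\oplus(B\cap A_0)$, the hypothesis becomes $C_0=\Ker(E_{a'}|_{C_0})\oplus a'C_{c_1-1}$. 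The core of the proof is now a descending induction showing $a'=0$: if $(a')^{m+1}=0$ for some $m\ge 1$, then $a'C_{c_1-1}\subset\Ker(E_{a'}^{\,m}|_{C_0})$ (because $(a')^{m}(a'C_{c_1-1})=(a')^{m+1}C_{c_1-1}=0$) and also $\Ker(E_{a'}|_{C_0})\subset\Ker(E_{a'}^{\,m}|_{C_0})$, so the above splitting gives $C_0=\Ker(E_{a'}^{\,m}|_{C_0})$, i.e. $(a')^{m}=(a')^{m}1_C=0$; starting from the nilpotency of $a'$, this descends to $a'=0$. It follows that $A_0(E_a)=\Ker E_a$ (so $R(A)\subset\Ker E_a$ by $(1)$), that $\im E_a=a''B=B$ and hence $A=\im E_a\oplus\Ker E_a$, and that $E_a$ is zero on $A_0(E_a)$ and semisimple on the semisimple algebra $B$, so the minimal polynomial of $a$ is separable and $\R[a]$ is semisimple.

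The step I expect to be the main obstacle is controlling the interaction of the $\Z/c_1\Z$-grading with the Fitting decomposition in $(1)$: one must verify that the Fitting idempotents sit in degree $0$, which is exactly what makes $B_0$ a genuine unital subalgebra of $A_0$ inheriting the positive definite form (and is also what lets the hypothesis of $(3)$ be rewritten as a splitting of $C_0$). Once this is in place, the presentation $B\cong B_0[a]/(a^{c_1}-b)$, the separability of $X^{c_1}-b_s$ in characteristic zero, and the short induction in $(3)$ are routine.
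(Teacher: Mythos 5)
Your proof is correct, but it takes a genuinely different route from the paper's in parts (1) and (3); part (2) coincides with the paper's argument. For (1) you use the Fitting decomposition $A = A_0(E_a)\oplus B$ as a product of graded ideals (having first observed that $E_a^{\,n}$ has graded kernel and image and that the Fitting idempotents sit in $A_0$), reduce the claim to the semisimplicity of $B$, and then present $B$ as $B_0[X]/(X^{c_1}-b)$ with $b\in B_0^\times$ and $B_0\subset A_0$ reduced, so that $B$ is a product of fields by separability of $X^{c_1}-b_s$ in characteristic zero. The paper instead twists $a$ by a generic $b\in A_0^\times$ so that the non-zero eigenvalues of $E_{(ab)^{c_1}}|_{A_0}$ become simple, shows any eigenvector of $E_{ab}$ with $\lambda\neq 0$ is recovered from its $A_0$-component via $v=\sum_k\lambda^{-k}E_{(ab)^k}(v_0)$, and then invokes its Lemma on generalized eigenspaces of multiplicity one. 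Your route has the mild advantages of avoiding the perturbation and of not depending on the real-eigenvalue hypothesis in that Lemma (a hypothesis the paper does not explicitly verify for $E_{ab}$); the paper's route is more elementary in the sense that it does not pass through the structure of $B$ as a module over $B_0$. For (3) both arguments are descending inductions, but organized differently: the paper introduces $b=P(a^{c_1})$ (with $\mu(X)=XP(X)$ the minimal polynomial of $E_{a^{c_1}}$), descends on $i$ to prove $a^i b=0$, and concludes that the minimal polynomial of $E_a$ divides the separable polynomial $XP(X^{c_1})$; you instead split $a=a'+a''$ along the Fitting decomposition and descend on the nilpotency order of $a'$ to prove $a'=0$, which immediately gives $A_0(E_a)=\Ker E_a$ and $A=\Ker E_a\oplus\im E_a$. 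Both are valid; your version makes the conclusion $A_0(E_a)=\Ker E_a$ transparent, while the paper's makes the semisimplicity of $\R[a]$ transparent. One small stylistic remark: the justification that $1_C,1_B\in A_0$ is more directly seen by noting $1\in A_0$ and that $A=C\oplus B$ is a decomposition into graded subspaces, so the $C$- and $B$-components of $1$ lie in $C_0$ and $B_0$ respectively.
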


\begin{proof}
1. Let $n = \dim A_0$. As in the proof of Proposition \ref{prop-invol}, the endomorphisms $E_a\vert_{A_0}$ are self-adjoint for $Q_0$. In particular they have real eigenvalues and there exists a basis $(e_i)_{i \in [1,n]}$ of orthogonal idempotents whose sum is 1 in $A_0$. Let $a \in A_1$. Then $a^{c_1} \in A_0$ and $E_{a^{c_1}}\vert_{A_0}$ is semisimple. Furthermore there exists $b \in A_0^\times$ such that $E_{(ab)^{c_1}}\vert_{A_0}$ is semisimple and has eigenvalues with multiplicity 1 except maybe for $0$. The minimal polynomial $\mu$ of $E_{(ab)^{c_1}}\vert_{A_0}$ therefore has simple roots. Since $1 \in A_0$ this implies that $\mu((ab)^{c_1}) = 0$ and the minimal polynomial of $E_{(ab)^{c_1}}$ divides $\mu$ and thus has simple roots. In particular $E_{(ab)^{c_1}}$ is semisimple and has the same eigenvalues as $E_{(ab)^{c_1}}\vert_{A_0}$. 

Let $v$ be an eigenvector of $E_{ab}$ with eigenvalue $\lambda \neq 0$. Write $v = \sum_kv_k$ with $v_k \in A_k$. Then $v_k$ is an eigenvector of $E_{(ab)^{c_1}}$ for the eigenvalue $\lambda^{c_1}$. In particular $v_0$ is the unique (up to scalar) eigenvector of $E_{(ab)^{c_1}}$ with eigenvalue $\lambda^{c_1}$. From $E_{ab}(v) = \lambda v$ we deduce $E_{(ab)^{c_1-k}}(v_k) = \lambda^{c_1-k} v_0$. Applying $E_{(ab)^k}$ we get $\lambda^{c_1} v_k = E_{(ab)^{c_1}}(v_k) = \lambda^{c_1-k} E_{(ab)^k}(v_0)$.
Finally we have
$$v = \sum_{k = 0}^{c_1 -1} \lambda^{-k} E_{(ab)^{k}}(v_0).$$
Note furthermore that for $\lambda \neq 0$ we have $A_\lambda(E_{ab}) = \Ker(E_{ab} - \lambda \id)$ since $E_{(ab)^{c_1}}$ is semisimple. Therefore $m_{E_{ab}}(\lambda) = 1$ for $\lambda \neq 0$. By the previous Lemma, this implies $R(A) \subset A_{0}(E_{ab}) = A_0(E_{a})$ (the last equality holds since $b$ was chosen invertible).

2. If $a$ is invertible, then $A_0(E_a) = 0$ and the result follows from 1.

3. Let $a \in A_1$. Let $\mu$ be the minimal polynomial of $E_{a^{c_1}}$. Since $E_{a^{c_1}}$ is semisimple, we have $\mu(X) = X P(X)$ with $P$ such that $P$ has only simple roots and $P(0) \neq 0$. The minimal polynomial of $E_a$ divides $X^{c_1} P(X^{c_1})$. Let $b = P(a^{c_1}) \in A_0$ and write $b = ac + d$ with $c \in A$ and $d \in A_0 \cap \Ker E_a$. 

\begin{lemma}
We have $a^ib = 0$ for $i \geq 1$
\end{lemma}

\begin{proof}
Per descending induction on $i$. For $i \geq {c_1}$ the result follows from $0 = \mu(a^{c_1}) = a^{c_1} b$. Assume $a^ib = 0$ for $i > 1$. Then $a^iP(a^{c_1}) = 0$ and the minimal polynomial of $E_a$ divides $X^iP(X)$. In particular $\Ker E_{a^{i+1}} = \Ker E_{a^i}$. We have $0 = a^{i}b = a^{i+1} c +a^i d = a^{i+1} c$. Thus $c \in \Ker E_{a^{i+1}} = \Ker E_{a^i}$. This implies $a^{i-1} b = a^i c + a^{i-1}d = a^i c = 0$. 
\end{proof}

In particular the minimal polynomial of $E_a$ divides $X P(X^{c_1})$ and therefore has only simple roots (over $\C$). This implies that the subalgebra generated by $a$ is semisimple. It also implies that $E_a$ is semisimple and $A = \im E_a \oplus \Ker E_a$. This finally implies $A_0(E_{a}) = \Ker E_a$ and the result follows from 1. 
\end{proof}

\begin{remark}
The above proposition works actually for any $a \in A_k$ such that $\gcd(k,c_1) = 1$
\end{remark}

\subsection{Application to quantum cohomology of Fano varieties}
\label{sect-qh}

Let $X$ be a smooth complex projective Fano variety of Picard rank $1$ and let $H$ be a divisor such that $\cO_X(H)$ is an ample generator of the Picard group. The index $c_1(X)$ of $X$ is defined via $-K_X = c_1(X) H$. We write $\HH(X) = H^*(X,\R)$, $h \in H^2(X,\Z)$ for the cohomology class of $H$ and $\cpt \in H^{2 \dim X}(X,\Z)$ for the cohomology class of a point in $X$.

We denote by $\QH(X)$ the small quantum cohomology ring obtained using only $3$-points Gromov-Witten invariants. We write $\starz$ for the product in $\QH(X)$. Recall that as $\R$-vector space we have $\QH(X) = \HH(X) \otimes_\R \R[q]$. The ring $\QH(X)$ is graded with $\deg(q) = 2 c_1(X)$. We write $\QH^{k}(X)$ for the degree $k$ graded piece. We let $A = \QH(X)_{q=1}$ be the algebra obtained from $\QH(X)$ by quotienting with the ideal $(q-1)$. This algebra is $\Z/2c_1(X)\Z$-graded and we write $A_k$ for its degree $k$ graded piece.

In particular we have a decomposition
$$A_0 = \bigoplus_{k \geq 0} H^{2 k c_1(X)}(X,\R).$$
Denote by $\varphi_0 : A_0 \to \R$ the projection on the first factor and let $Q_X: A_0 \times A_0 \to \R$ be the quadratic form defined by $Q_X(\alpha,\beta) = \varphi_0(\alpha \starz \beta)$.

\begin{thm}
\label{thm-alg-qh}
Let $X$ be Fano with Picard number $1$ and $Q_X$ positive definite.

1. Then $R(A) \subset A_0(E_{h})$.

2. If $\Ker E_h = 0$, then $A$ is semisimple.

3. If $A_0 = A_0 \cap \Ker E_h \oplus A_0 \cap \im E_h$, then $R(A) \subset \Ker E_h$, $A = \im E_h \oplus \Ker E_h$ and the subalgebra generated by $h$ is semisimple.
\end{thm}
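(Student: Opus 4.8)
The plan is to deduce Theorem \ref{thm-alg-qh} directly from Proposition \ref{main-prop} by verifying that the ring $A = \QH(X)_{q=1}$ with its $\Z/2c_1(X)\Z$-grading satisfies the hypotheses of that proposition, with the role of the ``degree $1$'' graded piece played by $A_1 := H^2(X,\R) \subset A$ (so really the role of the abstract grading in Section \ref{section-rad} is taken by $\tfrac{1}{2}$ times the cohomological degree, identified modulo $c_1(X)$). First I would record that $A_0 = \bigoplus_{k\ge 0} H^{2kc_1(X)}(X,\R)$ is a subalgebra of $A$ (it is the image of the even part in degrees divisible by $2c_1(X)$, and $\starz$ respects the grading), that $\varphi_0$ is $\R$-linear with $\varphi_0(1) = 1$ since $1 \in H^0(X)$ is the first summand, and that by hypothesis $Q_X(\alpha,\beta) = \varphi_0(\alpha\starz\beta)$ is positive definite on $A_0$. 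Since $h \in H^2(X,\R) = A_1$, the element $h$ is precisely an element of the ``$A_1$'' piece in the sense of Proposition \ref{main-prop}.

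With this dictionary in place, the three assertions are immediate transcriptions. For part~1: Proposition \ref{main-prop}.1 gives $R(A) \subset A_0(E_a)$ for every $a \in A_1$; taking $a = h$ yields $R(A) \subset A_0(E_h)$. For part~2: if $\Ker E_h = 0$ then $h$ is invertible in $A$ (a finite-dimensional algebra), so Proposition \ref{main-prop}.2 with $a = h$ gives that $A$ is semisimple. For part~3: the decomposition hypothesis $A_0 = (A_0 \cap \Ker E_h) \oplus (A_0 \cap \im E_h)$ is exactly the hypothesis of Proposition \ref{main-prop}.3 for $a = h$, so we obtain $R(A) \subset \Ker E_h$, $A = \im E_h \oplus \Ker E_h$, and semisimplicity of the subalgebra generated by $h$.

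The one point that genuinely needs to be checked — and which I expect to be the only real content beyond citing Section \ref{section-rad} — is that the abstract setup of that section applies here even though its grading group was written as $\Z/c_1\Z$ while $A$ is graded by $\Z/2c_1(X)\Z$. The resolution is that only even cohomology degrees of a relevant form occur: $h^{c_1(X)}$ sits in cohomological degree $2c_1(X)$, i.e. in $A_0$, so $E_{h^{c_1(X)}}|_{A_0}$ is the relevant ``$E_{a^{c_1}}$'' and all the minimal-polynomial arguments of the proof of Proposition \ref{main-prop} go through verbatim with $c_1$ replaced by $c_1(X)$ and ``$A_k$'' read as $\bigoplus_j H^{2(k + j c_1(X))}(X,\R)$. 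I would state this identification explicitly at the start of the proof and then simply invoke Proposition \ref{main-prop}. (Strictly, one should also note odd cohomology: if $H^{\mathrm{odd}}(X)\neq 0$ it contributes to the odd graded pieces of $A$, which play no role in parts 1--2 and are handled uniformly by the proposition's argument; for the Fano varieties of interest $H^{\mathrm{odd}}(X) = 0$ anyway.)

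Concretely, then, the proof is short: set up the identification of $A_0$, $\varphi_0$, $Q_X$, and $h \in A_1$ with the data of Proposition \ref{main-prop}, note that positive-definiteness of $Q_X$ is the standing hypothesis, and apply parts 1, 2, 3 of Proposition \ref{main-prop} respectively with $a = h$. No new estimates or Gromov--Witten computations are required at this stage; the work of exhibiting varieties for which $Q_X$ is actually positive definite is deferred to the later sections.
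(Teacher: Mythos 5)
Your proof is correct and is exactly the paper's approach: the paper's own proof of Theorem \ref{thm-alg-qh} is the single line ``Follows from Proposition \ref{main-prop}.'' Your careful unpacking of the grading normalization (the abstract $\Z/c_1\Z$-grading versus the $\Z/2c_1(X)\Z$-grading on $\QH(X)_{q=1}$, with $A_1 = H^2(X,\R)$ playing the role of the degree-one piece and $h^{c_1(X)} \in A_0$) correctly identifies and resolves the only point the paper leaves implicit.
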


\begin{proof}
Follows from Proposition \ref{main-prop}.
\end{proof}

\section{Varieties with $Q_X$ positive definite}
\label{section-QX}

In this section we give examples of Fano varieties $X$ with $Q_X$ positive definite. These varieties are obtained as complete intersections of homogeneous spaces. The commun feature of our homogeneous spaces is the fact that the variety of conics passing through a point has a positive definite intersection form on its middle cohomology. Note that this positivity property implies strong topological properties (see for example \cite{small-codim} and \cite{small-cras}).

\subsection{Fano of large index}

We start with general remarks on $Q_X$ for Fano varieties of large index. More precisely, we assume $2 c_1(X) > \dim X$. Note that with this assumption we have 
$$A_0 = H^0(X,\R) \oplus H^{2 c_1(X)}(X,\R).$$ 
Furthermore the fact that $1 \in H^0(X,\R)$ is a unit for the quantum cohomoloy implies $Q_X(1,1) = 1$ and $Q_X(1,\s) = 0$ for $\s \in H^{2 c_1(X)}(X,\R)$. To compute $Q_X$, we therefore only have to compute $\s \starz \s'$ for $\s , \s' \in H^{2 c_1(X)}(X,\R)$. By dimension arguments we have
$$\s \starz \s' = q \s'' + I_{2}(\s,\s',\pt) q^2$$
where $\s'' \in H^{2 c_1(X)}(X,\R)$ and $I_2(a,b,c) I_{0,3,2}(a,b,c)= $ is $3$-points Gromov-Witten invariant of degree $2$ in genus $0$ for the classes $a,b,c$. We obtain the following result.

\begin{lemma}
\label{lemm-c>dim}
Let $X$ be a Fano variety with $2 c_1(X) > \dim X$. Then $Q_X$ is positive definite if and only if $I_2(-,-,\pt)$ is positive definite on $H^{2 c_1(X)}(X,\R)$.
\end{lemma}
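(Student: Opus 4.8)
The plan is to unwind the definitions and reduce everything to a statement about the quadratic form $I_2(-,-,\pt)$ on $H^{2c_1(X)}(X,\R)$. First I would record, as already noted in the text preceding the lemma, that the hypothesis $2c_1(X) > \dim X$ forces $A_0 = H^0(X,\R) \oplus H^{2c_1(X)}(X,\R)$, since the next graded piece $H^{4c_1(X)}(X,\R)$ would live in degree $4c_1(X) > 2\dim X$ and hence vanishes. So $A_0$ decomposes orthogonally (with respect to $Q_X$) as $\R\cdot 1 \oplus H^{2c_1(X)}(X,\R)$: indeed $Q_X(1,1) = \varphi_0(1\starz 1) = \varphi_0(1) = 1$ and, for $\s \in H^{2c_1(X)}(X,\R)$, $Q_X(1,\s) = \varphi_0(\s) = 0$ because $\s$ is not in the degree-$0$ piece $H^0(X,\R)$ of $A_0$.

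Next I would compute $Q_X$ restricted to the subspace $H^{2c_1(X)}(X,\R)$. By the dimension count in the text, for $\s,\s' \in H^{2c_1(X)}(X,\R)$ one has $\s\starz\s' = q\,\s'' + I_2(\s,\s',\pt)\,q^2$ with $\s'' \in H^{2c_1(X)}(X,\R)$; setting $q = 1$ in $A$ and applying $\varphi_0$, which kills the degree-$2c_1(X)$ term $\s''$ and is the identity on the degree-$0$ scalar $I_2(\s,\s',\pt)$, we get $Q_X(\s,\s') = I_2(\s,\s',\pt)$. Here I should check that $I_2(\s,\s',\pt)$ is genuinely the coefficient of the unit: the $q^2$-term has cohomological degree $2\cdot 2c_1(X) - 2\dim X$ shifted appropriately, and a standard degree bookkeeping shows the Gromov-Witten invariant $I_{0,3,2}(\s,\s',\pt)$ is nonzero only when the total degrees match $H^0$, so the $q^2$-coefficient is the scalar $I_2(\s,\s',\pt)\cdot 1$. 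Combining this with the previous paragraph, $Q_X$ on $A_0 = \R\cdot 1 \oplus H^{2c_1(X)}(X,\R)$ is the orthogonal direct sum of the positive-definite form $\langle 1,1\rangle = 1$ and the form $I_2(-,-,\pt)$ on $H^{2c_1(X)}(X,\R)$.

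Finally, a direct sum of two symmetric bilinear forms is positive definite if and only if each summand is, so $Q_X$ is positive definite on $A_0$ if and only if $I_2(-,-,\pt)$ is positive definite on $H^{2c_1(X)}(X,\R)$. I do not anticipate a serious obstacle here; the only point requiring a little care is the degree/dimension bookkeeping that pins down which Gromov-Witten invariants contribute to the $q$- and $q^2$-coefficients of $\s\starz\s'$ and the verification that the $q^2$-coefficient is a scalar multiple of the fundamental class — but this is exactly the computation sketched in the paragraph preceding the lemma, so I would simply make it precise rather than reprove it.
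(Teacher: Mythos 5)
Your proposal is correct and follows exactly the argument the paper sketches in the paragraph immediately preceding the lemma: the orthogonal decomposition $A_0 = \R\cdot 1 \oplus H^{2c_1(X)}(X,\R)$ with $Q_X$ equal to $1$ on the first summand and equal to $I_2(-,-,\pt)$ on the second. Nothing further to flag.
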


For computing $I_2(-,-,\pt)$ we shall prove using Proposition \ref{prop-class-virt} that the virtual fundamental class is the actual fundamental class.

\subsection{Complete intersections in projective spaces}
\label{subsection-complete-intersections}

We first consider complete intersections in $\p^{n+r}$ with large index. We reinterpret here results obtained by Beauville \cite{beauville}. Let $X$ be a smooth complete intersection of $r$ hypersurfaces of degree $(d_1,\cdots,d_r)$ in $\p^{n+r}$ with $n \geq 2$. Assume that $n \geq 2 \sum(d_i-1) -1$.

\begin{lemma}
\label{lemm-comp-int}
The form $Q_X$ is positive definite.
\end{lemma}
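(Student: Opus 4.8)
The strategy is to reduce the claim, via Lemma \ref{lemm-c>dim}, to showing that the degree-$2$ three-point invariant $I_2(-,-,\pt)$ is positive definite on $H^{2c_1(X)}(X,\R)$, and then to identify this quadratic form with an honest intersection form on a space of conics, which is manifestly positive definite in the relevant range. First I would record that under the hypothesis $n \geq 2\sum(d_i-1)-1$ one has $2c_1(X) > \dim X$, so that Lemma \ref{lemm-c>dim} applies and the problem becomes a computation of $I_2$. Here $H^{2c_1(X)}(X,\R)$ is spanned (for $X$ not a quadric, by Lefschetz) by a power of the hyperplane class, so in fact $H^{2c_1(X)}(X,\R)$ is one-dimensional and it suffices to check that the single number $I_2(\s,\s,\pt)$ is strictly positive for $\s$ a nonzero class; but I would keep the argument uniform by working with the full middle-degree-of-conics picture rather than case-splitting.

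Next I would set up the moduli-theoretic computation. The invariant $I_2(\s_1,\s_2,\pt)$ counts degree-$2$ genus-$0$ stable maps to $X$ through a point and meeting two cycles representing $\s_1,\s_2$. Using Proposition \ref{prop-class-virt}, the key point is that $\M_{0,3}(X,2)$ has the expected dimension: this follows because $X$ is a complete intersection of low enough degree — the bound $n \geq 2\sum(d_i-1)-1$ is exactly what is needed to guarantee that the conics in $X$ form a family of the expected dimension and that the relevant incidence conditions are transverse — so the virtual class is the fundamental class and $I_2$ becomes an actual (enumerative) intersection number on the space of conics in $X$ passing through a fixed point. Following Beauville \cite{beauville}, I would describe the variety $F_x$ of conics in $X$ through a general point $x$, show it is smooth of the expected dimension $2c_1(X)-\dim X-2$ (equivalently, the evaluation map from $\M_{0,1}(X,2)$ is flat of the right relative dimension), and express $I_2(-,-,\pt)$ as the intersection pairing on the middle cohomology of $F_x$ (or of a natural $\p^1$-bundle-type incidence variety over it), pulled back along the evaluation maps.

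The heart of the matter, and the step I expect to be the main obstacle, is the \emph{positivity} of that intersection form. For complete intersections through a point, $F_x$ is itself realized as a complete intersection (in a projective bundle over $\p^{n+r-1}$, parametrizing the second point / direction of the conic) whose Hodge structure on middle cohomology is of the type for which the Hodge–Riemann bilinear relations give definiteness: when $2c_1(X)>\dim X$, the relevant cohomology of $F_x$ sits in degree below the "interesting" range and the primitive middle cohomology is pure of type $(p,p)$, so the Hodge–Riemann relations reduce to the statement that the intersection form is $(-1)^?$-definite on it, with the sign working out to be positive for the normalization of $I_2$ used here. Concretely I would invoke Beauville's computation \cite{beauville} that this form is (up to sign) positive definite, checking that the sign convention matches $\varphi_0(\s\starz\s')$; the delicate bookkeeping is precisely matching the sign in the Hodge–Riemann relations with the sign built into the definition of $Q_X$ via the coefficient of $q^2$ in the quantum product. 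Once that sign is pinned down, Lemma \ref{lemm-c>dim} immediately gives that $Q_X$ is positive definite, completing the proof.
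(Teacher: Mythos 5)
Your reduction is correct as far as it goes: since $n \geq 2\sum(d_i-1)-1$ is equivalent to $2c_1(X) > \dim X$, and since $H^{2c_1(X)}(X,\R)$ is at most one-dimensional (spanned by $h^{c_1(X)}$ when $c_1(X) \le \dim X$), Lemma \ref{lemm-c>dim} reduces the statement to the positivity of the single number $I_2(h^{c_1(X)},h^{c_1(X)},\cpt)$. However, this is not the route the paper takes for complete intersections. The paper's proof of Lemma \ref{lemm-comp-int} is a one-line appeal to Beauville's \emph{algebraic} computation of the subring of $\QH(X)$ generated by $h$: from Beauville's relation one gets that $h^{c_1(X)}$ satisfies $h^{c_1(X)} \starz h^{c_1(X)} = d_1^{d_1}\cdots d_r^{d_r}\, h^{c_1(X)}$ in $A$, so $h^{c_1(X)}/\prod d_i^{d_i}$ is an idempotent, the Gram matrix of $Q_X$ in the idempotent basis of $A_0$ is diagonal, and positivity is read off from the nonvanishing of $\varphi_0(h^{c_1(X)})$. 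Lemma \ref{lemm-c>dim} and the quantum-to-classical machinery you invoke are reserved in the paper for the cominuscule case (Proposition \ref{prop-geom-comin}).

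Beyond being a different route, the second half of your plan has a genuine gap. The identification of $I_2(-,-,\cpt)$ with an intersection pairing on a space of conics through a point is exactly the content of the quantum-to-classical formula \eqref{q-to-c}, which the paper establishes \emph{only} for cominuscule homogeneous spaces via Proposition \ref{prop-cmp}; no such statement is proved or cited for general complete intersections, and $F_x$ need not have the nice homogeneous structure that makes that formula work. Even setting this aside, the appeal to Hodge–Riemann relations is not the right mechanism: $H^{2c_1(X)}(X,\R)$ is one-dimensional, so there is no signature to compute — the question is whether a single enumerative count is strictly positive, i.e.\ an existence statement for conics through a point meeting two general codimension-$c_1(X)$ linear cycles, and Hodge–Riemann says nothing about that. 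Finally, the attribution is off: Beauville's paper derives the quantum relation from associativity and degree constraints, not from a geometric Hodge-theoretic analysis of the Fano variety of conics, so the "Beauville shows this is positive definite up to sign" citation you lean on does not exist. In short, the reduction via Lemma \ref{lemm-c>dim} is sound, but the proposed positivity argument would need to be replaced — most simply by Beauville's ring computation as the paper does, or by a direct existence argument for the relevant conics.
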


\begin{proof}
This easily follows from the results in \cite{beauville}. We have a basis $(1,h^{c_1(X)})$ of $A_0$ and $\varphi_0(h^{c_1(X)}) \neq 0$. Furthermore $h^{c_1(X)} \starz  h^{c_1(X)} = d_1^{d_1} \cdots d_r^{d_r} h^{c_1(X)}$ proving the result.
\end{proof}

\begin{remark}
Let $X$ be a complete intersection as above.

1. By Theorem \ref{thm-alg-qh}, we have $R(X) \subset A_0(E_h)$. Using the results of Beauville \cite{beauville} the primitive classes $H^{\dim X}_0(X,\R)$ in $H^{\dim X}(X,\R)$ (defined as the kernel of $E_h$) are nilpotent as well as the class $h^{c_1(X) + 1} - d_1^{d_1} \cdots d_r^{d_r} q h$. One can furthermore easily check that these classes generate the radical $R(X)$ of $\QH(X)$. So $\QH(X)$ is not semisimple in general.

2. Tian and Xu \cite{tian-xu} proved that the subalgebra generated by the hyperplane class in $\BQH(X)$ -- the big quantum cohomology -- is semisimple for any complete intersection as above.

3. We do not know in general whether $\BQH(X)$ is semisimple.
\begin{itemize}
\item By results of Hertling, Manin and Teleman \cite{HMT}, a variety has semisimple quantum cohomology only if its cohomology is even and pure of type $(p,p)$. By results of Deligne \cite{deligne} the only possible complete intersections are quadrics, the cubic surface and even-dimensional complete intersection of two quadrics. 
\item For the first three it is known that the (small) quantum cohomology is semisimple (see \cite{semisimple}, \cite{ciolli}).
\item For the last one, we do not know if $\BQH(X)$ is semisimple.
\end{itemize}
\end{remark}

\subsection{Cominuscule homogeneous spaces}
\label{subsect-qh-comin}

Let $G$ be a semisimple algebraic group. A parabolic subgroup $P$ is called \emph{cominuscule} if its unipotent radical $U_P$ is abelian. This group theoretic condition has many nice implications on the geometry of $X = G/P$ (\cite{seshadri}, \cite{small}, \cite{adv}, \cite{thomas-yong}). Table 1 gives a list of all cominuscule homogeneous spaces.

Recall that the vertices of the Dynkin diagram of $G$ are the simple roots. The marked vertex is the simple root of $G$ which is not a simple root of $P$. In the above table we denoted $\Gr(k,n)$ (resp. $\LG(n,2n),\OG(n,2n)$) the Gra\ss mann variety of $k$-subspaces in $\C^n$ (resp. isotropic $n$-subspaces for a symplectic or non degenerate quadratic form in $\C^{2n}$, for $\OG(n,2n)$ we only consider a connected component of the Gra\ss mann variety). We wrote $Q_n$ for a smooth $n$-dimensional quadric hypersurface and $\OO \PP^2 = E_6/P_6$ and $E_7/P_7$ are the Cayley plane and the Freudenthal variety.

$$\begin{array}{ccccccc}
Type & X & Diagram  & Dimension & 
c_1(X) & \dim(\Gamma_2) \\
\hline
 A_{n-1} & \Gr(k,n) &\setlength{\unitlength}{2.5mm}
\begin{picture}(10,3)(0,0)
\put(0,0){$\circ$}
\multiput(2,0)(2,0){5}{$\circ$}
\multiput(.73,.4)(2,0){5}{\line(1,0){1.34}}
\put(4,0){$\bullet$}
\end{picture} & k(n-k) & n & 4 \\
B_n & Q_{2n-1} &
\setlength{\unitlength}{2.5mm}
\begin{picture}(10,3)(0,0)
\put(0,0){$\circ$}
\multiput(2,0)(2,0){5}{$\circ$}
\multiput(.73,.4)(2,0){4}{\line(1,0){1.34}}
\multiput(8.73,.2)(0,.4){2}{\line(1,0){1.34}}
\put(0,0){$\bullet$}
\end{picture} & 2n-1 & 2n - 1 & 2n - 1 \\
C_n & \LG(n,2n) &
\setlength{\unitlength}{2.5mm}
\begin{picture}(10,3)(0,0)
\put(0,0){$\circ$}
\multiput(2,0)(2,0){5}{$\circ$}
\multiput(.73,.4)(2,0){4}{\line(1,0){1.34}}
\multiput(8.73,.2)(0,.4){2}{\line(1,0){1.34}}
\put(10,0){$\bullet$}
\end{picture} & \frac{n(n+1)}{2} & n+1 & 3 \\
D_n & Q_{2n-2} &
\setlength{\unitlength}{2.5mm}
\begin{picture}(10,3)(0,0)
\put(2,0){$\circ$}
\multiput(4,0)(2,0){4}{$\circ$}
\multiput(2.73,.4)(2,0){4}{\line(1,0){1.34}}
\put(10,0){$\bullet$}
\put(0,-1.1){$\circ$}
\put(0,1.2){$\circ$}
\put(.6,1.5){\line(5,-3){1.5}}
\put(.6,-.64){\line(5,3){1.5}}
\end{picture}
\vspace{.2cm}
 & 2n-2 & 2n-2 & 2n - 2 \\
D_n & \OG(n,2n) &
\setlength{\unitlength}{2.5mm}
\begin{picture}(10,3)(0,0)
\put(2,0){$\circ$}
\multiput(4,0)(2,0){4}{$\circ$}
\multiput(2.73,.4)(2,0){4}{\line(1,0){1.34}}
\put(0,1.2){$\bullet$}
\put(0,-1.1){$\circ$}
\put(.6,1.5){\line(5,-3){1.5}}
\put(.6,-.64){\line(5,3){1.5}}
\end{picture}
\vspace{.2cm}
 & \frac{n(n-1)}{2} & 2n-2 & 6 \\
E_6 & \OO\PP^2 & 
\setlength{\unitlength}{2.5mm}
\begin{picture}(10,3)(-1,-.5)
\put(0,0){$\circ$}
\multiput(2,0)(2,0){4}{$\circ$}
\multiput(.73,.4)(2,0){4}{\line(1,0){1.34}}
\put(0,0){$\bullet$}
\put(4,-2){$\circ$}
\put(4.42,-1.28){\line(0,1){1.36}}
\end{picture}
& 16 & 12 & 8 \\
E_7 & E_7/P_7 & 
\setlength{\unitlength}{2.5mm}
\begin{picture}(10,3)(0,0)
\put(0,0){$\circ$}
\multiput(2,0)(2,0){5}{$\circ$}
\multiput(.73,.4)(2,0){5}{\line(1,0){1.34}}
\put(10,0){$\bullet$}
\put(4,-2){$\circ$}
\put(4.42,-1.28){\line(0,1){1.36}}
\end{picture}
& 27 & 18 & 10
\end{array}
$$

\medskip
\medskip
\medskip

\centerline{Table 1. List of cominuscule homogeneous spaces.}

\subsubsection{Cominuscule varieties with $Q_X$ positive definite}

Let $X$ be cominuscule. The following results on $\QH(X)$ were proved in \cite{semisimple} and \cite{strange}. Let $\pt$ the the cohomology class of a point and let $\s_u$ be a Schubert class. Then 
$$\pt \starz \s_u = q^{d(u)} \textrm{PD}(\s_{\bar u})$$
where $d(u)$ is a non negative integer and $u \mapsto \bar u$ is an involution on Schubert classes. It was also proved that $\s_u \mapsto q^{-d(u)} \s_{\bar u}$ defines an algebra involution. As explained in Example \ref{exam-comin} this defines an inner product on $\QH(X)_{q = 1}$. Note that proving that $Q_X$ is positive definite is equivalent to proving that the classes of degree a multiple of $c_1(X)$ are fixed by the above involution. Since the involution is explicit, an easy check gives the following result.

\begin{prop}
Let $X$ be cominuscule. The form $Q_X$ is positive definite if and only if $X$ is one of the following varieties:

\medskip

\centerline{\begin{tabular}{lll}
$\p^n$ & & $\LG(n,2n)$ for $n \in [3,4]$ \\
$\Gr(2,n)$ for $n \geq 2$ & & $\OG(n,2n)$ for $n \in [1,6]$ \\
$Q_n$ for $n \geq 2$ & & $\O\P^2$ or $E_7/P_7$. \\
  \end{tabular}}

\medskip

\end{prop}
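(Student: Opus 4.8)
The plan is to prove the classification by making the involution $\s_u \mapsto q^{-d(u)}\s_{\bar u}$ completely explicit on each cominuscule space and checking, case by case, whether every Schubert class $\s_u$ whose (cohomological) degree is a multiple of $c_1(X)$ satisfies $\bar u = u$ and $d(u) = 0$ -- equivalently, whether it is fixed by the involution. Indeed, by Example \ref{exam-comin}, the form $Q$ defined by $Q(\s_u,\s_{\bar v}) = \delta_{u,v}$ is always an inner product on $A(X) = \QH(X)_{q=1}$; the question is whether the restriction of this inner product to $A_0 = \bigoplus_{k\geq 0} H^{2kc_1(X)}(X,\R)$ agrees with $Q_X$. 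The restriction agrees precisely when $\bar u = u$ (with $d(u)=0$) for all $u$ with $|\s_u|$ a multiple of $c_1(X)$, in which case $Q_X(\s_u,\s_v) = \delta_{u,v}$ on a basis of $A_0$, hence is positive definite; and if some such $u$ has $\bar u \neq u$ then one checks $Q_X(\s_u,\s_u) = 0$, so $Q_X$ is degenerate, not positive definite.

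First I would recall the combinatorial description of the strange duality involution on each type. For $\p^n$ and for quadrics $Q_n$ the index is large ($c_1 = n+1$, resp. $n-1$ or $n$), so $A_0$ consists of $1$ and a single middle-degree class for which $Q_X(\s,\s)$ is a positive Gromov--Witten number (this is essentially Lemma \ref{lemm-comp-int}), so these are immediate. For the Grassmannians $\Gr(k,n)$, Schubert classes are indexed by partitions in a $k\times(n-k)$ box and $c_1 = n$; the involution sends a partition to (a shift of) its complement, and one checks that the self-dual classes of degree a multiple of $n$ force $k=2$ (or the whole $\p^{n-1}$ case $k=1$). For $\LG(n,2n)$ and $\OG(n,2n)$ one uses the description of the involution in \cite{semisimple}, \cite{strange} in terms of strict partitions / shifted diagrams, and a direct enumeration pins down $n\le 4$, resp. $n\le 6$. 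For the two exceptional cominuscule spaces $\O\P^2 = E_6/P_6$ and $E_7/P_7$, I would simply list the (finitely many) Schubert classes whose degree is a multiple of $c_1$ ($c_1 = 12$, resp. $18$) and verify by hand (or with the program \cite{programme}) that each is fixed by the involution.

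The main obstacle is not conceptual but bookkeeping: one must correctly identify, in each type, the explicit form of the involution $u\mapsto \bar u$ on the Schubert basis and then check the fixed-point condition on the -- typically very few -- classes sitting in degrees $c_1(X), 2c_1(X), \dots$. The subtlety is that for several of these spaces (e.g. $\LG(n,2n)$ with $n$ large, or $\Gr(k,n)$ with $k\ge 3$) there genuinely is a degree-$c_1$ Schubert class that is \emph{not} self-dual, and one has to exhibit it to conclude that $Q_X$ fails to be positive definite; conversely for the listed varieties one must verify self-duality of \emph{all} relevant classes, not just the first one. Since the involution is explicit in every case, this reduces to a finite check, which is what the statement's proof will carry out.
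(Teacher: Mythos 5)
Your approach is exactly the paper's: reduce positive-definiteness of $Q_X$ to the statement that the strange-duality involution fixes every Schubert class of (co)degree a multiple of $c_1(X)$, then check this case by case using the explicit description of the involution on each cominuscule type. Indeed, with $\varphi_0 = \varphi|_{A_0}$, one has $Q_X(\s_u,\s_v) = \varphi(\s_u\starz\s_v) = (\s_u,\pt\starz\s_v) = \delta_{u,\bar v}$ at $q=1$, so the Gram matrix of $Q_X$ on the Schubert basis of $A_0$ is the symmetric permutation matrix of $u\mapsto\bar u$, which is positive definite iff the involution is the identity on $A_0$; this is precisely the criterion the paper invokes before the proposition.

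One small slip: when some $u$ in $A_0$ has $\bar u\ne u$, you conclude ``$Q_X(\s_u,\s_u)=0$, so $Q_X$ is degenerate''; in fact $Q_X$ is never degenerate here (a permutation matrix is invertible), but the presence of an isotropic vector $\s_u\ne 0$ already shows $Q_X$ is not positive definite, which is all that is needed. The conclusion stands.
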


\subsubsection{Geometric proof} We now give a geometric proof of the above result when $2 c_1(X) > \dim X$ (this only excludes $\LG(4,8)$ of the list). According to Lemma \ref{lemm-c>dim}, we only have to understand $I_2(-,-,\pt)$ on $H^{2 c_1(X)}(X,\R)$. We recall a geometric construction for cominuscule homogeneous spaces.

For $x,y \in X$, let $d(x,y)$ be the minimal degree of a rational curve passing through $x$ and $y$ and $d_X(2) = \max\{d(x,y) \ | \ x,y \in X\}$. We denote by $\Gamma_d(x,y)$ the union of all degree $d(x,y)$ rational curves passing through $x$ and $y$. Note that $d_X(2) = 1 $ if and only if $X$ is a projective space so we may assume $d_X(2) \geq 2$. The following result was proved in \cite{cmp1}.

\begin{prop}
\label{prop-cmp}
Let $d \in [0,d_X(2)]$ and let $x,y \in X$ with $d(x,y) = d$. 

1. The variety $\Gamma_d(x,y)$ is a homogeneous Schubert variety in $X$.

2. Any degree $d$ curve is contained in a $G$-translate of $\Gamma_d(x,y)$.

3. A generic degree $d$ curve is contained in a unique $G$-translate of $\Gamma_d(x,y)$.

4. There passes a unique degree $d$ curve through three general points in $\Gamma_d(x,y)$.
\end{prop}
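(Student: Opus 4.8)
The whole argument rests on one structural input about cominuscule $X = G/P$: the diagonal action of $G$ on $X \times X$ has only finitely many orbits, and these are exactly the level sets $\mathcal O_d = \{(x,y) \in X \times X \mid d(x,y) = d\}$ for $d \in [0,d_X(2)]$. The plan is to deduce this from cominuscularity: fixing a point $x$, its stabilizer $P_x$ acts on $T_x X \cong \Lie(U_P^-)$ through the Levi quotient $L$, and because $U_P$ is abelian this is a representation of $L$ with only finitely many orbits (the ``ranks''); one then checks that the corresponding $P_x$-orbits on $X$ are precisely the fibres of $y \mapsto d(x,y)$. Granting this, $\Gamma_d(x,y)$ depends, up to the $G$-action, only on $d$, which is what makes the four statements meaningful.

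For part 1, I would fix a maximal torus $T$ inside $P_x$ that also fixes $y$, and a Borel subgroup $B$ with $T \subset B \subset P_x$ chosen so that $(x,y)$ is in the standard position realising the distance $d$ (that is, $y = \dot w P$ for the appropriate minimal-length $w$). Since ``rational curve of degree $d(x,y)$ through $x$ and $y$'' is a condition preserved by any subgroup fixing both $x$ and $y$, the variety $\Gamma_d(x,y)$ is $B$-stable, hence a finite union of Schubert varieties. To pin it down to a single irreducible Schubert variety I would show that $P_x \cap P_y$ acts transitively on the set of minimal (degree $d(x,y)$) rational curves through $x$ and $y$ — equivalently that this set is a homogeneous space — so that its total image under evaluation is irreducible; this, together with $B$-stability, identifies $\Gamma_d(x,y)$ as one Schubert variety, which one then recognises explicitly as a smaller cominuscule homogeneous space in which $x$ and $y$ sit at the maximal distance $d_{\Gamma_d}(2) = d$.

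For part 2, I would reduce to irreducible curves and argue that a general irreducible degree-$d$ curve $C$ has two general points $x',y'$ with $d(x',y') = d$ — a curve with all pairs of points at distance $< d$ would be swept out by curves of lower degree, forcing $\deg C < d$ — so that $C$ is one of the minimal curves through $x',y'$ and hence lies in $\Gamma_d(x',y')$, a $G$-translate of $\Gamma_d(x,y)$; reducible curves, and curves lying in proper linear or Schubert subvarieties, are then handled by specialisation from this generic situation. For parts 3 and 4 I would set up the $G$-homogeneous incidence variety $\mathcal I = \{(C, g\,\Gamma_d(x,y)) \mid C \text{ a minimal } d\text{-curve},\ C \subset g\,\Gamma_d(x,y)\}$, with its two projections to the space $\mathcal C_d$ of minimal $d$-curves and to the homogeneous family $\{g\,\Gamma_d(x,y)\}_{g \in G} \cong G/P'$ of $G$-translates ($P'$ the stabilizer of $\Gamma_d(x,y)$); since every space and map in sight is homogeneous, checking that the generic fibres are single points comes down to comparing $\dim \mathcal C_d$, $\dim \Gamma_d(x,y)$, and the dimension of the space of triples of points in general position in $\Gamma_d(x,y)$, and, dually, to counting minimal $d$-curves inside the fixed $\Gamma_d(x,y)$ through three general points of it.

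The main obstacle is the step in part 1 of showing that $P_x \cap P_y$ acts transitively on the minimal curves joining $x$ and $y$ and of identifying $\Gamma_d(x,y)$ explicitly: this is the one place where the structure theory of cominuscule spaces — and ultimately the case list of Table 1 — really has to be used, and it is also what supplies the numerical data that turn the homogeneous dimension counts in parts 3 and 4 into exact equalities. The subtlety in part 2 concerning non-generic and reducible degree-$d$ curves is the second, milder, point requiring care.
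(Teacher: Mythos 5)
The paper does not actually prove this proposition; it simply cites \cite{cmp1}, where the argument is carried out by explicit, largely combinatorial analysis of the minuscule/cominuscule posets. Your proposal takes a more uniform homogeneous-space route, but it contains a genuine gap in part~1 at the very first reduction. You claim that because ``rational curve of degree $d(x,y)$ through $x$ and $y$'' is preserved by subgroups fixing both $x$ and $y$, the variety $\Gamma_d(x,y)$ is $B$-stable for a Borel $B \subset P_x$. But $B$ does \emph{not} fix $y$ (only $B\cap P_y$ does), so the only stability you get for free is under $P_x\cap P_y$, which is not a Borel of $G$ and does not yield ``finite union of Schubert varieties.'' Concretely, in $X=\Gr(2,4)$ with $x=\langle e_1,e_2\rangle$, $y=\langle e_2,e_4\rangle$ (so $d(x,y)=1$) and $B$ the upper-triangular Borel, the unique line $\Gamma_1(x,y)=\{V:\langle e_2\rangle\subset V\subset\langle e_1,e_2,e_4\rangle\}$ is not $B$-stable, since $B$ moves $\langle e_2\rangle$. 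For the \emph{minimal}-length representative $y=\langle e_1,e_3\rangle$ the conclusion does hold, but that is a nontrivial fact about the minimal representative and the geometry of $\Gamma_d$ --- exactly the sort of thing one then has to prove, not something that follows from your stated reason. In other words, the proposition that $\Gamma_d(x,y)$ depends only on the $G$-orbit of $(x,y)$ (and is ``aligned'' with the choice of $B$) is essentially equivalent to what you are trying to prove, so the argument is circular at this point.

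Two smaller issues. In part~2, the claim that a curve all of whose pairs of points are at distance $<d$ must be ``swept out by curves of lower degree, forcing $\deg C<d$'' is not justified and is not clearly true as stated; the correct argument in this style has to use the classification of $\Gamma_{d'}$ for $d'<d$ (so again the case analysis), and the passage from the generic irreducible curve to arbitrary stable maps needs the properness argument you gesture at, but spelled out. In parts~3 and~4, you correctly reduce to a dimension count on a $G$-homogeneous incidence variety, but the exact equalities are precisely the numerical output of the explicit identification of $\Gamma_d(x,y)$, which you have (rightly) flagged as the main outstanding input. So the overall architecture is reasonable and genuinely different from the combinatorial case-check in \cite{cmp1}, but as written the key step of part~1 is unsupported and the whole proof leans on identifications you have deferred.
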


For $d$ and $x,y \in X$ as in the former proposition, denote by $Y_d(X)$ the variety of all $G$-translates of $\Gamma_d(x,y)$. Since $\Gamma_d(x,y)$ is a Schubert variety, its stabiliser is a parabolic subgroup $Q$ and $Y_d(X) = G/Q$. Without loss of generality, we may assume that $P \cap Q$ contains a Borel subgroup. Let $Z_d(X) = G/(P \cap Q)$ be the incidence variety. Write $M_d(X)$ for the moduli space of stable maps of genus $0$ and degree $d$ with $3$ marked points to $X$. Let $\Bl_d(X) = \{ (\Gamma_d,f) \in Y_d(X) \times M_d(X) \ | \ f \textrm{ factors through } \Gamma_d \}$ and $Z_d^{(3)}(X) = \{ (\Gamma_d,x_1,x_2,x_3) \in Y_d(X) \times X^3  \ | \ x_i \in \Gamma_d \textrm{ for all $i \in [1,3]$} \}$. We have a diagram
$$\xymatrix{\Bl_d(X) \ar[rr]^-\pi \ar[d]_-\phi & & M_d(X) \ar[d]^-{\ev_i} \\
Z_d^{(3)}(X) \ar[r]^-{e_i} & Z_d(X) \ar[r]^-p \ar[d]_-q & X \\
& Y_d(X), & \\}$$
where $\pi$ is the natural projection, $\phi$ maps $(\Gamma_d,f)$ to $(\Gamma_d,\ev_1(f),\ev_2(f),\ev_3(f))$, $e_i$ maps $(\Gamma_d,x_1,x_2,x_3)$ to $(\Gamma_d,x_i)$ and $p$ and $q$ are the natural projections. The above proposition implies that $\pi$ and $\phi$ are both birational. Note also that the third point of Proposition \ref{prop-cmp} implies that considering $\Gamma_d \in Y_d(X)$ as a smooth subvariety in $X$, we have $2 \dim(\Gamma_d) = d c_1(\Gamma_d)$ (see also \cite[Formula (5) on Page 73]{cmp1}). For $d = 2$ this implies $\dim \Gamma_2 = c_1(\Gamma_2)$ so $\Gamma_2$ is a smooth quadric hypersurface (see also Table 1 for its dimension).

By Proposition \ref{prop-class-virt} and since $M_d(X)$ has expected dimension, for $\s,\s',\s'' \in \HH(X)$, the Gromov-Witten invariant $I_d(\s,\s',\s'')$ is the push-forward to the point of the class $\ev_1^*\s \cup \ev_2^*\s' \cup \ev_3^*\s''$. Since $\pi$ and $\phi$ are birational, an easy diagram chasing gives the following formula (usually called \emph{quantum to classical principle}):
\begin{equation}
\label{q-to-c}
I_d(,\s,\s',\s'') = q_*p^* \s \cup q_*p^* \s' \cup q_*p^* \s''.
\end{equation}
The very first version of this result was proved in \cite{bkt} for (maximal isotropic) Gra\ss mann varietes and generalised in \cite{cmp1}. For the formal computation in the above setting (and even in equivariant $K$-theory), we refer to \cite[Lemma 3.5]{BM}.

\begin{prop}
\label{prop-geom-comin}
Let $X$ be one of the following varieties

\medskip

\centerline{\begin{tabular}{lll}
$\p^n$ & & $\LG(n,2n)$ for $n \in [1,4]$ \\
$\Gr(2,n)$ for $n \geq 2$ & & $\OG(n,2n)$ for $n \in [1,6]$ \\
$Q_n$ for $n \geq 2$ & & $\O\P^2$ or $E_7/P_7$. \\
  \end{tabular}}

\medskip

Then $I_2(-,-,\pt)$ is positive definite.
\end{prop}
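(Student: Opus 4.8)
The plan is to use the quantum-to-classical formula \eqref{q-to-c} with $d=2$ to rewrite $I_2(\sigma,\sigma',\pt)$ as an intersection number on the incidence variety $Z_2(X)$, and then identify it with an intersection form on the fibre of $q\colon Z_2(X)\to Y_2(X)$, which is a smooth quadric $\Gamma_2$. Concretely, for $\sigma,\sigma'\in H^{2c_1(X)}(X,\R)$ we have $I_2(\sigma,\sigma',\pt)=q_*p^*\sigma\cup q_*p^*\sigma'\cup q_*p^*\pt$ in $H^*(Y_2(X),\R)$ pushed to the point. The class $q_*p^*\pt$ is (up to a positive constant) the class of a point of $Y_2(X)$, so the computation localises to a single fibre $\Gamma_2=q^{-1}(\mathrm{pt})$: the invariant becomes $\int_{\Gamma_2}(p^*\sigma|_{\Gamma_2})\cup(p^*\sigma'|_{\Gamma_2})$, i.e.\ the intersection form on $H^{\dim\Gamma_2}(\Gamma_2,\R)$ evaluated on the restrictions $p^*\sigma|_{\Gamma_2}$ and $p^*\sigma'|_{\Gamma_2}$. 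Here one uses that $2\dim\Gamma_2=2c_1(\Gamma_2)$ forces $\dim\Gamma_2=c_1(\Gamma_2)$, so that $\Gamma_2$ is a quadric and the relevant restricted classes land in its middle cohomology.

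Next I would check two things. First, that the restriction map $H^{2c_1(X)}(X,\R)\to H^{\dim\Gamma_2}(\Gamma_2,\R)$ induced by $p$ (read through the diagram, i.e.\ $\sigma\mapsto p^*\sigma$ restricted to a fibre of $q$) is injective on the span of the Schubert classes of degree $2c_1(X)$; this is where one must go case by case through the list, using the known combinatorics of Schubert classes on cominuscule $X$ and the explicit description of $\Gamma_2$ as a homogeneous Schubert subvariety from Proposition \ref{prop-cmp}. Second, that the intersection form on the middle cohomology of the quadric $\Gamma_2$ is positive definite on the image. For an even-dimensional quadric $Q_{2m}$ the middle intersection form has signature with a definite piece spanned by the two maximal linear subspace classes and a hyperbolic piece from $h^m$ and the point class; for odd-dimensional quadrics $Q_{2m+1}$ the middle cohomology is one-dimensional with positive self-intersection. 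So the statement reduces to verifying that the image of the restriction map lies in the positive-definite part. Combining these, $I_2(-,-,\pt)$ is positive definite on $H^{2c_1(X)}(X,\R)$.

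The routine-but-case-heavy part is the bookkeeping: for each of $\p^n$, $Q_n$, $\Gr(2,n)$, $\LG(n,2n)$ with $n\le 4$, $\OG(n,2n)$ with $n\le 6$, $\O\P^2=E_6/P_6$ and $E_7/P_7$, one must read off $\dim\Gamma_2$ (column four of Table 1), recognise $\Gamma_2$ as the appropriate quadric, and match the restrictions of the top-degree Schubert classes with the positive-definite subspace. In the small cases ($\p^n$, quadrics) this is immediate since $H^{2c_1(X)}$ is one-dimensional; for $\Gr(2,n)$, $\OG$, $\LG$ and the exceptional cases one invokes the explicit Schubert calculus already used in \cite{semisimple,strange,cmp1}.

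The main obstacle, in my view, is establishing cleanly that the geometric restriction map $\sigma\mapsto q_*p^*\sigma$ behaves as a genuine restriction to the fibre $\Gamma_2$ and is injective with image in the definite part — i.e.\ controlling how the top Schubert classes of $X$ meet a general conic-neighbourhood $\Gamma_2$. Once the quantum-to-classical reduction is in place, the remaining quadratic-form positivity is classical (signature of the intersection form on a quadric), so the real work is the uniform geometric input rather than the final linear algebra. I expect the proof will either treat the injectivity uniformly via the fact (from Proposition \ref{prop-cmp}(3)) that a general conic lies in a unique translate of $\Gamma_2$, which makes $\phi$ birational and pins down $q_*p^*$ precisely, or fall back on the explicit tables of Gromov-Witten invariants of degree $2$ already computed in the literature for these spaces.
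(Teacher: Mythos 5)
The key step in your proposal — the localization — goes to the wrong fiber, and this is a genuine error rather than a stylistic divergence from the paper.

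After the quantum-to-classical reduction \eqref{q-to-c}, you assert that $q_*p^*\pt$ is (up to a positive scalar) the class of a point in $Y_2(X)$, and hence that the invariant $I_2(\sigma,\sigma',\pt)$ localizes to a single fiber $\Gamma_2 = q^{-1}(y)$. This is not the case. The class $p^*\pt$ is supported on the fiber $F = p^{-1}(x)$, which parametrizes all the $G$-translates of $\Gamma_2$ passing through the fixed point $x$; hence $q_*p^*\pt = j_*[F]$ where $j\colon F \hookrightarrow Y_2(X)$. Except for $X = Q_n$ (where $Y_2(X)$ is a point), $F$ has positive dimension: from Table 2, $F = \Gr(2,n-2)$ for $\Gr(2,n)$, $F = \Gr(4,n)$ for $\OG(n,2n)$, $F = Q_8$ for $E_6/P_6$, $F = E_6/P_1$ for $E_7/P_7$, and so on. Integrating against $j_*[F]$ and applying the projection formula gives
$$I_2(\sigma,\sigma',\pt) = j^*q_*p^*\sigma \cup j^*q_*p^*\sigma',$$
so the computation reduces to the intersection pairing on $H^{\dim F}(F,\R)$ via the map $j^*q_*p^*\colon H^{2c_1(X)}(X,\R) \to H^{\dim F}(F,\R)$ — the middle cohomology of $F$, not of the quadric $\Gamma_2$. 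One then verifies case by case that this map is an isomorphism and that the intersection form on $H^{\dim F}(F,\R)$ is positive definite (for $\Gr(2,n)$, $Q_8$, $E_6/P_1$ this is in the literature). Your argument instead asks about positive-definiteness of the middle intersection form of the quadric $\Gamma_2$, which is simply not the relevant pairing (and for even-dimensional quadrics is not even always definite, as you note). Put geometrically: the conics through a fixed point are not confined to one $\Gamma_2$; one sums over the whole family $F$ of $\Gamma_2$'s through that point, and the whole content of the positivity claim lives on $F$.

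The rest of your plan — using Proposition \ref{prop-cmp}(3) and the birationality of $\pi$ and $\phi$ to justify \eqref{q-to-c}, then a case-by-case Schubert-calculus check — is aligned with the paper, but the target of the restriction map and the variety whose middle cohomology must be positive definite both need to be corrected from $\Gamma_2$ to $F$.
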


\begin{proof}
We apply formula \eqref{q-to-c}. Let $\s'' = \pt$, then $q_*p^*\pt = j_*[F]$ where $F$ is any fiber of $p$ and $j : F \to Y_d(X)$ the inclusion. Projection formula gives
$$I_d(,\s,\s',\s'') = I_d(,\s,\s',\pt) = q_*p^* \s \cup q_*p^* \s' \cup q_*p^* \pt = j^*q_*p^* \s \cup j^*q_*p^* \s'.$$
The following table gives the list of the varieties $F$ (see \cite[Table on Page 71]{cmp1}).

$$\begin{array}{c|cccccc}
X & \Gr(2,n) & Q_n & \LG(n,2n) & \OG(n,2n) & E_6/P_6 & E_7/P_7 \\
Y_2(X) & \Gr(4,n) & \{\pt\} & \IG(n-2,2n) & \OG(n-4,2n) & E_6/P_1 & E_7/P_1 \\
F & \Gr(2,n-2) & \{\pt\} & \Gr(2,n) & \Gr(4,n) & Q_8 & E_6/P_1. \\
\end{array}$$

\medskip

\centerline{Table 2. Varieties $Y_2(X)$ and $F$.}

\medskip

Note that since $\pi$ and $\phi$ are birational, we have $\dim X + 2 c_1(X) = \dim Z_2^{(3)}(X) = \dim Z_2(X) + 2 \dim \Gamma_2 = \dim X + \dim F+ 2 \dim \Gamma_2$. In particular we get 
$$\dim F = 2(c_1(X) - \dim \Gamma_2).$$
Thus $\deg j^*q_*p^* \s = \deg q_*p^*\s = 2 c_1(X) - 2 \dim \Gamma_2 = \dim F$ for $\s \in H^{2 c_1(X)}(X,\Z)$. We get an induced map
$$j^*q_*p^* : H^{2 c_1(X)}(X,\Z) \to H^{\dim F}(F,\Z).$$
Now for the varieties in the list of the proposition one easily checks (using Schubert classes) that this map is an isomorphism. Furthermore the variety $F$ is of even dimension and has positive definite Poincar\'e pairing on $H^{\dim F}(F,\Z)$ (see \cite[Table on Page 572]{small-codim} for the fact that the Poincar\'e pairing is positive definite on the middle cohomology of $\Gr(2,n)$, $Q_8$ and $E_6/P_1$). This finishes the proof.
\end{proof}

\subsection{Linear sections of cominuscule homogeneous spaces}

In this subsection, we extend the result on cominuscule varieties to linear sections of cominuscule varietes. More precisely we prove

\begin{thm}
\label{thm-def-pos}
Let $X$ be a cominuscule variety with $Q_X$ positive definite and let $Y$ be a general linear section of codimension $k$ of $X$ with $2 c_1(Y) > \dim Y$. Then $Q_Y$ is positive definite.
\end{thm}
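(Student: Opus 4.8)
The plan is to reduce the statement for the linear section $Y$ to the already-established positivity for the ambient cominuscule $X$ (Proposition \ref{prop-geom-comin}) by comparing the relevant Gromov-Witten invariants. Since $2c_1(Y) > \dim Y$, Lemma \ref{lemm-c>dim} tells us that it suffices to show $I_2^Y(-,-,\pt)$ is positive definite on $H^{2c_1(Y)}(Y,\R)$, where $I_2^Y$ denotes the degree-$2$, genus-$0$, three-point invariant of $Y$. So the whole problem becomes: understand degree-$2$ rational curves on a general linear section, and relate the associated quadratic form to the one on $X$.

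\textbf{Key steps.} First I would set up the geometry of conics on $Y = X \cap \p^{n+r-k}$ (a general codimension-$k$ linear section, with $\dim X = N$): a conic in $Y$ is a conic in $X$ whose (2-dimensional) linear span lies in the ambient linear subspace cutting out $Y$. Using the family $Y_2(X) = G/Q$ of translates $\Gamma_2$ of the quadric $\Gamma_2(x,y)$ from Proposition \ref{prop-cmp}, together with the quantum-to-classical diagram, I would describe the space of conics on $Y$ via the locus of those $\Gamma_2 \in Y_2(X)$ that meet $Y$ in a conic (equivalently, whose span lies in the fixed linear subspace), and show — by a Bertini/genericity argument in the total space $\Bl_2(X)$ and $Z_2(X)$ — that for general linear sections $\bar M_2(Y)$ still has the expected dimension and $\pi,\phi$ remain birational onto their images. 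This lets me invoke Proposition \ref{prop-class-virt} for $Y$ and write the analogue of formula \eqref{q-to-c}: $I_2^Y(\s,\s',\pt) = j_Y^* q_{Y*}p_Y^*\s \cup j_Y^* q_{Y*}p_Y^*\s'$ for a fiber $F_Y$ of the restricted projection. Second, I would identify the Lefschetz-type relationship between $H^{2c_1(X)}(X)$ and $H^{2c_1(Y)}(Y)$: since $2c_1(Y) > \dim Y$ and $c_1(Y) = c_1(X) - k$, the restriction map $H^{2c_1(X)}(X,\R) \to H^{2c_1(Y)}(Y,\R)$ (suitably shifted, composing with cup-product by powers of $h$) is surjective, by weak Lefschetz applied $k$ times, so every class in $H^{2c_1(Y)}(Y)$ comes from $X$. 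Third, I would check that under this correspondence the form $I_2^Y(-,-,\pt)$ is, up to a positive multiple, the restriction to a subspace (or a quotient) of $I_2^X(-,-,\pt)$ — because $q_{Y*}p_Y^*$ is compatible with $q_*p^*$ under the inclusion of incidence varieties, and the fiber $F_Y$ sits inside $F$ (or maps to it) with the Poincaré pairing behaving well. A positive definite form restricted to any subspace stays positive definite, which gives the conclusion.

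\textbf{Main obstacle.} The delicate point is the geometry in the first step: ensuring that for a \emph{general} linear section the moduli space $\bar M_{0,3}(Y,2)$ has the expected dimension and that the birationality of $\pi$ and $\phi$ survives restriction — i.e. that a general conic in $Y$ lies on a unique translate $\Gamma_2$ and that three general points of such a $\Gamma_2 \cap Y$ still determine the conic. This requires a careful dimension count showing the "bad locus" (translates $\Gamma_2$ tangent to or contained in the linear subspace in an excessive way, or conics on $Y$ not of this form) has the right codimension, and it is where the hypothesis $2c_1(Y) > \dim Y$ is essential. Once the moduli space is under control and Proposition \ref{prop-class-virt} applies, identifying the quadratic form with a restriction of the positive-definite form on $X$ is comparatively formal, modulo bookkeeping with Schubert classes and the explicit fibers $F$ from Table 2.
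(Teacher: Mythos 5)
Your proposal follows the paper's route almost exactly: reduce to positivity of $I_2(-,-,\pt)$ via Lemma \ref{lemm-c>dim}, show $\M_{0,3}(Y,2)$ has the expected dimension for general $Y$ by examining the map $\Bl_2(Y)\to Y_2(X)$ and stratifying by the type of the intersection $\Gamma_2\cap L_k$, then transfer the degree-$2$ invariant to $X$ via the quantum-to-classical diagram. The only place where the paper is sharper than your sketch is the final comparison: instead of comparing a fiber $F_Y$ to $F$ (note that in fact $F_Y=F$, since the fiber of $p$ over a point of $Y$ is the same whether computed in $Z_2(Y)$ or $Z_2(X)$), the paper applies flat base change on the Cartesian square $Z_2(Y)=p^{-1}(Y)\subset Z_2(X)$ to obtain the clean identity $I_2(\tau,\tau',\tau'')_Y = I_2(j_*\tau,j_*\tau',j_*\tau'')_X$; since $j_*\colon H^{2c_1(Y)}(Y,\R)\to H^{2c_1(X)}(X,\R)$ is injective by Lefschetz (here $2c_1(Y)>\dim Y$ is used), positive definiteness transfers immediately.
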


\begin{remark}
\label{rem-def-pos}
The possible values of $k$ in the above Theorem are as follows:

\medskip

\centerline{\begin{tabular}{lll}
For $\p^n$ we have $k \leq n$ & & For $\OG(5,10)$ we have $k \leq 5$ \\
For $\Gr(2,n)$ we have $k \leq 3$ & & For $\OG(6,12)$ we have $k \leq 4$ \\
For $Q_n$ we have $k \leq n$ & & For $\O\p^2$ we have $k \leq 7$ \\
For $\LG(3,6)$ we have $k \leq 1$ & & For $E_7/P_7$ we have $k \leq 8$. \\
  \end{tabular}}
\end{remark}

We will prove this result in two steps. First note that since $2 c_1(Y) > \dim Y$ it is enough to check (using Lemma \ref{lemm-c>dim}) that $I_2(-,-,\pt)$ is positive definite. Let $L_k$ be the linear subspace of codimension $k = \dim X - \dim Y$ cuting $Y$ out of $X$. Note that since $c_1(Y) = c_1(X) -k$ and $\dim Y = \dim X -k$, the condition $2 c_1(Y) > \dim Y$ translates into $k < 2 c_1(X) - \dim X$ and that this implies $k < \dim \Gamma_2$ (where $\Gamma_2$ is the fiber of $q: Z_2(X) \to Y_2(X)$).

\subsubsection{Moduli space $M_2(Y)$} We first prove the following result asserting that $M_2(Y)$ has the expected dimension.

\begin{prop}
We have $\dim M_2(Y) = 2 c_1(Y) + \dim Y$.
\end{prop}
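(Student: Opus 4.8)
The lower bound $\dim M_2(Y)\ge 2c_1(Y)+\dim Y$ is automatic: $M_2(Y)=\overline{M}_{0,3}(Y,2)$, and every irreducible component of a Kontsevich space of stable maps to a smooth projective variety has dimension at least the expected one, here $\dim Y+2c_1(Y)$ (recall $c_1(Y)=c_1(X)-k$ and $\dim Y=\dim X-k$). So the content is the reverse inequality. If $X$ is a projective space or a quadric — the only cominuscule varieties of coindex $\le 1$ — then a general linear section of $X$ is again a projective space or a quadric, hence convex, and $M_2(Y)$ is unobstructed of the expected dimension; so I assume from now on that $X$ has coindex $\ge 2$, i.e. $\dim X\ge c_1(X)+1$. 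Together with the already-noted inequality $k<2c_1(X)-\dim X$ (which is exactly the hypothesis $2c_1(Y)>\dim Y$), this yields the numerical bound $k\le c_1(X)-2$, which will be used at the very end.

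First I would realise $M_2(Y)$ as the zero locus of a section of a vector bundle inside $M_2(X)$. Since $X$ is homogeneous it is convex, so $M_2(X)=\overline{M}_{0,3}(X,2)$ is smooth of the expected dimension $\dim X+2c_1(X)$. Let $\pi\colon\overline{\mathcal C}\to M_2(X)$ be the universal curve, $\ev\colon\overline{\mathcal C}\to X$ the universal map, and $\cE=\pi_*\ev^*\cO_X(1)$. For every stable map $f\colon C\to X$ the line bundle $f^*\cO_X(1)$ has total degree $2$ on the genus-$0$ curve $C$ and is globally generated, so $h^0(C,f^*\cO_X(1))=3$ and $h^1=0$; hence $\cE$ is a rank-$3$ vector bundle on $M_2(X)$. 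The $k$ linear forms $s_1,\dots,s_k$ cutting $Y$ out of $X$ restrict to sections of $\cO_X(1)$ and therefore, via $\ev$ and $\pi_*$, define a section $\sigma=(\sigma_1,\dots,\sigma_k)$ of the rank-$3k$ bundle $\cE^{\oplus k}$, and by construction its zero locus $Z(\sigma)$ is exactly $M_2(Y)$.

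Next I would estimate $\dim Z(\sigma)$ by splitting $M_2(X)$ along the closed locus $\overline D$ of stable maps whose image lies in a line of $X$ (multiple covers of lines and their degenerations with reducible or contracted domain). Off $\overline D$ the image of $f$ is a conic spanning a $\p^2$, and then $H^0(\p^N,\cO(1))\to H^0(C,f^*\cO_X(1))$ is surjective because $H^0(\p^2,\cO(1)\otimes\mathcal I_{f(C)})=0$ for a conic $f(C)\subset\p^2$; thus $\cE$ is globally generated by the linear forms on $M_2(X)\setminus\overline D$, so for a general choice of the $s_i$ the section $\sigma$ is transverse there and $\dim(Z(\sigma)\setminus\overline D)\le\dim M_2(X)-3k=\dim Y+2c_1(Y)$ — a standard incidence-variety / Kleiman-type count, using that the fibre over $M_2(X)\setminus\overline D$ of $\{([f],(s_i)):f^*s_i=0\ \forall i\}$ is a product of $k$ codimension-$3$ subspaces of $H^0(\p^N,\cO(1))$. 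On $\overline D$, however, the image lies in a line, so the linear forms span only a rank-$2$ subsheaf of $\cE$; the same argument then gives $\dim(Z(\sigma)\cap\overline D)\le\dim\overline D-2k$. Finally $\overline D$ maps to the Fano scheme of lines in $X$, which has the expected dimension $\dim X+c_1(X)-3$ (again by convexity of $X$), with fibres of dimension $\le\dim\overline{M}_{0,3}(\p^1,2)=5$, so $\dim\overline D\le\dim X+c_1(X)+2$ and
$$\dim(Z(\sigma)\cap\overline D)\le\dim X+c_1(X)+2-2k\le\dim X+2c_1(X)-3k=\dim Y+2c_1(Y),$$
the middle inequality being precisely $k\le c_1(X)-2$. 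Taking the maximum over the two strata gives $\dim M_2(Y)\le2c_1(Y)+\dim Y$, and with the automatic lower bound this proves the proposition.

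The main difficulty is exactly the locus $\overline D$ of maps multiply covering a line: there the linear forms of $\p^N$ no longer span all of $H^0(C,f^*\cO_X(1))$ (only the pullbacks of linear forms under the degree-$2$ map to the line), so $\cE$ fails to be globally generated and the naive codimension count loses a factor of $k$; one recovers it only by combining the a priori bound $\dim\overline D\le\dim X+c_1(X)+2$ with the numerical hypothesis $k\le c_1(X)-2$ coming from $2c_1(Y)>\dim Y$. The remaining points — making the two ``general $L_k$'' transversality statements precise, and checking that the degenerate strata of $\overline D$ (reducible or contracted domains) neither raise its dimension nor enlarge the subsheaf of $\cE$ spanned by the linear forms — are routine but require some care.
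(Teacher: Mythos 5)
Your proof is correct (modulo the transversality and stratification checks you explicitly flag as ``routine but requiring care'') but proceeds by a genuinely different route than the paper's. The paper exploits the cominuscule-specific $\Gamma_2$ machinery already set up for the quantum-to-classical principle: by Proposition \ref{prop-cmp}, every degree-$2$ map to $X$ factors through a unique $G$-translate of the Schubert quadric $\Gamma_2$, so $\Bl_2(Y)=\pi^{-1}(M_2(Y))$ fibres over $Y_2(X)$ with fibre over $\Gamma_2$ the space of degree-$2$ stable maps to $\Gamma_2\cap L_k$. The paper then stratifies $Y_2(X)$ according to whether $\Gamma_2\cap L_k$ is a smooth quadric of dimension $\dim\Gamma_2-k$, a quadric of lower rank, or of excess dimension, and checks that the generic stratum gives the expected dimension while the degenerate strata contribute less; this also yields irreducibility of $M_2(Y)$ (not just its dimension). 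Your argument instead realizes $M_2(Y)$ inside the smooth ambient space $M_2(X)$ as the zero locus of a section of the rank-$3k$ bundle $\cE^{\oplus k}$ with $\cE=\pi_*\ev^*\cO_X(1)$, isolates the bad locus $\overline D$ of multiple covers of lines where the linear forms generate only a rank-$2$ subsheaf, and closes the gap by a direct bound $\dim\overline D\le\dim X+c_1(X)+2$ combined with the numerical inequality $k\le c_1(X)-2$ (equivalent, given coindex $\ge 2$, to $2c_1(Y)>\dim Y$). This is more elementary and does not use the cominuscule structure beyond convexity and linear normality, so it would apply verbatim to general linear sections of any convex Fano $X$ of Picard rank $1$; on the other hand it gives no irreducibility statement (which the paper's proof does, though the theorem only uses the dimension). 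One small omission: you implicitly assume $M_2(Y)\ne\emptyset$ when invoking the lower bound; the paper gets this for free from the surjectivity of $\Bl_2(Y)\to Y_2(X)$ (every $\Gamma_2\cap L_k$ contains a conic since $\dim\Gamma_2>k$), and you could add the same one-line observation.
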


\begin{proof}
Note that we have a map of stacks $M_2(Y) \to \mathfrak{M}_{0,3}$
where $\mathfrak{M}_{0,3}$ is the stack of prestable curves of genus
$0$ with $3$ marked points. The fibers of this map are the schemes of
morphisms of degree $2$ from a fixed prestable curve to $X$ (see for
example \cite{behrend}). In
particular the irreducible components of any of these fibers has
dimension at least the corresponding expected dimension. 

Set $\Bl_2(Y) = \pi^{-1}(M_2(Y))$. We first prove that map $\Bl_2(Y) \to Y_2$ is surjective. Indeed. let $\Gamma_2 \in Y_2$. Then $\Gamma_2$ is a quadric of dimension $\dim \Gamma_2 > k$. Its intersection with $L_k$ therefore contains a conic. In particular, there exists a stable map in $M_2(Y)$ factorising through $\Gamma_2$ proving the surjectivity. The fiber of the map $\Bl_2(Y) \to Y_2(X)$ over $\Gamma_2$ is therefore given by the genus zero stable maps of degree 2 and three marked points to $\Gamma_2 \cap L_k$. We thus need to understand this intersection more precisely. 
We shall consider $L_k$ as the intersection of $k$ hyperplanes $H_1,\cdots,H_k$.
\begin{itemize}
\item[1.] the intersection $\Gamma_2 \cap L_k$ is a smooth quadric of dimension $\dim \Gamma_2 -k$,
\item[2.] the intersection $\Gamma_2 \cap L_k$ is a quadric of dimension $\dim \Gamma_2 -k$ and rank $\ell < \dim \Gamma_2 - k + 1$,
\item[3.] we have $\dim( \Gamma_2 \cap L_k) > \dim \Gamma_2 -k$.
\end{itemize}
Since the dimension of the moduli space of genus zero stable maps of degree 2 to a quadric of given rank is well known, an easy check proves that the locus in $\Bl_2(Y)$ over points $\Gamma_2 \in Y_2(X)$ such that the intersection $\Gamma_2 \cap L_k$ is for each prestable curve in $\mathfrak{M}_{0,3}$ of dimension strictly less than the expected dimension. In particular irreducible components of $M_2(Y)$ come from irreducible components of $\Bl_2(Y)$ containing points mapping in $Y_2$ to an quadric $\Gamma_2$ such that $\Gamma_2 \cap L_k$ is a smooth quadric of dimension $\dim \Gamma_2 - k$. Since any stable map to a such intersection $\Gamma_2 \cap L_k$ is a limit of a stable map from an irreducible curve we can consider only irreducible curves. This implies that $M_2(Y)$ is irreducible of expected dimension $\dim Y + 2 c_1(Y)$.
\end{proof}

\subsubsection{Proof of Theorem \ref{thm-def-pos}} Since $M_2(Y)$ has expected dimension, the virtual class is the fundamental class by Proposition \ref{prop-class-virt}. Let $Z_2(Y) = p^{-1}(Y)$ and denote by $r$ and $s$ the projections $r:Z_2(Y) \to Y$ and $s:Z_2(Y) \to Y_2(X)$. Since $\pi$ and $\phi$ restricted to $\Bl_2(Y)$ are again birational, the same computation as in the cominuscule case gives the relation
$$I_2(\tau,\tau',\tau'')_Y = r_*s^*\tau \cup r_*s^*\tau' \cup r_*s^*\tau''$$
where $I_2(-,-,-)_Y$ denotes the Gromov-Witten invariants in degree $2$ in $Y$. But the diagram
$$\xymatrix{Z_2(X) \ar[r]^-p & X \\
Z_2(Y) \ar[r]^-r \ar@{^(->}[u]^-i & Y \ar@{^(->}[u]_-j}$$
is Cartesian with $p$ flat (it is a locally trivial fibration since $X = G/P$, see for example \cite[Proposition 2.3]{BCMP}). In particular we have $i_*r^* = p^*j_*$. We thus get $s_*r^*\tau = q_*i_*r^* \tau = q_*p^*j_*\tau$. We deduce
$I_2(\tau,\tau',\tau'')_Y 
= I_2(j_*\tau , j_*\tau' , j_*\tau'')_X.$
In particular the result follows since $I_2(-,-,\pt)_X$ is positive definite.

\begin{remark}
\label{rem-thm-def-pos}
Note that we proved more than Theorem \ref{thm-def-pos}. Indeed, for any cohomology classes $\tau,\tau',\tau'' \in \HH(Y)$ we have the equality
$$I_2(\tau,\tau',\tau'')_Y 
= I_2(j_*\tau , j_*\tau' , j_*\tau'')_X.$$
\end{remark}

\subsubsection{Examples}
\label{sub-exqm}
 Several linear sections satisfying Theorem \ref{thm-def-pos} are classical.

1. Hyperplane sections of the Gra{\ss}mann variety $\Gr(2,n)$. The Pl\"ucker embedding is given by the representation $\Lambda^2\C^n$. A general linear section corresponds to a general symplectic form on $\C^n$ and the hyperplane section is the subvariety of isotropic $2$-dimensional subspaces. 

For $n = 2p$ even, the variety $Y = \IG(2,2p)$ is homogeneous. 

For $n = 2p + 1$ odd, the variety $Y = \IG(2,2p+1)$ is not homogeneous. This variety has two orbits under its automorphism group and is known as the odd symplectic Gra{\ss}mann variety of lines. We refer to \cite{mihai,pech,pasquier,pp} for several geometric results on this variety.

2. Hyperplane sections of $\O\p^2 = E_6/P_6$ are homogeneous under the group $F_4$. Actually we have $Y = F_4/P_4$ which is the coadjoint variety of type $F_4$ (see \cite{land-man,adjoint}).

3. For $X = \Gr(2,5)$ and $k = 3$, then $Y =V_5$ is the \emph{del Pezzo} threefold of index $2$ and degree $5$.

4. Note that we obtain almost all Fano varieties $X$ of coindex 3 \emph{i.e.} with $c_1(X) = \dim X -2$: we obtain all Fano varieties $X$ of coindex 3 with genus $g \in [7,10]$ \emph{i.e.} missing the extremal values $g = 6$ and $g = 12$ (see \cite[Theorem 5.2.3]{fano}).

\subsection{Adjoint varieties}
\label{subsection-adjoint}

The last family of varietes with $Q_X$ positive definite are the adjoint varieties (see \cite{adjoint}). These are homogeneous spaces and can be defined, for $G$ a semisimple group, as the closed $G$-orbit in $\p\g$ where $\g$ is the Lie algebra of $G$ and $G$ acts on its Lie algbera by via the adjoint representation.
The list of adjoint varieties is given in Table 3. Note that the following equality holds: $\dim X = 2 c_1(X) - 1$ (except in type $C_n$).

The following result was proved in \cite[Proof of Proposition 6.5]{adjoint}

\begin{prop}
\label{prop-def-pos}
Let $X$ be an adjoint variety. Then $Q_X$ is positive definite.
\end{prop}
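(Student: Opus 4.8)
First I would reduce the statement to a single positivity check via Lemma \ref{lemm-c>dim}. Every adjoint variety satisfies $2c_1(X) > \dim X$: if $X$ is not of type $C_n$ then $\dim X = 2c_1(X)-1$, while in type $C_n$ the adjoint variety is $\p^{2n-1}$, for which $2c_1(X) = 4n > 2n-1$. Hence Lemma \ref{lemm-c>dim} applies and $Q_X$ is positive definite if and only if $I_2(-,-,\pt)$ is positive definite on $H^{2c_1(X)}(X,\R)$. In type $C_n$ this group is $H^{4n}(\p^{2n-1},\R) = 0$ and there is nothing to prove; in all other types $H^{2c_1(X)}(X,\R) = H^{\dim X + 1}(X,\R)$, and we are reduced to a statement about $3$-point degree $2$ Gromov--Witten invariants with one insertion a point.

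Next I would compute this invariant, following \cite[Proof of Proposition 6.5]{adjoint}. Since $X = G/P$ is homogeneous, $M_2(X)$ has the expected dimension, so by Proposition \ref{prop-class-virt} the virtual class is the fundamental class and $I_2(\sigma,\sigma',\pt)$ is a genuine enumerative number: it counts, with multiplicity, the conics through a fixed general point $x$ meeting cycles Poincaré-dual to $\sigma$ and $\sigma'$. One then analyses the variety of conics through $x$ — which for adjoint varieties is governed by the contact structure — and obtains, in the spirit of the quantum-to-classical computations of §\ref{subsect-qh-comin} but adapted to the adjoint setting, an identification of $H^{2c_1(X)}(X,\R)$ with (a piece of) the middle cohomology of an auxiliary variety $F$ attached to this family of conics, under which the bilinear form $I_2(-,-,\pt)$ becomes the intersection (Poincaré) form of $F$. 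Granting this, positive-definiteness of $I_2(-,-,\pt)$ follows once one knows that the intersection form on the relevant middle cohomology of $F$ is positive definite.

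The substantive step is this last positivity, which is checked type by type: one identifies the auxiliary variety $F$ for each adjoint type, verifies that the natural map $H^{2c_1(X)}(X,\R) \to H^{\dim F}(F,\R)$ is an isomorphism (a Schubert-class computation between groups of small rank), and recognises $F$ as a homogeneous space (a sub-adjoint variety, an isotropic Grassmannian, a quadric, and so on) whose middle cohomology carries a positive definite intersection form, as recorded for instance in \cite{small-codim}. Combined with the enumerative interpretation above, this yields that $I_2(-,-,\pt)$, and hence $Q_X$, is positive definite. This case analysis is exactly what is carried out in \cite[Proof of Proposition 6.5]{adjoint}.

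The main obstacle is precisely this case analysis. Unlike the cominuscule family of §\ref{subsect-qh-comin}, the adjoint varieties are not governed by a single uniform combinatorial pattern: the geometry of conics through a point varies from type to type (in particular the auxiliary variety $\Gamma_2$ is no longer simply a smooth quadric), so identifying $F$ and establishing the cohomology isomorphism requires separate work for the exceptional types $G_2$, $F_4$, $E_6$, $E_7$ and $E_8$. A less geometric alternative, which isolates the same combinatorial core, is to compute $\sigma\starz\sigma'$ directly from the explicit presentation of $\QH(X)$ for adjoint varieties given in \cite{adjoint}, extract the coefficient of $q^2$ for $\sigma,\sigma' \in H^{2c_1(X)}(X,\R)$, and verify positive-definiteness of the resulting quadratic form on the Schubert basis.
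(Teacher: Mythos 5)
Your proposal is essentially correct, and in fact more detailed than what the paper itself offers: the paper's ``proof'' of Proposition~\ref{prop-def-pos} is a bare citation to \cite[Proof of Proposition 6.5]{adjoint}, with no further argument. Your reduction is sound: the inequality $2c_1(X) > \dim X$ indeed holds for every adjoint variety (with equality $\dim X = 2c_1(X)-1$ outside type $C_n$, and $X=\p^{2n-1}$ in type $C_n$), so Lemma~\ref{lemm-c>dim} reduces the statement to positivity of $I_2(-,-,\pt)$ on $H^{2c_1(X)}(X,\R)$, and in type $C_n$ this group vanishes. That much is a genuine improvement on the paper's exposition.

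Where you should be more careful is the geometric ``quantum-to-classical'' sketch. Proposition~\ref{prop-cmp}, the incidence diagram with $Y_d(X)$, $Z_d(X)$, $\Gamma_d$ and the auxiliary fibre $F$, and the clean formula~\eqref{q-to-c} are all established in \cite{cmp1} \emph{for cominuscule} $X$; for adjoint varieties the variety of conics through two points is not a single smooth quadric Schubert variety and the uniform story does not transfer verbatim, as you yourself flag. Your final paragraph's ``less geometric alternative'' --- extracting the $q^2$-coefficient of $\sigma\starz\sigma'$ directly from the explicit short-root presentation of $\QH(X)$ in \cite{adjoint} and checking positive-definiteness type by type --- is in fact much closer to what is actually done in \cite[Proof of Proposition 6.5]{adjoint}, and is the route I would present as the proof rather than the speculative $F$-picture. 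With that adjustment your argument matches the paper's (outsourced) proof.
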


$$\begin{array}{ccccc}
Type & variety & diagram  & dimension & \hspace*{5mm}index \hspace*{5mm}\\
 A_{n} &\hspace{5mm} \textrm{Fl}(1,n\ ;n+1) \hspace{5mm} & \setlength{\unitlength}{2.5mm}
\begin{picture}(14,3)(-2,0)
\put(0,0){$\circ$}
\multiput(2,0)(2,0){5}{$\circ$}
\multiput(.73,.4)(2,0){5}{\line(1,0){1.34}}
\put(0,0){$\bullet$}
\put(10,0){$\bullet$}
\end{picture} & 2 n - 1 & (n,n) \\
B_n & \hspace{5mm} \OG(2,{2n+1}) \hspace{5mm} &
\setlength{\unitlength}{2.5mm}
\begin{picture}(14,3)(-2,0)
\put(0,0){$\circ$}
\multiput(2,0)(2,0){5}{$\circ$}
\multiput(.73,.4)(2,0){4}{\line(1,0){1.34}}
\multiput(8.73,.2)(0,.4){2}{\line(1,0){1.34}}
\put(2,0){$\bullet$}
\end{picture} & 4 n - 5 & 2 n - 2 \\
C_n & \hspace{5mm} \p^{2n-1} \hspace{5mm} &
\setlength{\unitlength}{2.5mm}
\begin{picture}(14,3)(-2,0)
\put(0,0){$\circ$}
\multiput(2,0)(2,0){5}{$\circ$}
\multiput(.73,.4)(2,0){4}{\line(1,0){1.34}}
\multiput(8.73,.2)(0,.4){2}{\line(1,0){1.34}}
\put(0,0){$\bullet$}
\end{picture} & 2 n - 1 & 2 n  \\
D_n & \hspace{5mm} \OG(2,2n)\hspace{5mm} &
\setlength{\unitlength}{2.5mm}
\begin{picture}(14,3)(-2,0)
\put(2,0){$\circ$}
\multiput(0,0)(2,0){5}{$\circ$}
\multiput(0.73,.4)(2,0){4}{\line(1,0){1.34}}
\put(10,1.2){$\circ$}
\put(2,0){$\bullet$}
\put(10,-1.1){$\circ$}
\put(8.6,0.2){\line(5,-3){1.5}}
\put(8.6,0.5){\line(5,3){1.5}}
\end{picture}
 & 4 n - 7 & 2 n - 3 \\
E_6 &\hspace{5mm} E_6/P_2 \hspace{5mm}& 
\setlength{\unitlength}{2.5mm}
\begin{picture}(14,3)(-2,0)
\put(0,0){$\circ$}
\multiput(2,0)(2,0){4}{$\circ$}
\multiput(.73,.4)(2,0){4}{\line(1,0){1.34}}
\put(0,0){$\circ$}
\put(4,-2){$\bullet$}
\put(4.42,-1.28){\line(0,1){1.36}}
\end{picture}
& 21 & 11 \\
E_7 & \hspace{5mm}E_7/P_1\hspace{5mm} & 
\setlength{\unitlength}{2.5mm}
\begin{picture}(14,3)(-2,0)
\put(0,0){$\circ$}
\multiput(2,0)(2,0){5}{$\circ$}
\multiput(.73,.4)(2,0){5}{\line(1,0){1.34}}
\put(0,0){$\bullet$}
\put(4,-2){$\circ$}
\put(4.42,-1.28){\line(0,1){1.36}}
\end{picture}
& 33 & 17 \\
E_8 & \hspace{5mm}E_8/P_8\hspace{5mm} & 
\setlength{\unitlength}{2.5mm}
\begin{picture}(14,3)(-2,0)
\put(0,0){$\circ$}
\multiput(2,0)(2,0){6}{$\circ$}
\multiput(.73,.4)(2,0){6}{\line(1,0){1.34}}
\put(12,0){$\bullet$}
\put(4,-2){$\circ$}
\put(4.42,-1.28){\line(0,1){1.36}}
\end{picture}
& 57 & 29 \\
F_4 & \hspace{5mm} F_4/P_1 \hspace{5mm} &
\setlength{\unitlength}{2.5mm}
\begin{picture}(14,3)(-2,0)
\put(0,0){$\circ$}
\multiput(2,0)(2,0){3}{$\circ$}
\multiput(.73,.4)(2,0){1}{\line(1,0){1.34}}
\multiput(4.73,.4)(2,0){1}{\line(1,0){1.34}}
\multiput(2.73,.2)(0,.4){2}{\line(1,0){1.34}}
\put(0,0){$\bullet$}
\end{picture} & 15 & 8 \\
G_2 & \hspace{5mm} G_2/P_1 \hspace{5mm} &
\setlength{\unitlength}{2.5mm}
\begin{picture}(14,3)(-2,0)
\put(0,0){$\circ$}
\multiput(2,0)(2,0){1}{$\circ$}
\multiput(.73,.4)(2,0){1}{\line(1,0){1.34}}
\multiput(0.73,.2)(0,.4){2}{\line(1,0){1.34}}
\put(0,0){$\bullet$}
\end{picture} & 5 & 3 \\
\end{array}
$$

\medskip
\medskip

\centerline{Table 3. List of adjoint varietes.}

\section{Semisimplicity of the quantum cohomology}

In this section we apply the results in Section \ref{section-rad} to the varieties of Section \ref{section-QX} and get results on the semisimplicity of their quantum cohomology. 

\subsection{Linear sections of cominuscules homogeneous spaces}

We consider the varieties $Y$ obtained as linear sections of a cominuscule homogeneous space $X$ satisfying the assumptions of Theorem \ref{thm-def-pos}. These varietes are listed in Remark \ref{rem-def-pos}. 

\subsubsection{Multiplication with degree $2$ classes} We want to understand the endomorphism $E_h^Y$ of $\QH(Y)$ obtained by multiplication with $h$ the hyperplane class. Let $j : Y \to X$ be the inclusion and let $\tau \in \HH(Y)$. We denote by $h$ the hyperplane class in $\HH(X)$ and $\HH(Y)$ as well. Projection formula gives the following result.

\begin{lemma}
\label{lemm-class}
We have $j_*(h \cup \tau) = h \cup j_*\tau$.
\end{lemma}

As in the proof of Theorem \ref{thm-def-pos} (see also Remark \ref{rem-thm-def-pos}) we prove a result comparing Gromov-Witten invariants on $X$ and $Y$. Write $I_d(-,-,-)_X$ and $I_d(-,-,-)_Y$ for Gromov-Witten invariants of degree $d$ in $X$ and $Y$. 

\begin{lemma}
\label{lemm-pos-gen}
Let $\tau,\tau' \in \HH(Y)$ be cohomology classes such that the following conditions hold.
\begin{itemize}
\item $\deg \tau + \deg \tau' = 2 \dim Y + 2 c_1(Y) -2$.
\item There exists varieties $S,S'$ in $Y$ with $j_*\tau = [S]$, $j_*\tau' = [S']$ which are in general position in $X$.
\end{itemize}
Then $I_1(\tau,\tau',h)_Y = I_1(j_*\tau,j_*\tau',h)_X$
\end{lemma}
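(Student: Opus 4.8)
The plan is to mimic the proof of Theorem \ref{thm-def-pos} (as generalised in Remark \ref{rem-thm-def-pos}), replacing the moduli space $M_2(X)$ of degree-$2$ stable maps by the moduli space $M_1(X)$ of degree-$1$ stable maps, i.e.\ lines. First I would recall the analogue of the geometric setup of Subsection \ref{subsect-qh-comin} in degree $d=1$: one has $Y_1(X)$, the variety of $G$-translates of the minimal Schubert variety $\Gamma_1(x,y)$ swept out by lines through two general points, the incidence variety $Z_1(X)$ with its projections $p : Z_1(X) \to X$ and $q : Z_1(X) \to Y_1(X)$, and the birational maps $\pi, \phi$ from $\Bl_1(X)$. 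Since lines on $X = G/P$ have expected dimension, Proposition \ref{prop-class-virt} applies and the quantum-to-classical principle \eqref{q-to-c} gives $I_1(\s,\s',\s'')_X = q_*p^*\s \cup q_*p^*\s' \cup q_*p^*\s''$.

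Next I would run the same argument for $Y$: set $Z_1(Y) = p^{-1}(Y)$ with projections $r : Z_1(Y) \to Y$ and $s : Z_1(Y) \to Y_1(X)$. The two hypotheses on $\tau,\tau'$ are exactly what is needed to make this work. The degree condition $\deg\tau + \deg\tau' = 2\dim Y + 2c_1(Y) - 2$ ensures that $I_1(\tau,\tau',h)_Y$ is the relevant (expected-dimension) Gromov-Witten number, so that the virtual class is the fundamental class and the quantum-to-classical formula $I_1(\tau,\tau',h)_Y = r_*s^*\tau \cup r_*s^*\tau' \cup r_*s^*h$ holds; the general-position hypothesis (writing $j_*\tau = [S]$, $j_*\tau' = [S']$ with $S,S'$ in general position in $X$) guarantees that the intersection-theoretic identities are transverse, so no excess-intersection corrections appear and the cycle-theoretic manipulations are legitimate. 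Then, exactly as in the proof of Theorem \ref{thm-def-pos}, I would use that the square
$$\xymatrix{Z_1(X) \ar[r]^-p & X \\
Z_1(Y) \ar[r]^-r \ar@{^(->}[u]^-i & Y \ar@{^(->}[u]_-j}$$
is Cartesian with $p$ flat (a locally trivial fibration, since $X = G/P$; cf.\ \cite[Proposition 2.3]{BCMP}), whence $i_*r^* = p^*j_*$ and therefore $s_*r^*\tau = q_*i_*r^*\tau = q_*p^*j_*\tau$, and likewise for $\tau'$. For the divisor $h$ one uses Lemma \ref{lemm-class} together with the fact that the restriction of $h$ to $Z_1(Y)$ comes from $h$ on $Z_1(X)$. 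Combining these gives $I_1(\tau,\tau',h)_Y = q_*p^*j_*\tau \cup q_*p^*j_*\tau' \cup q_*p^*h = I_1(j_*\tau,j_*\tau',h)_X$.

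The main obstacle I anticipate is checking that the birational maps $\pi$ and $\phi$ restrict to birational maps on $\Bl_1(Y) := \pi^{-1}(M_1(Y))$, and more precisely that $M_1(Y)$ has the expected dimension so that Proposition \ref{prop-class-virt} can be invoked on $Y$ itself; unlike the degree-$2$ case (where Proposition on $M_2(Y)$ is proved by a careful analysis of how the linear section cuts the quadrics $\Gamma_2$), here one must control how $L_k$ meets the linear spaces $\Gamma_1$, using again that $k < \dim\Gamma_2$ and the structure of lines through general points. A secondary technical point is the precise meaning of ``general position'': one must ensure that a general translate situation makes the relevant fibre products reduced of the expected dimension so that all the pushforward/pullback identities are honest equalities of cycle classes rather than merely equalities after passing to rational equivalence with multiplicities. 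Granting these, the chain of identities above is routine.
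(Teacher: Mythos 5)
Your proposal takes a genuinely different, and substantially heavier, route than the paper, and the step you flag as ``the main obstacle'' is in fact the crux that your approach leaves unresolved.

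The paper's proof is much more direct and does not invoke the degree-$1$ quantum-to-classical formalism at all. It starts from the elementary observation that a degree-$1$ stable map to $Y$ through $S$ and $S'$ is literally the \emph{same thing} as a degree-$1$ stable map to $X$ through $S$ and $S'$: since $S,S' \subset Y = X \cap L_k$, any line in $X$ meeting $S$ and $S'$ meets the linear space $L_k$ in at least two points, hence lies in $L_k$ and therefore in $Y$. The degree hypothesis says the two-point constraint $[S],[S']$ has the right total codimension in $\M_{0,2}(X,1)$, and the general-position hypothesis guarantees that the locus of such lines in $X$ is finite and reduced. This single transversality statement then does double duty: it identifies the invariant on the $X$ side with the naive count, and, because the moduli problem for $Y$ is literally the same scheme, it simultaneously establishes that $\M_{0,2}(Y,1)$ has the expected dimension near the relevant locus and gives the invariant on the $Y$ side as the \emph{same} count. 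That is the whole proof.

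By contrast, your plan reproves the quantum-to-classical formula for $M_1(X)$, sets up $Z_1(Y)=p^{-1}(Y)$, and then relies on $i_*r^*=p^*j_*$. This is all correct in principle, but you correctly identify that it hinges on $M_1(Y)$ having the expected dimension (and $\pi,\phi$ restricting birationally over $Y$). You leave that as a gap, whereas in the paper that is precisely the substance of the lemma, and it is extracted directly from the general-position hypothesis without any of the $Y_1,Z_1,\Bl_1$ scaffolding. If you filled your gap by the paper's observation (lines in $Y$ through $S,S'$ $=$ lines in $X$ through $S,S'$, finite and reduced by general position), you would have proved the lemma outright, making the push-pull machinery redundant. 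So the proposal is not wrong in outline, but as written it has a genuine gap at exactly the point where the paper's argument lives, and the chosen route is considerably longer than necessary.
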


\begin{proof}
Note that the equality on degrees is equivalent to $\codim_X S +
\codim_X S' = \dim X + c_1(X) -1$ and $\codim_Y S + \codim_Y S' = \dim
Y + c_1(Y) -1$. This together with the second condition imply the following property: the scheme of $2$-points degree $1$ stable maps to $X$ passing through $S$ and $S'$ is finite and reduced. Remark that any degree $1$ stable map to $Y$ passing through $S$ and $S'$ is a degree $1$ stable map to $X$ passing through $S$ and $S'$ and conversely since $Y$ is a linear section of $X$. In particular the scheme of degree $1$ stable maps to $Y$ passing through $S$ and $S'$ is finite and reduced. This implies that the moduli space of degree $1$ stable maps to $Y$ has the expected dimension and that the above number of stable maps is equal to both $I_1(\tau,\tau',h)_Y$ and $I_1(j_*\tau,j_*\tau',h)_X$.
\end{proof}

\begin{prop}
Let $a,b$ be integers in $[0,\dim Y]$ such that 
\begin{itemize}
\item $a + b = \dim Y + c_1(Y) - 1$.
\item $k \geq c_1(Y)$.
\end{itemize}
Then there exists basis of classes $\tau,\tau'$ in $j^*H^{2a}(X,\R)$ and $j^*H^{2b}(X,\R)$ such that the assumptions of Lemma \ref{lemm-pos-gen} are satisfied.
\end{prop}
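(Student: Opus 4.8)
The plan is to produce the required classes by hand, as general translates of Schubert varieties of $X$ cut down by the linear space $L_k$ defining $Y$, and to put them into general position using the $G$-action. Note first that the degree hypothesis of Lemma \ref{lemm-pos-gen} is automatic: for $\tau \in j^*H^{2a}(X,\R)$ and $\tau' \in j^*H^{2b}(X,\R)$ one has $\deg\tau + \deg\tau' = 2a + 2b = 2(\dim Y + c_1(Y) - 1) = 2\dim Y + 2 c_1(Y) - 2$. So the whole content is to exhibit, for each of the two degrees, a basis whose members $\tau$ are of the shape $j_*\tau = [S]$ with $S$ a subvariety of $Y$, the resulting $S$'s and $S'$'s being in general position in $X$.

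Write $X = G/P$ and, for a Schubert class $\s_u$ of codimension $a$, let $\Omega_u\subset X$ be the corresponding Schubert variety, $[\Omega_u]=\s_u$. Since $j^*$ is $\R$-linear the classes $j^*\s_u$ span $j^*H^{2a}(X,\R)$, so we may pick codimension-$a$ Schubert varieties $\Omega_{u_1},\dots,\Omega_{u_m}$ with $\tau_i := j^*\s_{u_i}$ a basis of $j^*H^{2a}(X,\R)$, and likewise codimension-$b$ Schubert varieties $\Omega_{v_1},\dots,\Omega_{v_{m'}}$ with $\tau'_\ell := j^*\s_{v_\ell}$ a basis of $j^*H^{2b}(X,\R)$. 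For $g,g'\in G$ general set $S_i := g\Omega_{u_i}\cap Y$ and $S'_\ell := g'\Omega_{v_\ell}\cap Y$. As $Y = X\cap L_k$ is a general linear section and $a,b\le\dim Y$, a general translate $g\Omega_{u_i}$ meets $Y$ properly and generically transversally, so $S_i$ is a pure-codimension-$a$ subvariety of $Y$ with $[S_i]_Y = j^*\s_{u_i} = \tau_i$ (in the extreme case $a=\dim Y$ one takes $S_i$ to be a single reduced point, which still represents $\tau_i$ since $\deg Y = \deg X$). The projection formula then gives $j_*\tau_i = j_*[S_i]_Y = [S_i]_X = \s_{u_i}\cup h^k$, and symmetrically $j_*\tau'_\ell = [S'_\ell]_X$.

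The last point is to check that, for a suitable $(g,g')$, every pair $S_i, S'_\ell$ lies in general position in $X$. The codimension count is favourable: $\codim_X S_i + \codim_X S'_\ell = (a+k)+(b+k) = (a+b)+2k = \dim X + c_1(X) - 1 > \dim X$, so the expected intersection is empty. The subtlety, and this is where the hypothesis $k\ge c_1(Y)$ makes itself felt, is that two general translates $g\Omega_{u_i}$ and $g'\Omega_{v_\ell}$ already meet inside $X$ along a locus of dimension $k - c_1(Y) + 1\ge 1$; thus emptiness of $S_i\cap S'_\ell = g\Omega_{u_i}\cap g'\Omega_{v_\ell}\cap Y$ does not follow from transversality on the homogeneous space $X$ alone, but uses crucially that $Y$ is a \emph{general} section. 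One argues with the incidence variety
$$\mathcal I_{i,\ell} = \{(g,g',y)\in G\times G\times Y \ | \ y\in g\Omega_{u_i}\cap g'\Omega_{v_\ell}\}.$$
Projecting it to $Y$ and using that $\{g\in G \ | \ g^{-1}y\in\Omega_{u_i}\}$ has dimension $\dim G - a$ for every $y$, one gets $\dim\mathcal I_{i,\ell} = \dim Y + 2\dim G - (a+b) = 2\dim G - c_1(Y) + 1 < 2\dim G$, the strict inequality holding because $c_1(Y)\ge 2$ ($Y$ being Fano with $2c_1(Y)>\dim Y$). Hence the image of $\mathcal I_{i,\ell}$ in $G\times G$ is a proper closed subset; choosing $(g,g')$ outside the union of these finitely many proper subsets and inside the finitely many dense open loci where all intersections with $Y$ are proper, all the $S_i$ and $S'_\ell$ are simultaneously defined, of the expected codimension, and pairwise disjoint, hence in general position in $X$. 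This verifies the hypotheses of Lemma \ref{lemm-pos-gen} for every pair $(\tau_i,\tau'_\ell)$.

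I expect the transversality step of the previous paragraph to be the main obstacle: one must control three subvarieties of $X$ at once — the two moving Schubert varieties together with the fixed general section $Y$ — rather than the usual two subvarieties of a single homogeneous space, precisely because $k\ge c_1(Y)$ prevents the two Schubert translates from becoming disjoint in $X$. Everything else (the degree identity, the spanning property of the restricted Schubert classes, and the representability $j_*\tau = [S]$) is routine.
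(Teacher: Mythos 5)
Your argument correctly handles the degree identity and the representability $j_*\tau_i = [S_i]_X$, but the transversality step — the one you yourself flag as the main obstacle — has a genuine gap. For Lemma \ref{lemm-pos-gen} to apply, ``general position in $X$'' must mean that the scheme of two-pointed degree-one stable maps to $X$ meeting both $S$ and $S'$ is finite and reduced, i.e.\ a Kleiman-transversality condition for the map $\ev_1\times\ev_2$ from $\Mb_{0,2}(X,1)$ to $X\times X$. Your incidence-variety computation proves only that $S_i\cap S'_\ell=\emptyset$ for general $(g,g')$; the inference ``pairwise disjoint, hence in general position in $X$'' is a non sequitur. Disjointness is necessary but far from sufficient, and since $S_i$ and $S'_\ell$ are both constrained to lie inside the same fixed $Y = X\cap L_k$, the independent $G\times G$-translation freedom you invoke does not place $(S_i,S'_\ell)$ in Kleiman-general position for the moduli of lines.

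The paper's proof inverts your strategy to get around exactly this. It first cuts the two Schubert translates by \emph{independent} linear spaces, $V = gS\cap L_k$ and $V' = g'S'\cap L'_k$; since $V$ and $V'$ then translate independently under $G$, Kleiman's theorem makes the general-position locus $O$ open and nonempty in the Hilbert-scheme closure $\mathcal{V}\times\mathcal{V}'$. The crux is a separate lemma asserting that the map $(gS,g'S',L_k)\mapsto(gS\cap L_k,\,g'S'\cap L_k)$ with a \emph{common} $L_k$ already dominates $\mathcal{V}\times\mathcal{V}'$, i.e.\ any such pair $(V,V')$ can be cut out by a single $L''_k$. This is reduced to the inequality $\dim\scal{gS}+\dim\scal{g'S'}\leq\dim W + k$ on linear spans in the Pl\"ucker embedding, verified case by case using minusculeness of $X$; the degenerate Case $a\in\{\dim Y,\dim Y-1\}$ is handled separately by a similar span argument. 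That lemma is the heart of the proof, is entirely absent from your argument, and is precisely what lets the common section $L_k$ be chosen \emph{after} the pair has been put in Kleiman-general position. Finally, the hypothesis ``$k\geq c_1(Y)$'' on which you build your narrative must be a misprint: the Proposition is applied in Corollary \ref{coro-rest} and Theorem \ref{thm-semisimple-Y} to hyperplane sections with $k=1$, and the paper's proof of the key lemma explicitly treats $\LG(3,6)$ with $k=1$. Your dimension-count explanation of that hypothesis is internally coherent but does not correspond to the argument actually needed.
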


\begin{remark}
Note that $a,b \geq c_1(Y) -1$. By the Hard Lefschetz Theorem and since $2 c_1(Y) > \dim Y$ we get $j^*H^{2a}(X,\R) = H^{2a}(Y,\R)$ except if $a = c_1(Y) -1$ and $\dim Y = 2 c_1(Y) -1$. 
\end{remark}

\begin{proof}
We may assume $a \geq b$. We shall construct general hyperplane sections $Y = X \cap L_k$ with $L_k$ a general linear subspace of codimension $k$ satisfying the proposition.

\textbf{Case 1.} We first deal with the case where $a = \dim Y \textrm{ or } \dim Y - 1$ \emph{i.e.} there is a unique class $\tau$ in $H^{2a}(Y,\R)$: the class $\pt$ or a point or the class $\ell$ of a line.

We prove that $\tau$ and the pull-back via $j$ of the Schubert basis in $H^{2b}(X,\Z)$ satisfy the proposition. Let $S$ be a Schubert variety of codimension $b$ in $X$ and $S'$ be a point or a line (depending on whether $a = \dim Y$ or $a = \dim Y -1$). Let $\mathcal{S}$ be family of $G$-translates of $S$ and $\mathcal{T}$ the family of $G$-translates of $T$. Let $\mathcal{L}_k$ be the variety parametrising linear subspaces of codimension $k$ in the Pl\"ucker embedding of $X$. We have a rational morphism $f:G \times \mathcal{L}_k \to \mathcal{H}$ where $\mathcal{H}$ is a Hilbert scheme of subvarieties in $X$ defined by $(g,L_k) \mapsto gS \cap L_k$. Let $\mathcal{V}$ be the closure of its image.

We consider $I = \{(V,g'T,L_k) \in \mathcal{V} \times \mathcal{T} \times \mathcal{L}_k \ | \ V \subset  L_k \supset g'T\}$. We have a diagram
$$\xymatrix{I \ar[r]^-p \ar[d]_-q & \mathcal{V} \times \mathcal{T} \\
\mathcal{L}_k. & \\}$$
Since $b = \dim Y + c_1(Y) -1 -a$, the Schubert variety $S$ is contained in a linear section of $X$ of codimension $\dim Y + c_1(Y) -1 -a$ and thus $V = gS \cap L_k$ is contained in a linear section of $X$ of codimension $\dim Y + c_1(Y) -1 - a + k$. In particular since the space of $T$ has dimension $\dim Y - a$, the variety $V \cup g'T$ is contained in a linear section of codimension $c_1(Y) -1 + k \geq k$. This proves that the map $p$ is surjective. The map $q$ is also surjective: for $L_k \in \mathcal{L}_k$ pick for $g'T$ a point or a line in $X \cap L_k$, pick $g \in G$ general and set $V = g S \cap L_k$. 

In particular, for $L_k \in \mathcal{L}_k$ general, there is a translate $gS$ and a point or a line $T \subset X \cap L_k$ such that $V = gS \cap L_k$ and $T$ are in general position in $X$. Setting $Y = X \cap L_k$ proves the result.

\textbf{Case 2.} We now consider the other cases. We prove that the pull-backs via $j$ of the Schubert basis in $H^{2a}(X,\R)$ and $H^{2b}(X,\R)$ satisfy the proposition. Let $S$ and $S'$ be Schubert varieties in $X$ of respective codimension $a$ and $b$. Denote by $\mathcal{S}$ and $\mathcal{S}'$ the family of $G$-translates of $S$ and $S'$. These are projective homogeneous spaces since the stabiliser of $S$ and $S'$ contain a Borel subgroup. Let $\mathcal{L}_k$ be the projective variety of all linear subspaces of codimension $k$ in the Pl\"ucker embedding of $X$. 

We have a rational morphism $f:\mathcal{S} \times \mathcal{S}' \times \mathcal{L}_k \times  \mathcal{L}_k \to \mathcal{H} \times \mathcal{H}'$, where $\mathcal{H}$ and $\mathcal{H}'$ are Hilbert schemes of subvarieties in $X$, defined by $(gS,g'S',L_k,L_k') \mapsto (gS \cap L_k, g'S' \cap L_k')$. Let $\mathcal{V} \times \mathcal{V'}$ be the closure of the image of $f$.

\begin{lemma}
Let $\Delta : \mathcal{S} \times \mathcal{S}' \times \mathcal{L}_k \to \mathcal{S} \times \mathcal{S}' \times \mathcal{L}_k \times \mathcal{L}_k$ be the map induced by the diagonal embedding of $\mathcal{L}_k$. Then $\mathcal{V} \times \mathcal{V'}$ is the image of $f \circ \Delta$.
\end{lemma}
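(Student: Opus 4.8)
The plan is to show that the image of $f\circ\Delta$ is dense in $\mathcal V\times\mathcal V'$; since that image is contained in the image of $f$, whose closure is $\mathcal V\times\mathcal V'$ by definition, this is precisely the assertion (the images being understood up to closure, as when $\mathcal V\times\mathcal V'$ was introduced). First I would record that the two factors of $f$ involve disjoint sets of variables, $f(gS,g'S',L_k,L_k')=(gS\cap L_k,\ g'S'\cap L_k')$, so that the image of $f$ is the product of the image of the rational map $(gS,L_k)\mapsto gS\cap L_k$ with the image of the analogous $(g'S',L_k')\mapsto g'S'\cap L_k'$; thus $\mathcal V$ and $\mathcal V'$ are the closures of these two images, and both are irreducible since $\mathcal S\times\mathcal L_k$ is.

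The key step is to take a general pair $(V,V')\in\mathcal V\times\mathcal V'$ and exhibit one $L\in\mathcal L_k$, together with translates $\gamma S$ and $\gamma'S'$, with $\gamma S\cap L=V$ and $\gamma'S'\cap L=V'$; this puts $(V,V')$ in the image of $f\circ\Delta$, and letting $V,V'$ (equivalently $\gamma,\gamma',L$) vary over their general loci then covers a dense subset of $\mathcal V\times\mathcal V'$. Writing $\langle V\rangle$ and $\langle V'\rangle$ for the linear spans in the ambient projective space of the Pl\"ucker embedding, any such $L$ must contain $\langle V\rangle+\langle V'\rangle$. Conversely, for $V$ general one has $V=gS\cap L_0$ for some codimension $k$ subspace $L_0\supseteq\langle V\rangle$, so ``$gS\cap L=V$'' is a nonempty open condition on the irreducible family of codimension $k$ subspaces through $\langle V\rangle$; using that the linear sections involved are proper and, in Case~2 where $\dim V,\dim V'\ge2$, irreducible, I would check that it stays nonempty on the subfamily of those through $\langle V\rangle+\langle V'\rangle$, and likewise for $V'$. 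A general member $L$ of that subfamily then works, provided the subfamily is nonempty, i.e.\ provided $\codim\bigl(\langle V\rangle+\langle V'\rangle\bigr)\ge k$.

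So the hard part will be this inequality for general $(V,V')$. I would argue it as follows: since $S$ is a Schubert variety, $L_k$ is general, and $\dim V\ge2$ in Case~2, the $k$-fold general hyperplane section $V$ of a translate of $S$ stays non-degenerate in $\langle S\rangle\cap L_k$, so $\dim\langle V\rangle=\dim\langle S\rangle-k$; likewise $\dim\langle V'\rangle=\dim\langle S'\rangle-k$, and after replacing $S'$ by a general $G$-translate $\dim(\langle V\rangle+\langle V'\rangle)=\dim\langle V\rangle+\dim\langle V'\rangle+1$. Plugging in the numerical hypotheses ($a+b=\dim Y+c_1(Y)-1$, $2c_1(Y)>\dim Y$, $k\ge c_1(Y)$, i.e.\ $2k\ge c_1(X)$), the inequality becomes a bound on $\dim\langle S\rangle+\dim\langle S'\rangle$ for Schubert varieties of codimensions $a$ and $b$. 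Since $\p^n$ and $Q_n$ fall entirely under Case~1, this has to be checked only for the remaining homogeneous spaces of Table~1 and the admissible values of $k$ from Remark~\ref{rem-def-pos}, Schubert class by Schubert class --- a finite verification encoding the fact that linear spans of Schubert subvarieties of these minimal embeddings are not too large, in the spirit of the small-linear-span phenomena used throughout Section~\ref{section-QX}. That case check is the real obstacle; the density reduction, the independence of the two factors of $f$, and the properness/irreducibility dimension counts are routine.
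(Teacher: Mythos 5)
Your proposal arrives at the same reduction as the paper: for general $(gS,L_k)$ and $(g'S',L'_k)$, one must exhibit a single codimension--$k$ linear space $L''_k$ cutting both sections out, and this comes down to the span inequality
$\dim \scal{gS} + \dim \scal{g'S'} \leq \dim W + k$,
where $W$ is the ambient Pl\"ucker space. Two of your intermediate claims are stronger than needed and not clearly correct in all cases: the identity $\dim\scal{V} = \dim\scal{S}-k$ (nondegeneracy of a general $k$-fold linear section inside $\scal{S}\cap L_k$) and the generic disjointness of $\scal{V}$ and $\scal{V'}$. Neither is required: the paper only uses the trivial inequalities $\dim\scal{gS\cap L_k}\leq\dim\scal{gS}-k$ for generic $L_k$ and $\dim(\scal{V}+\scal{V'})\leq\dim\scal{V}+\dim\scal{V'}$, which hold without any further genericity assumption on the translates. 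So up to these superfluous (and potentially wrong) equalities, the geometric skeleton matches.

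The genuine gap is that you declare the span inequality ``the real obstacle'' and gesture at ``small-linear-span phenomena'' without giving any means of computing or bounding $\dim\scal{S}$. This is precisely where the paper's key technical input lies. After disposing of $\LG(3,6)$ (where $k=1$ forces $a\geq\dim Y-1$, so one is in Case 1), all remaining $X$ in the list are minuscule, so the weights of the Pl\"ucker representation $W$ form a single Weyl orbit. This yields a bijection between Schubert varieties and weights and hence the clean combinatorial formula $\dim\scal{S} = \#\{\textrm{Schubert subvarieties of } S\}$. Only with this does the span inequality become a finite poset computation. Without it, your reduction is an (essentially correct) restatement of the lemma, not a proof.
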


Assume that the lemma holds. Let $O$ be the open subset in $\mathcal{V} \times \mathcal{V'}$ of subvarieties in general position and let $\mathcal{L}_k^\circ$ be the open non empty subset in $\mathcal{L}_k$ of linear subspaces having a smooth intersection with $X$. Then $(f \circ \Delta)^{-1}(O)$ is non empty and open in $\mathcal{S} \times \mathcal{S}' \times \mathcal{L}_k$ as well as ${\rm pr}_3^{-1}(\mathcal{L}_k^\circ)$ where ${\rm pr}_3$ is the projection on the third factor in $\mathcal{S} \times \mathcal{S}' \times \mathcal{L}_k$. Since $\mathcal{S} \times \mathcal{S}' \times \mathcal{L}_k$ is irreducible, these subsets intersect. Let $(gS,g'S',L_k)$ be in the intersection. Then $Y = X \cap L_k$ and $V = Y \cap gS$, $V' = Y \cap g'S'$ satisfy the desired property since $V$ and $V'$ are in general position in $X$.

We are left to proving the lemma. Equivalently we need to prove that for $(gS,L_K) \in \mathcal{S} \times \mathcal{L}_k$ and $(g'S',L_k') \in \mathcal{S}' \times \mathcal{L}_k$ such that $\codim_{gS} gS \cap L_k = k = \codim_{g'S'} g'S' \cap L'_k$, there exists $L_k'' \in \mathcal{L}_k$ such that $gS \cap L_k \subset L_k'' \supset g'S' \cap L'_k $. Let $W$ be the vector space defining the Pl\"ucker embedding and let $\scal{gS}$ and $\scal{g'S'}$ be the spans in $W$ of element whose classes are in $gS$ and $g'S'$. It is enough to prove that $\dim \scal{gS} + \dim \scal{g'S'} \leq \dim W + k$. We prove this inequality by case by case analysis. 

For $X = \LG(3,6)$, we have $k = 1$ and $\dim Y + c_1(Y) -1 = 7$ thus $a \geq \dim Y -1$ and there is nothing to prove.

Note that in the other cases $X$ is a minuscule homogeneous space. This means that the weights of $W$ for a maximal torus form a unique orbit for the action of the Weyl group of $G$. This in particular implies that there is a correspondence between Schubert varieties and weights of $W$. As a consequence we get the equality
$$\dim \scal{S} = |\{S'' \subset S \ | \ \textrm{$S''$ a Schubert variety}\}|.$$
In words: the dimension of the span $\scal{S}$ of a Schubert variety $S$ is equal to the number of Schubert varieties contained in $S$. This translates the inequality $\dim \scal{gS} + \dim \scal{g'S'} \leq \dim W + k$ into a combinatorial computation and an easy case by case check gives the result.
\end{proof}

We shall now define maps between subspaces of $\QH(Y)$ and $\QH(X)$. Recall that we have morphisms $j^* : H^m(X,\Z) \to H^m(Y,\Z)$ and $j_* : H_m(X,\Z) \to H_m(X,\Z)$ which become isomorphisms for $m < \dim Y$ by Lefschetz Theorem. Let $A(Y)$ and $A(X)$ be the algebras obtained from $\QH(Y)$ and $\QH(X)$ by quotienting with the ideal $(q - 1)$. Recall that these algebras are respectively $\Z/2 c_1(Y)\Z$ and $\Z/2 c_1(X)\Z$ graded. We write $A_a(Y)$ and $A_a(X)$ for their degree $a$ graded piece. We have
$$\begin{array}{rll}
A_a(X) = & H^a(X,\R) \oplus H^{2 c_1(X) + a}(X,\R) & \textrm{for $a \in [0, 2 \dim Y - 2 c_1(Y)]$} \\
A_a(Y) = & H^a(Y,\R) \oplus H^{2 c_1(Y) + a}(Y,\R) & \textrm{for $a \in [0, 2 \dim Y - 2 c_1(Y)]$} \\
A_{2 c_1(Y) -2}(X) = & H^{2 c_1(Y) - 2}(X,\R) & 
\textrm{for $2 c_1(Y) - 1 > \dim Y$} \\
A_{2c_1(Y) - 2}(Y) = & H^{2 c_1(Y) - 2}(Y,\R) & 
\textrm{for $2 c_1(Y) - 1 > \dim Y$} \\
A_{2 c_1(Y) -2}(X) = & H^{2 c_1(Y) - 2}(X,\R) \oplus H^{2 \dim (X)}(X,\R) & 
\textrm{for $2 c_1(Y) - 1 = \dim Y$} \\
A_{2c_1(Y) - 2}(Y) = & H^{2 c_1(Y) - 2}(Y,\R) \oplus H^{2 \dim (Y)}(Y,\R) & 
\textrm{for $2 c_1(Y) - 1 = \dim Y$} \\
\end{array}$$

We define a morphism $J$ between these spaces as follows:
$$\begin{array}{rll}
J = j^* \oplus j_*^{-1} : & A_a(X) \to A_a(Y) & \textrm{for $a \in [2 c_1(Y), 2 \dim Y]$} \\
J = {j^*} : & A_{2c_1(Y)-2}(X) \to A_{2c_1(Y)-2}(Y) & \textrm{for $2 c_1(Y) - 1 > \dim Y$} \\
J = {j^* \oplus j_*^{-1}} : & A_{2 c_1(Y) -2}(X) \to A_{2c_1(Y) - 2}(Y) & \textrm{for $2 c_1(Y) - 1 = \dim Y$} \\
\end{array}
$$
Note that by Lefschetz Theorem and because $2 c_1(Y) > \dim Y$, the first map is an isomorphism for all $a \in [2 c_1(Y), 2 \dim Y]$.

\begin{cor}
\label{coro-rest}
For all $a \in [2 c_1(Y), 2 \dim Y - 2]$, we have a commutative diagrams
$$\xymatrix{
A_a(X) \ar[r]^-J_-\sim \ar[d]_-{E_h^X} & A_a(Y) \ar[d]^-{E_h^Y} \\
A_{a+2}(X) \ar[r]^-J_-\sim & A_{a+2}(Y)} \ \ \textrm{ and } \ \ \xymatrix{
A_{2c_1(Y)-2}(X) \ar[r]^-J \ar[d]_-{E_{h^{k+1}}^X} & A_{2c_1(Y)-2}(Y) \ar[d]^-{E_{h}^Y} \\
A_0(X) \ar[r]^-J_-\sim & A_0(Y).}$$
\end{cor}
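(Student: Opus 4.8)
The plan is to compare the two endomorphisms $E_h^X$ and $E_h^Y$ degree-by-degree through the map $J$, using the quantum-to-classical comparison of Gromov--Witten invariants established in the previous proposition. First I would unwind the definitions: in the relevant range the graded pieces $A_a(X)$ and $A_a(Y)$ consist of (at most) two classical cohomology groups, and on such classes the product $\starz$ with $h$ has the form $h \starz \sigma = h \cup \sigma + q\,(\text{lower-degree piece obtained from } I_1(\sigma,-,h))$, by dimension reasons together with the fact that the index is large ($2c_1 > \dim$). So to prove each square commutes it suffices to check separately that (i) $J$ intertwines the classical cup product with $h$, and (ii) $J$ intertwines the quantum correction terms, i.e. the $I_1(-,-,h)$ contributions.

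For (i) I would invoke Lemma \ref{lemm-class}, which gives $j_*(h\cup\tau)=h\cup j_*\tau$; dualizing (and using that $j_*$, $j^*$ are mutually inverse isomorphisms in the range $m<\dim Y$ by the Lefschetz theorem) yields $j^*(h\cup\sigma)=h\cup j^*\sigma$. This handles the cup-product part of both squares, including the corner map $E_{h^{k+1}}^X$ versus $E_h^Y$: here one uses that $h^{k}$ maps $H^{2c_1(Y)-2}(X)$ isomorphically onto $H^{2c_1(X)-2}(X)$ (Hard Lefschetz on $X$) so that $E_{h^{k+1}}^X$ is literally $E_h^X$ composed with multiplication by $h^k$, and the target identification $J=j^*\oplus j_*^{-1}$ on $A_0$ is compatible with this. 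For (ii) I would appeal to Lemma \ref{lemm-pos-gen} together with the preceding Proposition: that Proposition produces bases $\tau,\tau'$ of $j^*H^{2a}(X,\R)$ and $j^*H^{2b}(X,\R)$ whose Poincaré-dual cycles sit in general position in $X$, and then Lemma \ref{lemm-pos-gen} gives $I_1(\tau,\tau',h)_Y=I_1(j_*\tau,j_*\tau',h)_X$. Reading this as an identity of coefficients when $h\starz(-)$ is expanded in a Poincaré-dual basis, and using that $J$ is precisely $j^*$ (equivalently $j_*^{-1}$) on the relevant pieces, it says exactly that the quantum correction of $E_h^Y$ on $J(\sigma)$ equals $J$ applied to the quantum correction of $E_h^X$ on $\sigma$. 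Combining (i) and (ii) gives $E_h^Y\circ J = J\circ E_h^X$ on each graded piece, and likewise $E_h^Y\circ J = J\circ E_{h^{k+1}}^X$ on the corner.

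The main obstacle I expect is bookkeeping rather than substance: one must verify that the hypotheses of Lemma \ref{lemm-pos-gen} are actually met for \emph{all} degrees $a\in[2c_1(Y),2\dim Y-2]$ appearing in the diagram — in particular the degree condition $\deg\tau+\deg\tau'=2\dim Y+2c_1(Y)-2$ and the constraint $k\ge c_1(Y)$ in the preceding Proposition — and to check the boundary cases (the corner square, and the range where $2c_1(Y)-1=\dim Y$ versus $2c_1(Y)-1>\dim Y$) where the graded pieces acquire or lose the $H^{2\dim}$ summand and one must confirm $J$ is still defined consistently (using $j_*^{-1}$ only where Lefschetz guarantees it is an isomorphism). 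A secondary point to be careful about is that $j^*$ need not be surjective when $a=c_1(Y)-1$ and $\dim Y=2c_1(Y)-1$ (noted in the Remark), so the statement is phrased with $j^*H^{2a}(X,\R)$ rather than $H^{2a}(Y,\R)$, and one should track which of the two squares in the corollary that subtlety affects — it is exactly the corner square, where the source is $A_{2c_1(Y)-2}(X)$ and $J$ is only a (possibly non-iso) map, which is why the corollary does not decorate that $J$ with $\sim$. Once these cases are organized, each square reduces to the two intertwining identities above and the proof is immediate.
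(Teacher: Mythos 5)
Your overall architecture matches the paper's proof: decompose $h\starz(-)$ into its classical part (handled via Lemma \ref{lemm-class} / functoriality of $j^*$) and its quantum correction (handled via Lemma \ref{lemm-pos-gen} applied to the bases produced by the preceding Proposition), then track the cases coming from the two summands of each $A_a$. However, there is one genuine gap. You assert that on the relevant graded pieces the product has the form
$h\starz\sigma = h\cup\sigma + q\cdot(\text{term from }I_1(\sigma,-,h))$
"by dimension reasons together with $2c_1>\dim$." This is false in precisely the most delicate subcase of the corner square: when $\dim Y = 2c_1(Y)-1$ and $\sigma=\pt$, the class $h\starz\pt$ lives in quantum degree $2\dim Y+2 = 4c_1(Y)$, which is $q^2\cdot H^0(Y)\oplus q\cdot H^{2c_1(Y)}(Y)$, so a $q^2$ term with coefficient $I_2(h,\pt,\pt)$ appears. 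That term is not controlled by Lemma \ref{lemm-pos-gen} (which compares $I_1$ invariants); the paper handles it by a separate degree-$2$ comparison, namely the identity $I_2(\tau,\tau',\tau'')_Y = I_2(j_*\tau,j_*\tau',j_*\tau'')_X$ recorded in Remark \ref{rem-thm-def-pos} (proved during the proof of Theorem \ref{thm-def-pos} via the birational parametrization of $M_2(Y)$). Your proof would stall at this point unless you explicitly invoke that $I_2$-comparison, which you do not. Apart from this, a minor imprecision: $j_*$ and $j^*$ are not mutually inverse in any degree range (the composite $j_*j^*$ is cup product with $[Y]=h^k$ by the projection formula); what is actually used is only that $j^*$ is a ring homomorphism (giving $j^*(h\cup\sigma)=h\cup j^*\sigma$) and the projection-formula identity $j_*(h\cup\tau)=h\cup j_*\tau$. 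This doesn't break your argument, but the paper's treatment of the corner square goes through $h^k=[Y]$ and $j_*j^*\sigma=[Y]\cup\sigma$ rather than "Hard Lefschetz on $X$" as you suggest.
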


\begin{proof}
For a quantum product $a \starz b$ we write $a \starz b = \sum_d q^d (a \starz b)_d$.

We start with the first square. Let $\sigma \in A_a(Y)$. We have $\deg \s = a$ or $\deg \s = 2c_1(X) + a$. In the first case $J\s = j^*\s$ and $\deg J\s + 2 = \deg \s + 2 < 2 c_1(Y) < 2 c_1(X)$. In particular $E_X(\s) = h \cup \s$ and $E_h^Y(Js) = h \cup J\s$ and we get $E_h^YJ(\s) = h \cup j^*\s = j^*(h \cup \s) = JE_h^X(\s)$. In the second case we have $\sigma = j_* \tau$ with $\tau = J\sigma$. We get $JE_h^X(\s) = J(h \starz j_*\tau)$ and $E_h^YJ(\s) = E_h^Y(\tau) = h \starz \tau$. By Lemma \ref{lemm-class} we have $j_*(h \starz \tau)_0 = (h \starz j_*\tau)_0$ so the result is true for the classical part of the quantum product. The quantum parts of $E_h^YJ(\s)$ and $JE_h^XJ(\s)$ are of the form 
$$(h \starz \tau)_1 = I_1(h,\tau,\ell)_Y q h \textrm{ and } J((h \starz j_*\tau)_1) = I_1(h,j_*\tau,\ell)_X qh.$$ 
Since $2 c_1(Y) > \dim Y$, the Hard Lefschetz Theorem implies that there is a $\sigma''$ with $\tau = j^*\sigma''$. Applying the above proposition, we get $I_1(h,\tau,\ell)_Y = I_1(h,j_*\tau,\ell)_X$ proving the result.

We now consider the second square. The possible degrees for $\s \in A_{2 c_1(Y) - 2}(X)$ are $2 c_1(Y) - 2$ or $2 \dim X$ if $\dim Y = 2c_1(Y) - 1$. First assume $\deg \s = 2 c_1(Y) -2$ and let $\tau = J(\s) = j^* \sigma$. We have $JE_{h^{k+1}}^X(\s) = J(h^{k+1} \starz \sigma)$ and $E_h^YJ(\s) = h \starz \tau$. For degree reasons, we have $h^k = [Y]$ and $[Y] \starz \sigma = [Y] \cup \sigma$. We get $E_{h^{k+1}}^X(\sigma) = h^{k+1} \starz \sigma = h \starz ([Y] \cup \sigma) = h \starz j_*j^* \sigma$. By Lemma \ref{lemm-class}, we have $j_*(h \starz \tau)_0 = j_*(h \cup \tau) = h \cup j_*\tau = h \cup j_*j^*\sigma$ and the result is true for the classical part of the quantum product. The quantum part of $E_h^YJ(\s)$ and $JE_{h^{k+1}}^X(\s)$ are of the form 
$$(h \starz \tau)_1 = I_1(h,\tau,\pt)_Y q \textrm{ and } J((h \starz j_*\tau)_1) = I_1(h,j_*\tau,\pt)_X q.$$ 
By the above proposition, we get $I_1(h,\tau,\pt)_Y = I_1(h,j_*\tau,\pt)_X$ proving the result for $\deg \s = 2 c_1(Y) -2$.

Finally assume $\dim Y = 2 c_1(Y) -1$ and $\deg \s = 2 \dim X$. We have $\s = \pt$ and $J(\s) = \pt$. We get $JE_{h^{k+1}}^X(\s) = J(h^{k+1} \starz \pt)$ and $E_h^YJ(\s) = h \starz \pt$. We have $h \starz \pt = q \sum_\gamma I_1(h,\pt,\gamma)_Y {\rm PD}(\gamma) + I_2(h,\pt,\pt)_Y q^2$
where the sum runs over a basis of $H^{2 c_1(Y) - 2}(Y,\R)$. By Lefschetz Theorem the classes $\delta$ with $J \delta = j^*\delta = \gamma$ form a basis of $H^{2 c_1(Y) - 2}(X,\R)$. By the above proposition, we have $I_1(h,\pt,\gamma)_Y = I_1(h,\pt,j_*\gamma)_X$. On the other hand, applying Remark \ref{rem-thm-def-pos} we have $I_2(h,\pt,\pt)_Y = I_2(j_*h,\pt,\pt)_X$. We get
$$\begin{array}{rl}
E_h^YJ(\s) = & q \sum_\sigma I_1(h,\pt,j_*j^*\delta)_X {\rm PD}(j^*\delta) + I_2(j_*h,\pt,\pt)_X q^2 \\
= & q \sum_\sigma I_1(h,\pt,[Y] \cup \delta)_X j^*{\rm PD}(\delta) + I_2([Y] \cup h,\pt,\pt)_X q^2 \\
 = & q \sum_\sigma I_1(h,\pt,[Y] \starz \delta)_X j^*{\rm PD}(\delta) + I_2([Y] \starz h,\pt,\pt)_X q^2 \\ 
 = & q \sum_\sigma I_1([Y] \starz h,\pt,\delta)_X j^*{\rm PD}(\delta) + I_2(h^{k+1},\pt,\pt)_X q^2\\
 = & q j^*\sum_\sigma I_1(h^{k+1},\pt,\delta)_X {\rm PD}(\delta) + I_2(h^{k+1},\pt,\pt)_X q^2\\
 = & JE_{h^{k+1}}^X(\pt) \end{array}
$$
This completes the proof.
\end{proof}

\begin{remark}
For $\dim Y < 2 c_1(Y) -2$ using the same arguments as above we get a slightly better factorisation. We have an isomorphism $J$ defined by
$$J = j_*^{-1} : A_{-2}(X) = H^{2 c_1(X) - 2}(X,\R) \to A_{-2}(Y) = H^{2 c_1(Y) - 2}(Y,\R).$$ 
For $\dim Y < 2 c_1(Y) - 2$, we have a commutative diagram
$$\xymatrix{A_{-2}(X) \ar[r]^-J_-\sim \ar[d]_-{E_h^X} & A_{-2}(Y) \ar[d]^-{E_h^Y} \\
A_0(X) \ar[r]^-J_-\sim & A_0(Y).}$$
\end{remark}

\subsection{Semisimple small quantum cohomology}

In this subsection, we prove the following semisimplicity result.

\begin{thm}
\label{thm-semisimple-Y}
Let $Y$ be a general hyperplane section with $2 c_1(Y) > \dim Y$ of the following homogeneous space $X$:
$$\begin{array}{llll}
\Gr(2,2n+1) & & & F_4/P_1 \\
\OG(5,10) & & & \OG(2,2n+1) \\
\LG(3,6) & & & G_2/P_1.\\
\end{array}$$
Then $\QH(Y)$ is semisimple.
\end{thm}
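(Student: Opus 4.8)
The strategy is to reduce the semisimplicity of $\QH(Y)$ to the semisimplicity of the subalgebra of $A(X)$ generated by the hyperplane class $h$, which can be checked by hand, and then transport that information back to $Y$ via the isomorphisms $J$ of Corollary \ref{coro-rest}. First I would record that for each listed $X$ the form $Q_Y$ is positive definite: this is exactly the content of Theorem \ref{thm-def-pos} (note that $\Gr(2,2n+1)$, $\OG(5,10)$, $\LG(3,6)$ occur in the cominuscule list and a hyperplane section of $\O\P^2=E_6/P_6$ is $F_4/P_1$, while $\OG(2,2n+1)$ and $G_2/P_1$ are adjoint varieties covered by Proposition \ref{prop-def-pos}, for which $Q_X=Q_Y$ since they are already hyperplane-type). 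Thus Theorem \ref{thm-alg-qh} applies and it suffices, by part 2 of that theorem, to prove $\Ker E_h^Y=0$, or, failing that, to use part 3: to show $A_0(Y)=A_0(Y)\cap\Ker E_h^Y\oplus A_0(Y)\cap\im E_h^Y$ and that the subalgebra generated by $h$ is semisimple and fills all of $A(Y)$.

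The key step is the translation into $A(X)$. By Corollary \ref{coro-rest} the maps $J$ give, for $a\in[2c_1(Y),2\dim Y-2]$, isomorphisms intertwining $E_h^Y$ with $E_h^X$, and in the bottom degree they intertwine $E_h^Y$ with $E_{h^{k+1}}^X$ (here $k=1$ since $Y$ is a hyperplane section, so this is $E_{h^2}^X$); together with the companion diagram of the final Remark when $\dim Y<2c_1(Y)-2$. Concatenating these squares, the operator $E_h^Y$ on $A(Y)$ is conjugate — degree by degree, and compatibly with the grading shift — to the operator on $A(X)$ which is $E_h^X$ on most graded pieces and $E_{h^2}^X$ on the single transitional piece. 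Hence the whole cyclic $\R[h]$-module structure of $A(Y)$ is determined by that of $A(X)$, and in particular $\Ker E_h^Y$, $\im E_h^Y$ and the minimal polynomial of $E_h^Y$ are computed from the corresponding data for the action of $h$ on $\QH(X)$. So the problem becomes: for $X$ one of the six homogeneous spaces, describe the quantum multiplication by $h$ on $\QH(X)$ explicitly (these are all classically known presentations: $\QH(\Gr(2,n))$ via Schubert/Giambelli, $\QH(Q_n)$, $\QH(\OG(5,10))$, $\QH(\LG(3,6))$, and the adjoint cases from \cite{adjoint}), and read off that the pieces in the relevant degree range, after the $h\mapsto h^2$ modification in one degree, assemble into an endomorphism of $A(Y)$ with simple minimal polynomial (equivalently with trivial kernel when $h$ is already invertible in $\QH(X)$, which happens precisely for $X$ a projective space or quadric, and otherwise with a single Jordan block of size one at $0$ so that part 3 applies).

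Concretely I would go case by case. For $X=Q_n$ (and $\p^n$, already handled elsewhere) $h$ is invertible in $\QH(X)$ so $E_h^Y$ is invertible and part 2 of Theorem \ref{thm-alg-qh} gives semisimplicity directly. For the remaining four, $h$ is not invertible in $\QH(X)$ — its kernel is spanned by a primitive/Schubert class in middle degree — but one checks that (i) after replacing $E_h^X$ by $E_{h^2}^X$ on the transitional graded piece the resulting operator on $A(Y)$ still has its powers eventually stabilizing with a rank-one kernel complemented by the image, and (ii) the subalgebra $\R[h]\subset A(Y)$ is all of $A(Y)$ and is reduced because the minimal polynomial of $E_h^Y$ has only simple complex roots; these are finite computations with the known quantum Chevalley formula on $\QH(X)$ restricted to the at-most-two-graded-layer structure of $A(Y)$. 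I expect the main obstacle to be precisely the bottom-degree square with $E_{h^2}^X$: one must verify that squaring $h$ there does not create a nilpotent of order $>1$ in $A(Y)$ — i.e. that the $0$-eigenspace of $E_h^Y$ does not pick up Jordan blocks from the transition — and that the two-dimensional transitional graded piece $A_{2c_1(Y)-2}(Y)$ when $\dim Y=2c_1(Y)-1$ (the $\Gr(2,2n+1)$, $\OG(2,2n+1)$, $G_2/P_1$ cases, where it equals $H^{2c_1(Y)-2}(Y)\oplus H^{2\dim Y}(Y)$) is handled correctly by the last line of Corollary \ref{coro-rest}; once that is confirmed, Theorem \ref{thm-alg-qh}(3) yields that $\R[h]=A(Y)$ is semisimple, hence $\QH(Y)$ is semisimple.
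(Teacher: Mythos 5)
You correctly lay out the first half of the argument — positivity of $Q_Y$ via Theorem~\ref{thm-def-pos} and Proposition~\ref{prop-def-pos}, then Theorem~\ref{thm-alg-qh} plus Corollary~\ref{coro-rest} to transport the operator $E_h^Y$ to data about $E_h^X$ — and this matches the paper's route. (One small slip: $F_4/P_1$ is the \emph{adjoint} variety of $F_4$ and is not a hyperplane section of $E_6/P_6$; that would be $F_4/P_4$, which appears in Theorem~\ref{thm-bqh2}, not here. But your conclusion that the second-column spaces are covered by Proposition~\ref{prop-def-pos} with $Y=X$ is still right.)

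The endgame, however, is wrong in a way that matters. You assert that $h$ is invertible in $\QH(X)$ ``precisely for $X$ a projective space or a quadric'' — neither of which is even in the list of this theorem — and therefore fall back on part~3 of Theorem~\ref{thm-alg-qh} for the remaining cases. This is backwards: for \emph{all six} spaces $X$ in the statement, $h$ \emph{is} invertible in $\QH(X)_{q=1}$, equivalently $E_h^X\colon A_{-2}(X)\to A_0(X)$ is surjective. (Sample: in $\QH(\Gr(2,5))$ one has $\sigma_1\starz(\sigma_{3,1}-\sigma_{2,2})=q$, so $h$ is a unit.) That verification, via the quantum Chevalley formulas of \cite{cmp1} and \cite{adjoint}, is precisely the ``easy check'' the paper appeals to. Once it is done, the second square of Corollary~\ref{coro-rest} (with $E_{h^{k+1}}^X$, which is onto because $h$ is a unit in $A(X)$) gives $E_h^Y$ surjective onto $A_0(Y)$, hence $h$ is a unit in $A(Y)$, hence $\Ker E_h^Y=0$, and Theorem~\ref{thm-alg-qh}(2) concludes directly. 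The more delicate bookkeeping with eigenvalues and Jordan blocks that you anticipate is not needed.

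Moreover the fallback you reach for cannot close the argument even if it were needed. Theorem~\ref{thm-alg-qh}(3) yields only that the \emph{subalgebra generated by $h$} is semisimple, not that $A(Y)$ is. Your claim ``$\R[h]=A(Y)$'' is asserted without justification and is generally false: already $H^*(\Gr(2,n))$ is not generated by $\sigma_1$ alone (one needs $\sigma_2$ or $\sigma_{1,1}$ since $\sigma_1^2=\sigma_2+\sigma_{1,1}$), and passing to $\QH$ does not change this. The spaces for which $h$ is genuinely not invertible belong to Proposition~\ref{prop-loc-rad} and Section~5 ($\Gr(2,2n)$, $E_6/P_6$, $\OG(2,2n)$, \dots); there part~3 gives only a description of the radical, and indeed $\QH(\IG(2,2n))$ and $\QH(F_4/P_4)$ are \emph{not} semisimple. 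So the diagnosis of which spaces have $h$ invertible, and the strength of the conclusion available from part~3, both need to be corrected.
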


\begin{proof}
Note that for the varieties of the second column, we have $2 c_1(X) -1 = \dim X$ thus we must have $Y = X$. By Theorem \ref{thm-def-pos} and Proposition \ref{prop-def-pos}, the quadratic form $Q_Y$ is positive definite. By Theorem \ref{thm-alg-qh}, it is enough to prove that $\Ker E_h^Y = 0$ \emph{i.e.} that $E_h^Y$ is bijective. It is therefore enough to prove that $h$ is invertible \emph{i.e.} that $E_h^Y$ is surjective onto $A_0(Y)$. By Corollary \ref {coro-rest} it is enough to prove that $E_h^X$ is surjective onto $A_0(X)$. This is now an easy check using \cite{cmp1} and \cite{adjoint}.
\end{proof}

\begin{remark}
Note that this result together with the result of \cite{HMT} implies that the cohomology of the varieties $Y$ above is even and of pure type $(p,p)$.
\end{remark}

\subsection{Structure of the radical}

In this subsection, we describe the radical of $\QH(Y)$ for some Fano varieties $Y$.

\begin{prop}
\label{prop-loc-rad}
Let $Y$ be a general hyperplane section with $2 c_1(Y) > \dim Y$ of the following homogeneous space $X$:
$$\begin{array}{llll}
\Gr(2,2n) & & & \OG(2,2n) \\
\OG(6,12) & & & E_6/P_2 \\
E_6/P_6 & & & E_7/P_1\\
E_7/P_7 & & & E_8/P_8.\\
\end{array}$$
Then 
$$R(Y) = \Ker E_h^Y \cap \bigoplus_{2 c_1(Y) \not\ \!| \ k}\QH^{k}(Y).$$
\end{prop}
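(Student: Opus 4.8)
The plan is to combine the structural results already available: the positive-definiteness of $Q_Y$ (from Theorem \ref{thm-def-pos}), the abstract radical-localization statement (Theorem \ref{thm-alg-qh}), and the comparison of the multiplication operator $E_h^Y$ with $E_h^X$ via Corollary \ref{coro-rest}. Write $A = A(Y) = \QH(Y)_{q=1}$ with its $\Z/2c_1(Y)\Z$-grading. First I would check that all the varieties $X$ in the list satisfy $Q_X$ positive definite (they are cominuscule or adjoint ones covered by Proposition \ref{prop-geom-comin} and Proposition \ref{prop-def-pos}, or their linear sections via Theorem \ref{thm-def-pos}), so that $Q_Y$ is positive definite and Theorem \ref{thm-alg-qh} applies. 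To use part 3 of that theorem I need to establish the decomposition $A_0 = (A_0 \cap \Ker E_h) \oplus (A_0 \cap \im E_h)$; once that holds, Theorem \ref{thm-alg-qh}.3 gives $R(A) \subset \Ker E_h$ together with $A = \im E_h \oplus \Ker E_h$ and semisimplicity of the subalgebra generated by $h$.

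The key input is the behavior of $E_h^X$ (hence, by Corollary \ref{coro-rest}, of $E_h^Y$ in the relevant graded pieces). For these specific homogeneous spaces $X$ — where $2c_1(X) > \dim X$ fails to be as large as in the "semisimple" list of Theorem \ref{thm-semisimple-Y}, i.e. $h$ is \emph{not} invertible — I would examine the quantum Chevalley formula to see that multiplication by $h$ on $A(X)$ has a one-dimensional kernel (and cokernel) concentrated in a single degree, and that on the complementary degrees $E_h$ restricts to isomorphisms. This is the analogue of the "easy check using \cite{cmp1} and \cite{adjoint}" in the proof of Theorem \ref{thm-semisimple-Y}, but now with a controlled failure of surjectivity. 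Using the commutative squares of Corollary \ref{coro-rest} (the isomorphism squares in degrees $a \in [2c_1(Y), 2\dim Y - 2]$, and the $E_{h^{k+1}}^X$–$E_h^Y$ square landing in $A_0$), I transport this picture to $Y$: on every graded piece $A_k(Y)$ with $2c_1(Y) \nmid k$ the operator $E_h^Y$ (or a suitable power landing in the next piece) is an isomorphism onto its target except for the one degree where the kernel sits, and the kernel/image of $E_h^Y$ on $A_0(Y)$ itself is trivial, so $A_0(Y) \subset \im E_h^Y$ and $A_0(Y) \cap \Ker E_h^Y = 0$. That verifies the hypothesis of Theorem \ref{thm-alg-qh}.3.

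Granting that, Theorem \ref{thm-alg-qh}.3 yields $R(Y) \subset \Ker E_h^Y$ and $A(Y) = \im E_h^Y \oplus \Ker E_h^Y$, with the subalgebra $\R[h] \subset A(Y)$ semisimple. For the reverse inclusion and the precise description: since $\Ker E_h^Y$ is concentrated in the graded pieces $A_k(Y)$ with $2c_1(Y) \nmid k$ (as $E_h^Y$ is shown to be injective on $A_0(Y)$ and, by the transported diagrams, on every piece $A_k$ with $2c_1(Y) \mid k$), we get $\Ker E_h^Y \subset \bigoplus_{2c_1(Y)\nmid k}\QH^k(Y)$, so $R(Y) \subset \Ker E_h^Y \cap \bigoplus_{2c_1(Y)\nmid k}\QH^k(Y)$. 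Conversely, any element of $\Ker E_h^Y$ lying in a degree not divisible by $2c_1(Y)$ is a nilpotent element: in the decomposition $1 = \sum_\lambda 1_\lambda$ into idempotents for $E_h^Y$, a graded element in such a degree has zero component along $1_0$ (the only idempotent degree) and along each $1_\lambda$ with $\lambda\ne0$ (since $A_\lambda(E_h^Y)$ for $\lambda\ne 0$ is spanned by $\sum_{j}\lambda^{-j}E_{h^j}(v_0)$, a non-homogeneous vector whose degree-$k$ component, for $2c_1(Y)\nmid k$, cannot by itself be a $\lambda$-eigenvector), hence it is nilpotent and lies in $R(Y)$. This gives the equality.

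The main obstacle I expect is the degree-by-degree verification that $E_h^X$ has exactly a one-dimensional kernel in a single degree and is an isomorphism elsewhere, and correspondingly that $E_h^Y$ on $A_0(Y)$ is injective — i.e. extracting enough from the quantum Chevalley formula on each $X$ in the (eight-entry) list, together with the bookkeeping through Corollary \ref{coro-rest} to descend to $Y$. The rest is a formal consequence of the abstract machinery in Section \ref{section-rad}.
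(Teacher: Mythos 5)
Your plan follows the same skeleton as the paper (positive-definiteness of $Q_Y$, Theorem \ref{thm-alg-qh}.3, Corollary \ref{coro-rest}, and case-by-case inspection), but there is a genuine error in the central step: you assert that $A_0(Y) \cap \Ker E_h^Y = 0$. This is false for the varieties in this list, and in fact the whole point of the proposition hinges on the kernel being non-trivial in $A_0(Y)$. For $Y = \IG(2,2n)$ (hyperplane section of $\Gr(2,2n)$ from the list), the Lemma in Subsection \ref{sec-ig2n} explicitly produces an element $K_{4n-2}$ of $\Ker E_h^Y$ in degree $4n-2 = 2c_1(Y)$, which lies in $A_0(Y)$. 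If your claim held, the proposition would degenerate to $R(Y) = \Ker E_h^Y$, and one would not need to intersect with $\bigoplus_{2c_1(Y)\nmid k}\QH^k(Y)$. The reason for the intersection in the statement is precisely that $\Ker E_h^Y$ meets $A_0(Y)$ in a non-trivial, \emph{non-nilpotent} direction.

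The hypothesis of Theorem \ref{thm-alg-qh}.3 that needs to be checked is the decomposition $A_0 = (A_0 \cap \Ker E_h) \oplus (A_0 \cap \im E_h)$ \emph{with both summands non-zero}. The paper reduces this to the same statement in $A_0(X)$ via Corollary \ref{coro-rest} and checks it directly. Having that, the decisive observation that your proposal omits is: the unique (up to scalar) element $K \in \Ker E_h^Y \cap A_0(Y)$ has the form $K = \lambda \cdot 1 + v$ with $\lambda \neq 0$ and $v \in \im E_h^Y$. Then $K^n = \lambda^{n-1}K$ for all $n$, so $K$ is not nilpotent and hence not in $R(Y)$. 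This is what forces $R(Y)$ to be confined to the degrees not divisible by $2c_1(Y)$. Conversely, any $K \in \Ker E_h^Y$ supported in degrees $k$ with $2c_1(Y) \nmid k$ satisfies, for degree reasons, $K^n \in \im E_h^Y$ for some $n$; since $K^n \in \Ker E_h^Y$ and $\Ker E_h^Y \cap \im E_h^Y = 0$, one gets $K^n = 0$ and hence $K \in R(Y)$. Your eigenvector argument for the converse is conceptually in the right direction but is more fragile than this simple degree count, and in any case the proposal does not recover as a whole because of the incorrect vanishing claim on $A_0(Y) \cap \Ker E_h^Y$.
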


\begin{proof}
Note that for the varieties of the second column, we have $2 c_1(X) -1 = \dim X$ thus we must have $Y = X$. 

We first prove $R(Y) \subset \Ker E_h^Y$. By Theorem \ref{thm-def-pos} and Proposition \ref{prop-def-pos}, the quadratic form $Q_Y$ is positive definite. By Theorem \ref{thm-alg-qh}, it is enough to prove that $A_0(Y) = \Ker E_h^Y \cap A_0(Y) \oplus \im E_h^Y \cap A_0(Y)$. By Corollary \ref{coro-rest} this is equivalent to the same statement in $A_0(X)$. This is now an easy check using \cite{cmp1} and \cite{adjoint}. 

Note that in $\QH_0(Y)$, there is a unique vector $K \in \Ker E_h^Y$
and this vector is of the form $K = \lambda + v$ with $\lambda \in
\R\setminus \{0\}$ and $v \ni \im E_h^Y$. We thus have $K^n =
\lambda^{n-1} K$ and $K$ is not nilpotent. We now prove that any
element $K \in \Ker E_h^Y \cap \bigoplus_{2 c_1(Y) \not\ \!|
  \ k}\QH^{k}(Y)$ is nilpotent. Remark that for degree reasons there
will be a power $K^n$ of $K$ in $\im E_h^Y$ (by Corollary
\ref{coro-rest} and case by case inspection on $X$. We refer to
Subsection \ref{sec-ig2n} for a detailled proof of this in the case $X
= \Gr(2,2n)$). But $K^n$ is in $\Ker E_h^Y$ thus $K^n = 0$.  
\end{proof}

\begin{example}
Let $X = \Gr(2,6)$ and $Y$ be a general hyperplane section. Then $Y$ is
isomorphic to an isotropic Gra{\ss}mann variety $\IG(2,6)$. We have
$\dim Y = 7$ and $c_1(Y) = 5$. The dimensions of the graded parts of
$A(Y)$ are 
$$\begin{array}{l|ccccc}
k & 0 & 2 & 4 & 6 & 8 \\
\hline
A_k(Y) & 3 & 2 & 3 & 2 & 2 \\
\end{array}$$
One easily checks that $\Ker E_h^Y$ has dimension 2. There is an
element $K_0$ of degree $0$ in $\Ker E_h^Y$ and an element $K_4$ of
degree $4$ in $\Ker E_h^Y$. The image $\im E_h^Y$ is a complement of
$\Ker E_h^Y$. As in the above proposition $K_0$ is not nilpotent and
$K_4^2$ has degree $8$ this is in $\im E_h^Y$ so $K_4^2 = 0$ and we
have $R(Y) = \scal{K_4}$. We recover the example in \cite[Section
  7]{semisimple}.
\end{example}

\section{Big quantum cohomology}

In this section we consider the case of two Fano varieties $Y$ obtained as
hyperplane section of a cominuscule homogeneous space $X$ whose small
quantum cohomology $\QH(Y)$ is not semisimple. We however prove that
the big quantum cohomology $\BQH(Y)$ is semisimple. These are the
first examples of semisimplicity of the big quantum cohomology in the
presence of non semisimple small quantum cohomology. 

The varieties we consider are $Y = \IG(2,2n)$ obtained as hyperplane section of $X = \Gr(2,2n)$ and $Y = F_4/P_4$ obtained as hyperplane section of $X = E_6/P_6$. Note that both varieties are homogeneous and actually \emph{coadjoint varieties} in the sense of \cite{adjoint}. Their small quantum cohomology is not semisimple but well understood. We refer to \cite{adjoint} for more details. In the next subsection, we recall few fact on $\QH(Y)$ for $Y$ coadjoint.

\subsection{Quantum cohomology of coadjoint varieties}

Let $Y$ be one of the following two varieties $Y = \IG(2,2n)$ or $Y = F_4/P_4$ which are homogeneous under the action of a reductive group $G$ of type $C_n$ of $F_4$ respectively.

\subsubsection{Cohomology and short roots}

The cohomology of a coadjoint variety $Y$ homogeneous under the action of a reductive group $G$ is easily described using Schubert classes. There are several indexing sets for Schubert varieties. We will choose the indexing set described in \cite{adjoint}. Let $R_s$ be the set of short roots in $R$ the root system of $G$. For each root $\a \in R_s$, there is a cohomology class $\s_\a$ and the family $(\s_\a)_{\a \in R_s}$ form a basis of $\HH(X)$. 

Let $n$ be the rank of the group (and of the roots system $R$). We choose $(\a_1 , \cdots , \a_n)$ a Basis of the root system with notation as in \cite{bou}. For a root $\a$ we have an expression 
$$\a = \sum_{i = 1}^n a_i \a_i$$ 
with $a_i \in \Z$ for all $i \in [1,n]$. We define the height $\htt(\a)$ of $\a \in R$ by 
$$\htt(\a) = \sum_{i = 1}^n a_i.$$
Let $\theta$ be the highest short root of $R$. The above indexing satisfies many nice properties. We have (see \cite[Proposition 2.9]{adjoint})
$$\deg(\s_\a) = \left\{ \begin{array}{ll}
 2(\htt(\theta) - \htt(\a)) & \textrm{ for $\a$ positive,} \\
 2(\htt(\theta) - \htt(\a) -1) & \textrm{ for $\a$ negative.} \\ 
 \end{array}\right.$$
We will write $1$ for the class $\s_{\theta}$ and $h$ for the hyperplane class in the Pl\"ucker embbeding of $X$. The Poincar\'e duality has a very simple form on roots: the Poincar\'e dual $\s_\a^\vee$ of $\s_\a$ is simply $\s_\a^\vee = \s_{-\a}$ (see \cite[Proposition 2.9]{adjoint}).

\subsubsection{Small quantum cohomology and affine short roots}

The above parametrisation of Schubert classes by short roots can be
extended to quantum monomials in $\QHl = \QH(X)[q,q^{-1}]$. A quantum monomial is a element $q^d \s_\a$ where $d \in \Z$ and $\a \in R_s$. We write $\QM(X)$ for the set of all quantum monomials and we write $\QHlh{k}$ for the degree $k$ part of $\QHl$.

Let $\Rh$ be the extended affine root system of $R$ and let $\delta$
be the minimal positive imaginary root. The extended root system has basis
$(\a_0, \cdots , \a_n)$ and in this basis we have $\delta = \Theta +
\a_0$ where $\Theta$ is the highest root of $R$. A short root of $\Rh$ is a root
of the form $\a + d \delta$ for $\a \in R_s$ and $d \in \Z$. We write
$\Rh_s$ for the set of short roots in $\Rh$. There is a bijection 
$$\eta : \Rh_s \to \QM(X)$$
defined by $\eta(\a - d \delta) = q^d \s_\a$. Note that we can extend the height function on $\Rh$ and that we have $\deg(q) = 2(\htt(\delta) - 1)$. 

\subsubsection{Multiplication with the hyperplane}
We have the following very simple description of the small quantum product $\star_0$ with $h$ (see \cite[Theorem 3]{adjoint}):
$$h \star_0 \s_\a = \sum_{i \in [0,n],\ \scal{\a_i^\vee,\a} > 0}
\scal{\a_i^\vee,\a} \eta(s_{\a_i}(\a)).$$

\subsection{Gra{\ss}mann variety of lines}
\label{sec-ig2n}

Let $X = \Gr(2,2n)$ and let $Y$ be a linear section of $X$ of
codimension $1$. 
Note that $Y = \IG(2,2n)$ is an isotropic Gra{\ss}mann
variety. The small quantum cohomology of this variety is described in \cite{adjoint}. We first prove the following result which improves Proposition \ref{prop-loc-rad}. 

\begin{lemma}
Let $Y$ be a linear section of $X = \Gr(2,2n)$ of codimension $1$.

1. There exists a unique element $K_{4n+2}$ modulo scalar in $\QH^{4n+2}(Y) \cap
\Ker E_h^Y$.

2. The element $K_{4n+2}^k$ is divible by $q^k$. 

3. We have $R(Y) = \scal{K_{4n+2},\cdots,q^{-(n-2)}K_{4n+2}^{n-2}}$ and $K_{4n+2}^{n-1} = 0$. 
\end{lemma}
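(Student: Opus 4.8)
The plan is to exploit the explicit combinatorial description of $E_h^Y$ inherited from $X = \Gr(2,2n)$ via Corollary \ref{coro-rest}, together with the root-theoretic bookkeeping of Subsection on coadjoint varieties applied to the coadjoint variety $Y = \IG(2,2n)$ of type $C_n$. First I would settle part 1: by Proposition \ref{prop-loc-rad}, the radical $R(Y)$ lives inside $\Ker E_h^Y$ restricted to the degrees not divisible by $2c_1(Y)$, and by Corollary \ref{coro-rest} the operator $E_h^Y$ on the graded pieces $A_a(Y)$ is conjugate (via the isomorphism $J$) to $E_h^X$ (or $E_{h^{k+1}}^X$ in the wrap-around degree). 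So I would compute $\Ker E_h^X$ on $\QH(X)$ for $X=\Gr(2,2n)$ degree by degree, read off that in the single degree $4n+2$ (i.e. $\deg\sigma = 2c_1(Y)-2$ in the $(q=1)$-grading, where $2c_1(Y)-1 = \dim Y$ since $\IG(2,2n)$ is coadjoint) there is exactly one kernel vector up to scalar, and transport it back to $\QH(Y)$ via $J^{-1}$. This is the "easy check using \cite{cmp1} and \cite{adjoint}" already invoked in Proposition \ref{prop-loc-rad}, now made quantitative; concretely one uses the formula $h \star_0 \sigma_\a = \sum_{i,\ \scal{\a_i^\vee,\a}>0} \scal{\a_i^\vee,\a}\,\eta(s_{\a_i}(\a))$ and checks that the kernel of this operator on each graded piece of $\QHl$ is one-dimensional precisely in the relevant degree.

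For part 2, the point is that $K_{4n+2}$ is a combination of quantum monomials $q^d \sigma_\a$ with $\deg(q^d\sigma_\a) = 4n+2$; I would show that the minimal $q$-power appearing is $q^1$, i.e. $K_{4n+2} = q\,\kappa$ for some $\kappa \in \HH(Y)$ with no purely classical ($q^0$) term, which follows from the fact that $4n+2 > 2\dim Y = 4n-2$ forces every monomial in $K_{4n+2}$ to carry a positive $q$-power (classical cohomology vanishes above degree $2\dim Y$). Then raising to the $k$-th power in $\QH(Y)$ multiplies the $q$-content, and an induction using that the product $K_{4n+2}^k$ again lies in a degree too high to support anything but $q^{\geq k}$ terms gives divisibility of $K_{4n+2}^k$ by $q^k$; here I would use $\deg(q)=2(\htt(\delta)-1)$ to control degrees and the associativity of $\star_0$ to keep track of $q$-orders.

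For part 3, I would first observe that $\bigoplus_{2c_1(Y)\nmid k}\QH^k(Y) \cap \Ker E_h^Y$ is spanned exactly by $K_{4n+2}$ and its powers $q^{-j}K_{4n+2}^{j+1}$ up to the point where the degree wraps past a multiple of $2c_1(Y)$: since $\deg K_{4n+2} = 4n+2$ and $2c_1(Y) = 2(2n-1) = 4n-2$, the powers $q^{-j}K_{4n+2}^{j+1}$ stay in degrees $\equiv 4 \pmod{4n-2}$ (after the $q$-normalization) for $j = 0,\dots,n-2$, and $q^{-(n-1)}K_{4n+2}^{n}$ would land in a degree that is a multiple of $2c_1(Y)$ — forcing it into $\Ker E_h^Y \cap A_0(Y)$, which by the decomposition $A_0(Y) = (\Ker E_h^Y\cap A_0(Y))\oplus(\im E_h^Y\cap A_0(Y))$ and the argument of Proposition \ref{prop-loc-rad} (the unique degree-$0$ kernel vector is not nilpotent, being of the form $\lambda + v$ with $\lambda\neq 0$) is incompatible with nilpotence unless the element is zero. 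Combined with part 2 ($K_{4n+2}^{n-1}$ is divisible by $q^{n-1}$ and lies in the degree-$0$-type kernel, hence vanishes) this yields $K_{4n+2}^{n-1}=0$ and identifies $R(Y)$ with the span of $K_{4n+2},\dots,q^{-(n-2)}K_{4n+2}^{n-2}$ — each of which is nilpotent and lies in $\Ker E_h^Y$ — matching Proposition \ref{prop-loc-rad}.

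The main obstacle I expect is part 2 and the bookkeeping in part 3: one must verify cleanly that the various powers $K_{4n+2}^j$ carry exactly the predicted $q$-order (not merely at least it) so that $q^{-j}K_{4n+2}^{j}$ is a genuine, nonzero cohomology class and that it is the \emph{unique} kernel vector in its degree — this requires either an explicit computation of $K_{4n+2}$ as a short-root combination $\sum c_\a \eta(\a + d_\a\delta)$ and tracking $s_{\a_i}$-orbits under $h\star_0(-)$, or a dimension count on each graded piece of $\QH(Y)$ showing $\dim(\Ker E_h^Y \cap \QH^k(Y)) \leq 1$ for the relevant $k$. The detailed case analysis for $X=\Gr(2,2n)$ is the technical heart; everything else is formal manipulation of the grading and of $\star_0$.
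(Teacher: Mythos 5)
The overall plan is in the spirit of the paper's argument (reduce to $E_h^X$ via Corollary \ref{coro-rest}, count dimensions, distinguish the non-nilpotent kernel vector in $A_0$, kill nilpotents via $\im E_h^Y\cap\Ker E_h^Y=0$), but the numerical bookkeeping is wrong at several load-bearing points and parts 2 and 3 do not go through as you have written them.

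Your argument for part 2 is the critical failure. Here $\dim Y=\dim\IG(2,2n)=4n-5$ and $c_1(Y)=2n-1$, so $2\dim Y=8n-10$, not $4n-2$ (the latter is $2c_1(Y)$). For $n\ge3$ one has $4n+2\le 8n-10$, hence $H^{4n+2}(Y,\R)\neq 0$ and $\QH^{4n+2}(Y)=H^{4n+2}(Y,\R)\oplus q\,H^{4}(Y,\R)$. So the statement ``classical cohomology vanishes above $2\dim Y$'' is true but irrelevant: nothing prevents the kernel element from having a nonzero $q^0$-component, and your degree argument does not establish divisibility of $K_{4n+2}$ by $q$, let alone the higher divisibilities. (Relatedly, the claim $2c_1(Y)-1=\dim Y$ invoked in part 1 is also false; one has $\dim Y=2c_1(Y)-3$, and $4n+2=2c_1(Y)+4$, not $2c_1(Y)-2$.)

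In part 3 the degree tracking is also off. One has $\deg\bigl(q^{-j}K_{4n+2}^{j+1}\bigr)=(j+1)(4n+2)-j(4n-2)=4(j+1)+2c_1(Y)$, so these classes live in degrees $\equiv 4(j+1)\pmod{2c_1(Y)}$, not all $\equiv4$. And $K_{4n+2}^{\,n-1}$ lies in $A_{4n-4}(Y)$, not in $A_0(Y)$ (indeed $(n-1)(4n+2)\equiv 4n-4\pmod{4n-2}$), so the ``$\lambda+v$ hence not nilpotent'' argument for the $A_0$-kernel vector is not what forces $K_{4n+2}^{\,n-1}=0$. The correct mechanism, and the one the paper uses, is that $4n-4=4(n-1)$ is not among the degrees $0,4,\dots,4n-8$ on which $E_h^Y$ has a kernel, so $A_{4n-4}(Y)\subset\im E_h^Y$; since $K_{4n+2}^{\,n-1}\in\Ker E_h^Y$ as well, it must vanish.

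Finally, you do not address the non-vanishing of $K_{4n+2}^i$ for $i\le n-2$, which is essential to identify the span in part 3. The paper handles this by a separate device: it checks $j^*\s_{(1,1)}^{n-2}=j^*\s_{(n-2,n-2)}$ is not in $\Ker E_h^Y$ (via $h\cup\s_{(n-2,n-2)}\neq 0$), writes $j^*\s_{(1,1)}=\lambda K_{4n+2}+v$ and $j^*\s_{(1,1)}^{n-2}=\mu K_{8n-10}+w$ with $\lambda,\mu\neq0$ and $v,w\in\im E_h^Y$, and uses the vanishing of all mixed terms ($\Ker\cdot\im=0$) to get $\mu K_{8n-10}=\lambda^{n-2}K_{4n+2}^{n-2}\neq 0$. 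A dimension count that $\Ker E_h^Y$ is one-dimensional in each relevant degree gives uniqueness but not non-vanishing of the powers, so this step cannot be omitted.
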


\begin{proof}
1. Note that all odd dimension cohomolgy groups of $X$ and $Y$
vanish. Recall also that we have the inclusion $R(Y) \subset \Ker
E_h^Y$. Using the description of Schubert classes with partitions
having 2 parts, we
have
$$\dim \QH^{2a}(X) = \left\{ \begin{array}{ll}
n & \textrm{for $a$ even} \\
n - 1 & \textrm{for $a$ odd.} \\
\end{array}
\right.$$
We deduce results on the dimension of the graded
parts in $\QH(Y)$. Recall that multiplication with $q$ induces an
isomorphism $\QH^{2a}(Y) \to \QH^{2a + 2 c_1(Y)}(Y)$ so we only need
to describe these dimensions for $0 \leq a < c_1(Y) = 2n - 1$.
$$\dim \QH^{2a}(Y) = \left\{ \begin{array}{ll}
n & \textrm{for $a \leq 2n - 4$ even} \\
n - 1 & \textrm{for $a = 2n - 2$} \\
n - 1 & \textrm{for $a \leq 2n -3$ odd.} \\
\end{array}
\right.$$
We work modulo the ideal $(q-1)$. This is enough since we can recover the powers of $q$ by considering the degrees. An easy check gives that $E_h^X : A_{2a}(X)
\to A_{2a + 2}(X)$ is of (maximal) rank $n-1$. Corollary
\ref{coro-rest} implies that the same holds for $E_h^Y$. In particular
$\Ker E_h^Y = \scal{K_{4n - 2} , K_{4n-2},\cdots,K_{8n - 10}}$ for some $K_{a} \in
A_{a}(Y) \cap \Ker E_h^Y$. Furthermore $\im E_h^Y$ is a complement of this space. This implies that $K_{4n-2} = \lambda + v$ with $v \in \im
E_h^Y$ and since $\Ker E_h^Y \cap \im E_h^Y = 0$, we have $K_{4n-2}^N =
\lambda^{N-1}K_{4n-2} \neq 0$ and $K_{4n-2}$ is not nilpotent. We claim that
modulo scalar we have $K_{4n+2}^i = K_{4i}$ for all $i \in [1,n-2]$. 
Let $\s_{(1,1)} \in H^{4}(X,\Z)$ be the Schubert class defined by the
partition $(1,1)$ (this Schubert class is also the top Chern class of the
tautological subbundle of $X$). It is easy to check that
$\s_{(1,1)}^{n-2} = \s_{(n-2,n-2)}$ this last class is the Schubert class associated to
the partition $(n-2,n-2)$. Indeed for degree reasons, this product is
a classical cohomological product and the result follows from
Littelwood-Richardson's rule. We get $h \cup \s_{(n-2,n-2)} \neq
0$. This implies that for $j:Y \to X$ the inclusion, we have
$j^*\s_{(1,1)}^{n-2} = j^*\s_{(n-2,n-2)}$ and that $j^*\s_{(1,1)}^{n-2}
\cup h \neq 0$. In particular $j^*\s_{(1,1)}^{n-2} \not \in \Ker
E_h^Y$ thus $j^*\s_{(1,1)} \not \in \Ker E_h^Y$ and we may write
$j^*\s_{(1,1)}^{n-2} = \mu K_{8n - 10} + w$ and $j^*\s_{(1,1)} = \lambda K_{4n+2} + v$ for some $\lambda,\mu \in \R \setminus
\{0 \}$ and $v,w \in \im E_h^Y$. This implies $\mu K_{8n - 10} + w =
j^*\s_{(1,1)}^{n-2} = \lambda^{n-2} K_{4n+2} + v^{n-2}$ and thus $\mu
K_{8n-10} = \lambda^{n-2} K_{4n+2}^{n-2}$ proving the claim. Now $K_{4n+2}^{n-1}
\in A_{4n-4}(Y) \subset \im E_h^Y$ (recall that we have a $\Z/2c_1(Y)\Z = \Z/(4n-2)\Z$-grading) thus $K_{4n+2}^{n-1} = 0$ proving the
result.
\end{proof}

For technical reasons we have to distinguish the cases $n = 3$ and $n \geq 4$ in the proof of the next result.

\begin{thm}
\label{thm-bqh1}
Let $Y = \IG(2,2n)$. Then $\BQH(Y)$ is semisimple.
\end{thm}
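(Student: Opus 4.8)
The strategy is to deform the small quantum product of $Y$ along a single well-chosen direction $\tau$, to observe that positive definiteness of the relevant quadratic form persists, and then to reduce the semisimplicity of $\BQH(Y)$ to an invertibility statement for the hyperplane class $h$ in the deformed algebra via Proposition \ref{main-prop}. Concretely, I would work with $A_\tau:=\BQH_\tau(Y)_{q=1}$ of Subsection \ref{sub-def}: once $t=x_{r+1}$ is given the degree that makes $\star_\tau$ homogeneous, this is again a graded finite-dimensional algebra, now over $\R[[t]]$, to which Proposition \ref{main-prop} still applies. Since semisimplicity of $\BQH(Y)$ is an open condition on its parameter, it suffices to prove that $A_\tau$ is semisimple at a generic value of $t$. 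The degree-$0$ piece carries the quadratic form $Q_{Y,\tau}(a,b)=\varphi_0(a\star_\tau b)$, which specializes at $t=0$ to $Q_Y$; since $Y=\IG(2,2n)$ is a hyperplane section of $\Gr(2,2n)$ with $2c_1(Y)=4n-2>4n-5=\dim Y$, Theorem \ref{thm-def-pos} shows that $Q_Y$ is positive definite, hence so is $Q_{Y,\tau}$ for $t$ near $0$. Proposition \ref{main-prop} then yields $R(A_\tau)\subset A_0(E_h^\tau)$, so it is enough to arrange $\Ker E_h^\tau=0$ for generic $t$.

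Everything thus reduces to choosing $\tau$ so that $\det E_h^\tau\not\equiv 0$ as a power series in $t$. At $t=0$ this determinant vanishes: by the Lemma above, $\Ker E_h^Y$ is spanned by a non-nilpotent class $K_0$ in the degree-$0$ part $A_0(Y)$ together with $R(Y)=\langle K_{4n+2},\dots,q^{-(n-2)}K_{4n+2}^{n-2}\rangle$, where $K_{4n+2}^{n-1}=0$, and its $n-1$ generators lie in pairwise distinct degrees. Using the divisor-deformed formula $h\star_\tau v=h\star_0 v+t\,\Psi_1(\tau\star_0 v)+O(t^2)$ of Subsection \ref{sub-def} (with $\Psi_1$ the operator multiplying the $q^\beta$-coefficient by $\int_\beta h$) and the vanishing $h\star_0 v=0$, each basis vector $v$ of $\Ker E_h^Y$ leaves $\Ker E_h^\tau$ already to first order provided $h\star_\tau v\notin\im E_h^Y$ at order $t$, i.e.\ provided the small quantum product $\tau\star_0 v$ carries a suitable non-zero power of $q$. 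Since these basis vectors lie in distinct degrees, the coefficient of the lowest power of $t$ in $\det E_h^\tau$ is, up to sign, the product of these $n-1$ first-order contributions; so $\det E_h^\tau\not\equiv 0$ — and hence, by Proposition \ref{main-prop}(2), $A_\tau$ is semisimple for generic $t$ — as soon as we exhibit one class $\tau$ making all of them non-zero.

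The main obstacle is precisely this last step. It requires first identifying $K_0$ and the powers $K_{4n+2}^j$ as explicit (combinations of) Schubert classes on the coadjoint variety $Y$, and then producing a suitable $\tau$ — I would try a Schubert class $\s_\alpha$ indexed, in the notation of Subsection \ref{sec-ig2n}, by a short root $\alpha$ lying near the root carrying $K_{4n+2}$ — for which the quantum corrections in $\tau\star_0 K_0$ and in each $\tau\star_0 q^{-j}K_{4n+2}^j$ ($1\le j\le n-2$) are non-zero. These degree-one, three-point invariants can be computed either by a quantum-to-classical comparison, reducing them (together with the degree-$2$ invariants, as in Remark \ref{rem-thm-def-pos}) to Schubert calculus on $\Gr(2,2n)$, or directly from the combinatorial rule for $h\star_0 -$ recalled in Subsection \ref{sec-ig2n}. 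Since the number of conditions grows with $n$, the verification separates naturally into the case $n=3$, where $\Ker E_h^Y$ is only two-dimensional, spanned by $K_0$ and $K_{4n+2}$, and the two corrections are checked directly, and the case $n\ge 4$, where a uniform argument is needed — presumably an induction on $n$ built on the identity $\s_{(1,1)}^{n-2}=\s_{(n-2,n-2)}$ that already underlies the structure of the powers of $K_{4n+2}$. Once $E_h^\tau$ is invertible for generic $t$, Proposition \ref{main-prop}(2) gives semisimplicity of $A_\tau$, and therefore of $\BQH(Y)$.
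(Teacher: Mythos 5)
Your strategy is genuinely different from the paper's. The paper proves Theorem~\ref{thm-bqh1} by contradiction: it supposes a non-zero nilpotent $C=C_0+tC_1+O(t^2)$ of order $2$ exists in $\BQH_\tau(Y)$, pins down $C_0$ in $R(Y)$, extracts a chain of equations from the nilpotency of $h\star_\tau C$ order by order in $t$, rewrites $\pt$ (up to $O(t^2)$) in terms of $C$, $h\star_\tau D$ and $C_1$, and finally computes $\pt\star_\tau\pt$ two ways to land on $9\,I_4(\pt,\pt,\pt,\pt)=6$, which contradicts integrality. (It also uses $\tau=\pt$ only for $n=3$; for $n\geq4$ it deforms along $\tau=\s_{\theta-\a_1-\a_2}\in\QH^4(Y)$.) Your proposal instead wants to apply Theorem~\ref{thm-alg-qh}/Proposition~\ref{main-prop} directly to the deformed algebra $A_\tau$ and reduce everything to $\det E_h^\tau\not\equiv 0$.

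There is a genuine gap at exactly this point. Proposition~\ref{main-prop} is proved for a finite-dimensional $\Z/c_1\Z$-graded $\R$-algebra, and the grading is essential to its proof: it is what allows $E_{a^{c_1}}\vert_{A_0}$ to be self-adjoint for $Q_0$ and what allows the simple spectrum on $A_0$ to be lifted to all of $A$. For $A_\tau$ this grading disappears as soon as $t\neq 0$. To make $\star_\tau$ homogeneous one must set $\deg t=2-\deg\tau$, which for $\tau=\pt$ is $2-2\dim Y<0$; after setting $q=1$ and specializing $t$ to a nonzero real number, the residual grading is only modulo $\gcd\bigl(2c_1(Y),\deg\tau-2\bigr)$. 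For $Y=\IG(2,2n)$ and $\tau=\pt$ this gcd is $2$, so (as all cohomology is even) the grading is effectively trivial and the ``degree-$0$ piece'' becomes the entire algebra. On that enlarged $A_0$ the form $a\mapsto\varphi_0(a\starz a)$ is already degenerate at $t=0$ — it vanishes whenever $\deg a$ is not a multiple of $c_1(Y)$ — so continuity from $Q_Y$ cannot give positive definiteness there, and nothing forces $R(A_\tau)\subset A_0(E_h^\tau)$. In particular $\det E_h^\tau\neq0$ alone does not imply semisimplicity: invertibility of $h$ is not sufficient in a non-graded commutative algebra (e.g.\ $\C[x]/(x^2-1)^2$ has $x$ invertible but is not reduced). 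Your determinant/perturbation idea is therefore aimed at the wrong target; Theorem~\ref{thm-alg-qh} simply does not transport to $\star_\tau$ for $\tau$ outside $H^2$, and the paper's contradiction argument via an integral Gromov--Witten invariant is precisely how it sidesteps the absence of a usable grading.
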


\begin{proof}
We will use the notation of \cite{bou} to index simple roots of the root system of type $\textrm{C}_n$. For classes $\s,\s' \in \HH(Y)$ we will write
$$\s \star_\tau \s' = \s \starz \s' + t (\s \star_1 \s') + O(t^2).$$

\textbf{We start with the case $n = 3$}. Let $\tau = \pt = \s_{-\theta}$ where $\theta$ is the highest short root. We prove that $\BQH_\tau(Y)$ (with notation as in Subsection \ref{sub-def}) is semisimple. 

Let $C \in \BQH_\tau(Y)$ be a nilpotent element of order $2$. If $C \neq 0$, up to dividing with the parameter $t$ (see Subsection \ref{sub-def}), we may assume that $C$ is of the form 
$$C = C_0 + t C_1 + O(t^2) \textrm{ with $C_0 \neq 0$}.$$
We have $0 = C \star_\tau C = C_0 \starz C_0 + O(t)$ thus $C_0 \in R(Y)$. Up to rescaling, we may assume that $C_0 = q \s_{\a_2 + \a_3} - q \s_{\a_2 + \a_3} + \s_{-\theta} \in \QH^{14}(Y)$. On the other hand, the product $h \star_\tau C$ has to be nilpotent. But we have 
$$h \star_\tau C = h \starz C_0 + t ( h \starz C_1 + h \star_1 C_0) + O(t^2) = t ( h \starz C_1 + h \star_1 C_0) + O(t^2).$$ 
In particular $h \starz C_1 + h \star_1 C_0$ is nilpotent. Since $h \star_1 C_0$ is of degree 28 and since there is no nilpotent element of degree 28 in $\QH(Y)$, we must have  $h \starz C_1 + h \star_1 C_0 = 0$. This is possible since $E_h^Y$ is surjective on degree 28. The element $C_1$ is thus uniquely determined by $h \starz C_1 = - h \star_1 C_0$. 
Note that $C_1 \in \QH^{26}(Y) \subset \im E_h^Y$ and $C_0 \in \Ker E_h^Y$ thus $C_0 \starz C_1 = 0$.

We now remark the following equality $C_0 = 3 \s_{-\theta} + (q \s_{\a_2 + \a_3} - q \s_{\a_2 + \a_3} -2 \s_{-\theta})$ where the second term is in the image of $E_h^Y$. Thus there exists $D_0 \in \QH^{12}(Y)$ with $h \starz D_0 = q \s_{\a_2 + \a_3} - q \s_{\a_2 + \a_3} -2 \s_{-\theta}$. An easy computation gives 
$$D_0 = q \s_{\a_1 + \a_2 + \a_3} - 2 \s_{-\a_1-\a_2-\a_3}.$$ 
We have $h \star_\tau D_0 = (q \s_{\a_2 + \a_3} - q \s_{\a_2 + \a_3} -2 \s_{-\theta}) + t (h \star_1 D_0) + O(t^2)$ with $h \star_1 D_0 \in \QH^{26}(Y)$. Since $E_h^Y$ is surjective on degree 26, there exists $D_1 \in \QH^{24}(Y)$ with $h \starz D_1 = - h \star_1 D_0$. Setting $D = D_0 + t D_1$ we get $h \star_\tau D = q \s_{\a_2 + \a_3} - q \s_{\a_2 + \a_3} -2 \s_{-\theta} + O(t^2)$. This altogether gives $C_0 = 3 \pt + h \star_\tau D + O(t^2)$ and 
$$3 \pt = C - h \star_\tau D - t C_1 + O(t^2).$$ 
Computing the square gives (recall that $C \star_\tau C = 0$, $h \star_\tau C = O(t^2)$ and $C_0 \starz C_1 = 0$)
$$\begin{array}{ll}
9 \pt \star_\tau \pt 
& = h \star_\tau D \star_\tau h \star_\tau D - 2 t C \star_\tau C_1 + 2 t h \star_\tau D \star_\tau C_1 + O(t^2) \\
& = h \star_\tau D \star_\tau h \star_\tau D - 2 t C_0 \starz C_1 + 2 t h \starz D \starz C_1 + O(t^2) \\
& = h \star_\tau D \star_\tau h \star_\tau D + 2 h \starz D \starz C_1 + O(t^2) \\
\end{array}$$
Note that we have
$$\begin{array}{rl}
h \star_\tau D \star_\tau h \star_\tau D = & h \starz h \starz D_0 \starz D_0 \\
 & + 2 t h \starz h \starz D_0 \starz D_1 + t h \starz h \starz (D_0 \star_1 D_0) \\
 & + t h \starz ( h \star_1 (D_0 \starz D_0)) + t h \star_1 (h \starz D_0 \starz D_0) + O(t^2)\\
 = & t h \star_1 (h \starz D_0 \starz D_0) + t \cdot \im E_h^Y + O(t^2).\\
\end{array}$$  
Finally we obtain $9 \pt \star_\tau \pt = 9 \pt \starz \pt + t h \star_1 (h \starz D_0 \starz D_0) + t \cdot \im E_h^Y + O(t^2)$. Since $D_0$ is explicitely given and since the endomorphism $h \star_1 -$ is understood using $\pt \starz -$ (see Subsection \ref{sub-def}) we get
$$9 \pt \star_1 \pt = 12 q^4 + 3 q^3 \s_{-\a_1-\a_2} - 3 q^3 \s_{-\a_2-\a_3} + \im E_h^Y = 6 q^4 + \im E_h^Y.$$

On the other hand we compute directly the product $\pt \star_\tau \pt$. We have
$$\pt \star_1 \pt = \sum_{d \geq 0} \sum_{\a \in R_s} q^d I_d(\pt,\pt,\pt,\s_{-\a}) \s_{\a}.$$ 
The invariant $I_d(\pt,\pt,\pt,\s_{-\a})$ vanishes unless $3 \deg \pt + \deg \s_{-\a} = 2 \dim Y + 2d c_1(Y) + 2$ thus $\deg \s_{-\a} = 10 d - 26$. Since $\deg \s_{-\a}$ is even and contained in $[0,14]$, the invariant $I_d(\pt,\pt,\pt,\s_{-\a})$ vanishes unless $d = 3$ and $\deg \s_{-\a} = 4$ or $d = 4$ and $\deg \s_{-\a} = 14$. But one easily check that there is no degree 3 curve in $Y = \IG(2,6)$ passing through 3 points in general position. Thus $I_3(\pt,\pt,\pt,-) = 0$. The only non vanishing invariant is thus $I_4(\pt,\pt,\pt,\pt)$ and we have 
$$9 \pt \star_1 \pt = 9 I_4(\pt,\pt,\pt,\pt) q^4 = 6 q^4 + \im E_h^Y.$$ 
Since $q^4 \not \in \im E_h^Y$ this implies $9 I_4(\pt,\pt,\pt,\pt) = 6$ which is impossible since $I_4(\pt,\pt,\pt,\pt) \in \Z_{\geq0}$.

\medskip

\textbf{Assume $n \geq 4$.}
Let $\tau = \s_{\theta - \a_1 - \a_2} \in \QH^{4}(Y)$ where $\theta$ is the highest short root. We prove that $\BQH_\tau(Y)$ (with notation as in Subsection \ref{sub-def}) is semisimple. 

Let $C \in \BQH_\tau(Y)$ be a nilpotent element of order $2$. If $C \neq 0$, up to dividing with the parameter $t$ (see Subsection \ref{sub-def}), we may assume that $C$ is of the form 
$$C = C_0 + t C_1 + O(t^2) \textrm{ with $C_0 \neq 0$}.$$
We have $0 = C \star_\tau C = C_0 \starz C_0 + O(t)$ thus $C_0 \in R(Y)$. Up to rescaling and multiplying with the generator of $R(Y)$, we may assume that $C_0$ is the unique element in $R(Y)$ of degree $8n-10$ such that the coefficient of $\s_{-\theta}$ in $C_0$ is 1. The product $h \star_\tau C$ has to be nilpotent. But we have 
$$h \star_\tau C = h \starz C_0 + t ( h \starz C_1 + h \star_1 C_0) + O(t^2) = t ( h \starz C_1 + h \star_1 C_0) + O(t^2).$$ 
In particular $h \starz C_1 + h \star_1 C_0$ is nilpotent. Since $h \star_1 C_0$ is of degree $8n - 6$ and since there is no nilpotent element of degree $8n - 6$ in $\QH(Y)$, we must have  $h \starz C_1 + h \star_1 C_0 = 0$. This is possible since $E_h^Y$ is surjective on degree $8n - 6$. The element $C_1$ is thus uniquely determined by $h \starz C_1 = - h \star_1 C_0$. 
Note that $C_1 \in \QH^{8n - 4}(Y) \subset \im E_h^Y$ and $C_0 \in \Ker E_h^Y$ thus $C_0 \starz C_1 = 0$ (one can check that $h \star_1 C_0 \neq 0$ but we do not need this since if $h \star_1 C_0 = 0$, then $h \starz C_1$ has to be nilpotent which implies $h \starz C_1 = 0$ thus $C_1 \in R(X)$ and $C_0 \starz C_1 = 0$).

We now remark the following equality $C_0 = n \s_{-\theta} + v$ with $v \in \im E_h^Y$. Thus there exists $D_0 \in \QH^{8n-12}(Y)$ with $h \starz D_0 = v$. 
We have $h \star_\tau D_0 = v + t (h \star_1 D_0) + O(t^2)$ with $h \star_1 D_0 \in \QH^{8n - 8}(Y)$. Since $E_h^Y$ is surjective on degree $8n - 8$, there exists $D_1 \in \QH^{8n - 10}(Y)$ with $h \starz D_1 = - h \star_1 D_0$. Setting $D = D_0 + t D_1$ we get $h \star_\tau D = v + O(t^2)$. This altogether gives $C_0 = n \ \pt + h \star_\tau D + O(t^2)$ and 
$$n \ \pt = C - h \star_\tau D - t C_1 + O(t^2).$$ 
Computing the square gives
$$n^2 \ \pt \star_\tau \pt = n^2 \ \pt \starz \pt + t h \star_1 (h \starz D_0 \starz D_0) + t \cdot \im E_h^Y + O(t^2).$$ 
Now $h \starz D_0 \starz D_0 = v \starz D_0 = (C_0 - n \s_{-\theta}) \starz D_0$ and since $D_0 \in \QH^{8n-12}(Y) \subset \im E_h^Y$ we have $C_0 \starz D_0 = 0$ thus $h \starz D_0 \starz D_0 = - n \ \pt \starz D_0$. Let $A = \pt \starz D_0$. To compute $A$ we first compute $h \starz A = \pt \starz h \starz D_0 = \pt \starz (C_0 - n \ \pt ) = - n \ \pt \starz \pt$ (since $\pt \in \scal{C_0,\im E_h^Y}$ and $C_0$ is orthogonal to both terms we have $C_0 \starz \pt = 0$). The Product $\pt \starz \pt = q^2 \s$ with $\s = \s_{\a_1 + \a_2 + \a_3 - \theta}$ is easy to compute using for example the kernel and span technique presented in \cite{BKT}. In particular we get $h \starz A = - n \ q^2 \s$. Thus for $\s' = \s_{\a_1 + 2 \a_2 + \a_3 - \theta}$ and $K$ the unique element in $\Ker E_h^Y$ of degree $16 n - 22 = \deg(A)$ with coefficient 1 on $q^2 \s'$, since $h \starz \s' = \s$, we have 
$$A = \lambda K - n \ q^2 \s'$$
for some scalar $\lambda$. Let $\ell$ be the class of a line in $Y$. Note that the coefficient of $\ell$ in $D_0$ is $1-n$. To compute $\lambda$, first note that $\pt \starz \textrm{PD}(\s') = q^2 h$ (again obtained using the kernel and span technique). In particular, the only non vanishing invariant of the form $I_d(\pt,\textrm{PD}(\s'),-)$ is the invariant $I_2(\pt,\textrm{PD}(\s'),\ell) = 1$. This implies that the coefficient of $q^2 \s'$ in $A = \pt \starz D_0$ is the coefficient of $\ell$ in $D_0$ and has value $1-n$. We deduce that $\lambda = 1$ and 
$$h \starz D_0 \starz D_0 = n^2 \ q^2 \s' - n \ K.$$
We now compute $h \star_1 (h \starz D_0 \starz D_0)$. We actually only need to compute modulo $n^2$ so that we only to consider $h \star_1 K$. An easy degree argument gives that there are only 2 Schubert classes appearing in $K$ with non vanishing value under $h \star_1 - $: the classes $q^3 \s_{a_1 + \a_2 + \a_3 + \a_4 - \theta}$ and $q^3 \s_{\a_2 + \a_3 + \a_4 + \a_5 - \theta}$ (only the first one for $n = 4$). One then easily check using the kernel and span technique the following formula
$$h \star_1 K = q^3 \s_{\a_2 + \a_3 + \a_4 + \a_5} \textrm{ (for $n = 4$ we have $h \star_1 K = \s_{\a_1 + \a_2 + \a_3 + \a_4}$).}$$
Write $q^3 \gamma = h \star_1 K$. The class $\gamma$ is a Schubert class and is not contained in $\im E_h^Y$. Altogether working in the Schubert basis modulo $n^2$ and modulo $\im E_h^Y$ we get:
$$- n q^2 \gamma \equiv n^2 \pt \star_1 \pt \equiv 0 \textrm{ (mod. $n^2$ and $\im E_h^Y$)}.$$
A contradiction.
\end{proof}

\subsection{Cayley plane}

In this subsection we consider the variety $Y = F_4/P_4$ obtained as hyperplane section of the Cayley plane $\O\p^2 = E_6/P_6$. Since the arguments are very similar to the case $Y = \IG(2,2n)$ and since the computer program \cite{programme} gives a complete description of the small quantum cohomology we shall only state the results and give a sketch of proof.

\begin{lemma}
Let $Y$ be a linear section of $X = E_6/P_6$ of codimension $1$. We have $\Ker E_h^Y = R(Y) = \scal{K_8}$ for some element $K_8 \in \QH^8(Y)$. 
\end{lemma}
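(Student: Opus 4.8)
The plan is to argue exactly as in the lemma just proved for $Y=\IG(2,2n)$, transporting every computation to the Cayley plane $X=\O\p^2=E_6/P_6$ through Corollary \ref{coro-rest} and using that $\QH(E_6/P_6)$, as well as $\QH(F_4/P_4)$, is explicitly known (see \cite{cmp1}, \cite{adjoint}; the structure constants are also produced by the program \cite{programme}). Here $\dim Y=15$ and $c_1(Y)=11$, so $2c_1(Y)=22>\dim Y$ and $Y$ is a general hyperplane section of $X$ of codimension $k=1$; hence Theorem \ref{thm-def-pos}, Proposition \ref{prop-def-pos}, Theorem \ref{thm-alg-qh} and Corollary \ref{coro-rest} all apply, and $Q_Y$ is positive definite.

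The first step is to determine $\Ker E_h^Y$. Following Proposition \ref{prop-loc-rad}, I would compute $E_h^X$ on the $\Z/2c_1(X)\Z=\Z/24\Z$-graded algebra $A(X)=\QH(X)_{q=1}$ (whose Betti numbers are $(1,1,1,1,2,2,2,2,3,2,2,2,2,1,1,1,1)$ in complex degrees $0,\dots,16$) via the quantum Chevalley formula, and likewise $E_{h^{k+1}}^X=E_{h^2}^X$ on the piece $A_{2c_1(Y)-2}(X)$; a case check shows these maps have maximal rank in every degree except for the one-dimensional obstruction coming from the top of the cohomology of $X$. Pushing this through the two squares of Corollary \ref{coro-rest} — the isomorphisms $J=j^*\oplus j_*^{-1}$ in the ordinary degrees, and the exceptional square at degree $2c_1(Y)-2=20$, where $E_h^Y$ is matched with $E_{h^2}^X$ — together with the classical Hard Lefschetz theorem on $Y$ in the remaining low degrees (where $E_h^Y$ is plain cup product by $h$), shows that $\Ker E_h^Y$ is one-dimensional apart from the single non-nilpotent vector $\lambda\cdot 1+v$ ($\lambda\neq0$, $v\in\im E_h^Y$) in degrees divisible by $2c_1(Y)$ which, as in the proof of Proposition \ref{prop-loc-rad}, occurs for every variety on that list. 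Let $K_8$ span the remaining (nilpotent) part of the kernel; it lies in degree $8$ of $A(Y)$, and its homogeneous avatar in $\QH(Y)$ involves the class of a point.

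For the identification of the radical, Theorem \ref{thm-def-pos} and Proposition \ref{prop-def-pos} give $Q_Y$ positive definite, and Corollary \ref{coro-rest} transports from $X$ the splitting $A_0=(A_0\cap\Ker E_h)\oplus(A_0\cap\im E_h)$, so Theorem \ref{thm-alg-qh}(3) gives $R(Y)\subset\Ker E_h^Y$ and $\QH(Y)=\im E_h^Y\oplus\Ker E_h^Y$. Then $K_8$ is nilpotent: $K_8^2$ lies in degree $16$ of $A(Y)$, a piece onto which $E_h^Y$ (there just $h\cup-$) surjects by Hard Lefschetz on $Y$, so $K_8^2\in\im E_h^Y$; since also $K_8^2\in R(Y)\subset\Ker E_h^Y$ and $\im E_h^Y\cap\Ker E_h^Y=0$, we get $K_8^2=0$. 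Hence $\scal{K_8}$ is a square-zero ideal and $\scal{K_8}\subset R(Y)$, while $R(Y)\subset\Ker E_h^Y=\scal{K_8}$ once the non-nilpotent component — which cannot lie in a radical — is set aside; this yields $R(Y)=\scal{K_8}$.

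The real obstacle is not conceptual but computational: one has to carry out the rank analysis of $E_h^{E_6/P_6}$ (and of $E_{h^2}^{E_6/P_6}$ in the exceptional degree) Schubert-class by Schubert-class, and then follow it faithfully through Corollary \ref{coro-rest}, being careful about the shift between the $\Z/24\Z$-grading on $A(X)$ and the $\Z/22\Z$-grading on $A(Y)$. This is precisely the verification the paper entrusts to \cite{programme}; structurally the argument is the same as in the $\IG(2,2n)$ case treated above.
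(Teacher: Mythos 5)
Your route differs from the paper's: the paper's one‑line proof works directly on $Y = F_4/P_4$, a coadjoint variety, using the short‑root parametrisation of Schubert classes and the explicit quantum Chevalley formula $h\starz\s_\a = \sum\scal{\a_i^\vee,\a}\,\eta(s_{\a_i}(\a))$ from \cite{adjoint}, whereas you transport the rank analysis of $E_h$ from $X=E_6/P_6$ through Corollary \ref{coro-rest}, as in the proof for $\IG(2,2n)$. That is a legitimate alternative route, and the $K_8^2=0$ step — $K_8^2$ lies in $A_{16}(Y)\subset\im E_h^Y$, is also in the ideal $\Ker E_h^Y$, and $\im E_h^Y\cap\Ker E_h^Y=0$ — is the same as the paper's (where ``$A_5(Y)$'' in the printed proof should evidently read $A_{16}(Y)$).

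But there is a genuine gap, and it sits precisely on the first equality $\Ker E_h^Y=R(Y)$. You assert, by importing the argument of Proposition \ref{prop-loc-rad}, that $\Ker E_h^Y$ contains a non‑nilpotent vector $\lambda\cdot 1+v$ with $\lambda\neq 0$ in $A_0(Y)$, and then conclude $R(Y)=\scal{K_8}$ by ``setting it aside.'' If such a non‑nilpotent vector existed in $\Ker E_h^Y$, you would have $R(Y)\subsetneq\Ker E_h^Y$, which is the negation of the statement you are trying to prove; your argument thus contradicts its own conclusion. What the lemma actually claims is that for $Y=F_4/P_4$ there is \emph{no} such non‑nilpotent vector: a kernel element in $A_0(Y)$ must exist for dimensional reasons (here $\dim A_0(Y)=3>2=\dim A_2(Y)$), but for $F_4/P_4$ it lies in the ideal generated by $K_8$, hence squares to zero, so $\scal{K_8}$ must be read as the ideal and $\Ker E_h^Y=R(Y)$ holds. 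This is precisely the feature that distinguishes $F_4/P_4$ from the $\IG(2,2n)$ analysis you are copying, where $K_{4n-2}=\lambda+v$ with $\lambda\neq 0$ really is non‑nilpotent; here one has to check — via the explicit short‑root description of $E_h^Y$, or equivalently via the rank analysis pushed through Corollary \ref{coro-rest} together with the value of the relevant degree‑one Gromov--Witten invariant — that the degree‑$0$ kernel vector has vanishing component on $1$, and you do not do this.

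Secondary remark: the lemma also includes the uniqueness of $K_8$ (that $\Ker E_h^Y$ meets $\QH^8(Y)$ in a line and meets no other degree not divisible by $2c_1(Y)$). You gesture at this through a rank count of $E_h^X$, but the passage through the two squares of Corollary \ref{coro-rest}, including the exceptional square matching $E_h^Y$ at degree $2c_1(Y)-2=20$ with $E_{h^{2}}^X$, is left entirely as ``a case check.'' That is acceptable in a sketch but is where the actual content of the computation lives; the paper's version delegates it to the explicit combinatorics of short roots on $Y$, which is arguably more direct since $\QH(F_4/P_4)$ is itself completely known.
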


\begin{proof}
Follows from the description of $E_h^Y$ using short roots and the fact that the element $K_8 \in \Ker E_h^Y \cap \QH^8(Y)$ satisfies $K_8^2 \in A_5(Y) \subset E_h^Y$ and $K_8^2 \in \Ker E_h^Y$ thus $K_8^2 = 0$ since $\im E_h^Y \cap \Ker E_h^Y = 0$.
\end{proof}

\begin{thm}
\label{thm-bqh2}
The big quantum cohomology $\BQH(Y)$ is semisimple.
\end{thm}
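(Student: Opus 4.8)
The plan is to transpose, essentially line for line, the argument proving Theorem \ref{thm-bqh1}, in the form closest to the case $n=3$ of $\IG(2,2n)$: by the Lemma preceding the statement, $R(Y)$ is one dimensional, spanned by a single class $K_8$ with $K_8\in\Ker E_h^Y$ and $K_8^2=0$, so no case distinction is needed. I would work throughout in the $\Z/2c_1(Y)\Z$-graded $\R$-algebra $A(Y)=\QH(Y)_{q=1}$, fix a deformation direction $\tau$ of small degree --- the point class $\tau=\pt=\s_{-\theta}$, as in the $n=3$ case of Theorem \ref{thm-bqh1} (a degree-$4$ Schubert class, as in the $n\ge4$ case, would be the alternative) --- and show that $\BQH_\tau(Y)$ (notation of Subsection \ref{sub-def}) has no nonzero square-zero element; as this rules out all nilpotents, $\BQH_\tau(Y)$ --- and therefore $\BQH(Y)$ --- is semisimple.

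So suppose $C\in\BQH_\tau(Y)$, $C\ne0$, $C\star_\tau C=0$. After dividing by a power of $t$ write $C=C_0+tC_1+O(t^2)$ with $C_0\ne0$; the leading term gives $C_0\starz C_0=0$, so $C_0$ is nilpotent in the Artinian ring $A(Y)$, hence $C_0\in R(Y)=\R K_8$, and after rescaling $C_0=K_8$. Since $K_8\in\Ker E_h^Y$, $h\star_\tau C$ has no $t^0$ term, and its $t^1$ term $h\starz C_1+h\star_1 K_8$ must be nilpotent; but it lies in a graded piece where $R(Y)$ (which sits in degree $8$) has no nonzero element, so $h\starz C_1+h\star_1 K_8=0$. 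Using surjectivity of $E_h^Y$ in that degree --- part of the explicit description of $h\starz-$ via short roots from \cite{adjoint} and \cite{programme} already used in Proposition \ref{prop-loc-rad} --- this determines $C_1$; moreover $C_1$ lies in a graded piece contained in $\im E_h^Y$ (by Hard Lefschetz, from the Betti numbers of $Y$). As $C_0\in\Ker E_h^Y$, $C_1\in\im E_h^Y$, and these are $Q_Y$-orthogonal complements by Theorem \ref{thm-alg-qh}(3), we get $C_0\starz C_1=0$.

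I would then use the explicit decomposition $K_8=\lambda\,\pt+v$ with $v\in\im E_h^Y$ and $\lambda\ne0$ (read off from \cite{programme}), pick $D_0$ with $h\starz D_0=v$, and correct it to $D=D_0+tD_1$ with $h\star_\tau D=v+O(t^2)$. Then $\lambda\,\pt=C-h\star_\tau D-tC_1+O(t^2)$, and squaring in $\star_\tau$ --- using $C\star_\tau C=0$, $h\star_\tau C=O(t^2)$ and $C_0\starz C_1=0$, exactly as in the $\IG$ case --- collapses to an identity
$$\lambda^2\,\pt\star_\tau\pt=\lambda^2\,\pt\starz\pt+t\cdot h\star_1\bigl(h\starz D_0\starz D_0\bigr)+t\cdot\im E_h^Y+O(t^2).$$
Because $\tau=\pt$, the operator $h\star_1-$ comes from $\pt\starz-$ by the formula of Subsection \ref{sub-def}, so the right hand side is explicit once the small quantum product of $Y$ and the class $D_0$ are known (one simplifies $h\starz D_0\starz D_0=v\starz D_0=(K_8-\lambda\pt)\starz D_0=-\lambda\,\pt\starz D_0$ since $D_0\in\im E_h^Y$, $K_8\in\Ker E_h^Y$, then evaluates $\pt\starz D_0$ by the quantum-to-classical and kernel/span techniques of \cite{BKT}, \cite{cmp1}, \cite{adjoint}). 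On the other side $\pt\star_1\pt=\sum_d q^d\sum_\a I_d(\pt,\pt,\pt,\s_{-\a})\,\s_\a$, and a dimension count leaves only two degrees $d$; the geometric vanishing ``no degree-$3$ rational curve through three general points of $Y$'' (the analogue of the fact used for $\IG(2,6)$, checked via \cite{programme} or \cite{adjoint}) removes one, leaving a single invariant $I_d(\pt,\pt,\pt,\pt)$. Comparing the $q^d$-coefficients --- which do not lie in $\im E_h^Y$ --- in the two expressions forces this Gromov--Witten invariant to equal a value that is not a non-negative integer, a contradiction. Hence $C=0$, and $\BQH(Y)$ is semisimple.

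The main obstacle is this final comparison: it needs the concrete small-quantum-cohomology bookkeeping for the $15$-dimensional $Y=F_4/P_4$ --- graded pieces, the radical generator $K_8$ and its decomposition $\lambda\pt+v$, the auxiliary $D_0$, and the products $\pt\starz\pt$, $\pt\starz D_0$, $h\star_1 K_8$ --- together with the enumerative vanishing for degree-$3$ curves through three general points. These are precisely what the program \cite{programme} supplies, which is why a sketch suffices; the formal skeleton is identical to the $\IG(2,2n)$ proof, simplified by the one-dimensionality of $R(Y)$.
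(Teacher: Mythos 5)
Your proof is correct and follows essentially the same route as the paper: it is the $n=3$ case of Theorem \ref{thm-bqh1} transposed verbatim to $Y = F_4/P_4$, with $\tau = \pt$, $C_0$ the (one-dimensional) radical generator, the decomposition $C_0 = \lambda\,\pt + v$ (the paper computes $\lambda = 3$ and $v = q\s_\beta - q\s_\alpha - 2\s_{-\theta}$), and the final contradiction $\lambda^2 I_4(\pt,\pt,\pt,\pt) = 6$, impossible for a non-negative integer. The only point you leave to generic references that the paper pins down explicitly is the enumerative vanishing $I_3(\pt,\pt,\pt,-) = 0$, which the paper takes from \cite[Proposition 2.13]{rational} rather than from \cite{programme} or \cite{adjoint}.
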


\begin{proof}
For classes $\s,\s' \in \HH(Y)$ we will write
$$\s \star_\tau \s' = \s \starz \s' + t (\s \star_1 \s') + O(t^2).$$
Let $(\a_1,\a_2,\a_3,\a_4)$ a system of simple roots of the root system of $F_4$ with $\a_3$ and $\a_4$ short. We have $\theta = \a_1 + 2 \a_2 + 3 \a_3 + 2 \a_4$ (recall that $\theta$ is the highest short root). 
Let $\tau = \pt$, we prove that $\BQH_\tau(Y)$ is semisimple. Let $C \in \BQH_\tau(Y)$ be nilpotent with $C \neq 0$ and $c \star_\tau C = 0$. We may assume that 
$$C = C_0 + t C_1 + O(t^2) \textrm{ with $C_0 \neq 0$}.$$
We have $0 = C \star_\tau C = C_0 \starz C_0 + O(t)$ thus $C_0 \in R(Y)$ and modulo rescaling we may assume $C_0 = \s_{-\theta} - q \s_{\a} + q \s_{\be} \in \QH^{30}(Y)$ with $\a = \a_1+\a_2+\a_3+\a_4$ and $\be = \a_2 + 2 \a_3 + \a_4$. We have:
$$h \star_\tau C = h \starz C_0 + t (h \star_1 C_0 + h \starz C_1) + O(t^2) = t (h \star_1 C_0 + h \starz C_1) + O(t^2).$$
In particular since $h \star_\tau C$ is nilpotent, we get that $h \star_1 C_0 + h \starz C_1$ is nilpotent. Since $h \star_1 C_0 = \in \QH^{60}(Y) = q^2 \QH^{16}(Y)$ and since there is no nilpotent element in degree 60 we get $h \star_1 C_0 + h \starz C_1 = 0$. This is possible since $E_h^Y$ is surjective on degree 60. The element $C_1$ is thus uniquely determined by $h \starz C_1 = - h \star_1 C_0$. 
Note that $C_1 \in \QH^{58}(Y) \subset \im E_h^Y$ and $C_0 \in \Ker E_h^Y$ thus $C_0 \starz C_1 = 0$.

We now remark the following equality $C_0 = 3 \s_{-\theta} + (q \s_{\be} - q \s_{\a} -2 \s_{-\theta})$ where the second term is in the image of $E_h^Y$. Thus there exists $D_0 \in \QH^{28}(Y)$ with $h \starz D_0 = q \s_{\be} - q \s_{\a} -2 \s_{-\theta}$. An easy computation gives 
$$D_0 = q \s_{\gamma} - 2 \s_{-\delta}$$
with $\gamma = \a_1 + \a_2 + 2 \a_3 + \a_4$ and $\delta = \a_1 + 2 \a_2 + 3 \a_3 + \a_4$. We have $h \star_\tau D_0 = (q \s_{\be} - q \s_{\a} -2 \s_{-\theta}) + t (h \star_1 D_0) + O(t^2)$ with $h \star_1 D_0 \in \QH^{58}(Y)$. Since $E_h^Y$ is surjective on degree 58, there exists $D_1 \in \QH^{56}(Y)$ with $h \starz D_1 = - h \star_1 D_0$. Setting $D = D_0 + t D_1$ we get $h \star_\tau D = q \s_{\be} - q \s_{\a} -2 \s_{-\theta} + O(t^2)$. This altogether gives $C_0 = 3 \pt + h \star_\tau D + O(t^2)$ and 
$$3 \pt = C - h \star_\tau D - t C_1 + O(t^2).$$ 
Computing the square gives as in the proof of Theorem \ref{thm-bqh1}
$$9 \pt \star_\tau \pt = \pt \starz \pt + t h \star_1 (h \starz D_0 \starz D_0) + t \cdot \im E_h^Y + O(t^2).$$ 
Since $D_0$ is explicitely given and since the endomorphism $h \star_1 -$ is understood using $\pt \starz -$ (see Subsection \ref{sub-def}) we get
$$9 \pt \star_1 \pt = 12 q^4 + 12 q^3 \s_{-\a} + 6 q^3 \s_{-\be} + \im E_h^Y = 6 q^4 + \im E_h^Y.$$

On the other hand we compute directly the product $\pt \star_\tau \pt$. We have
$$\pt \star_1 \pt = \sum_{d \geq 0} \sum_{\zeta \in R_s} q^d I_d(\pt,\pt,\pt,\s_{-\zeta}) \s_{\zeta}.$$ 
The invariant $I_d(\pt,\pt,\pt,\s_{-\zeta})$ vanishes unless $3 \deg \pt + \deg \s_{-\zeta} = 2 \dim Y + 2d c_1(Y) + 2$ thus $\deg \s_{-\zeta} = 22 d - 58$. Since $\deg \s_{-\zeta}$ is even and contained in $[0,30]$, the invariant $I_d(\pt,\pt,\pt,\s_{-\zeta})$ vanishes unless $d = 3$ and $\deg \s_{-\zeta} = 8$ or $d = 4$ and $\deg \s_{-\zeta} = 30$. But one it was proved in \cite[Proposition 2.13]{rational} that there is no degree 3 curve in $Y$ passing through 3 points in general position. Thus $I_3(\pt,\pt,\pt,-) = 0$. The only non vanishing invariant is thus $I_4(\pt,\pt,\pt,\pt)$ and we have 
$$9 \pt \star_1 \pt = 9 I_4(\pt,\pt,\pt,\pt) q^4 = 6 q^4 + \im E_h^Y.$$ 
Since $q^4 \not \in \im E_h^Y$ this implies $9 I_4(\pt,\pt,\pt,\pt) = 6$ which is impossible since $I_4(\pt,\pt,\pt,\pt) \in \Z_{\geq0}$.
\end{proof}

\addtocontents{toc}{\protect\setcounter{tocdepth}{2}}
\providecommand{\bysame}{\leavevmode\hbox to3em{\hrulefill}\thinspace}
\providecommand{\MR}{\relax\ifhmode\unskip\space\fi MR }
\providecommand{\MRhref}[2]{%
  \href{http://www.ams.org/mathscinet-getitem?mr=#1}{#2}
}
\providecommand{\href}[2]{#2}

\end{document}